%
%
%
%
%
%
%


\documentclass[12pt,reqno,a4paper]{amsart}            


\voffset+5mm
\usepackage[totalwidth=450pt, totalheight=650pt]{geometry}




\usepackage{ifthen}       
\usepackage{scrtime}     
%

\newcommand{\olafsMacrosIncluded}{} 

\usepackage{scrpage2}    
\pagestyle{scrheadings}
\usepackage{scrtime}     


\ifthenelse{\isundefined \OlafsVersion}
{ 
}
{ 
}

\ifthenelse{\isundefined \draft}
{ 
  \lohead{\textsf{\shorttitle}}
  \rehead{\textsf{\authors}}
  \automark[subsection]{section}

  \newcommand{\look}[1]{}%
  \newcommand{\lookO}[1]{}%
  \newcommand{\lookR}[1]{}%
}
{ 
  \rehead{\texttt{\jobname.tex}, \today, \thistime}
  \lohead{\texttt{\jobname.tex}, \today, \thistime}


  \newcommand{\marker}[1]{\fbox{\rule{0pt}{0.1ex}\textbf{#1}}}
  \newcommand{\markerO}{\marker{Olaf}}
  \newcommand{\markerR}{\marker{Ralf}}

  \newcounter{lookcounter}
  \setcounter{lookcounter}{0}
  \newcommand{\look}[1]
             {
               \stepcounter{lookcounter} 
               \marker{$\diamond$}
               \footnote[\arabic{lookcounter}]{\marker{$\diamond$} #1}
             }
  \newcommand{\lookO}[1]
             {
               \stepcounter{lookcounter} 
               \markerO
               \footnote[\arabic{lookcounter}]{\markerO #1}
             }
  \newcommand{\lookR}[1]
             {
               \stepcounter{lookcounter} 
               \markerR
               \footnote[\arabic{lookcounter}]{\markerR #1}
             }

  \ifthenelse{\isundefined \OlafsVersion} 
  {\usepackage[notref,notcite]{showkeys}}              
  {\usepackage{/home/post/Aktuell/LaTeX/my-showkeys}}  
}

\newcommand{\comment}[1]{}   

\usepackage{graphicx}                


\usepackage{amsmath, amssymb}
\usepackage{amsthm}

\usepackage{amscd}


\usepackage{accents}     
\usepackage{yhmath}     

\usepackage{mathtools}  

\usepackage{bbm}         

\usepackage{mathrsfs}    
\renewcommand \mathcal \mathscr

\numberwithin{equation}{section}


\newcounter{myenumi}


\newcommand{\itemref}[1]{(\ref{#1})}

\newcommand{\myfont}{\sffamily}

\newtheoremstyle{mythmstyle}
  {\topsep}
  {\topsep}
  {\itshape}
  {}
  {\bfseries \myfont}
  {.}
  {.5em}
  {}

\newtheoremstyle{mydefstyle}
  {\topsep}
  {\topsep}
  {\normalfont}
  {}
  {\bfseries \myfont}
  {.}
  {.5em}
  {}

\swapnumbers                    

\theoremstyle{mythmstyle}       

\newtheorem{theorem}{Theorem}[section]
\newtheorem{maintheorem}[theorem]{Main Theorem}
\newtheorem{proposition}[theorem]{Proposition}
\newtheorem{lemma}[theorem]{Lemma}
\newtheorem{corollary}[theorem]{Corollary}

\newcounter{intro}


\newtheorem{conjecture}[theorem]{Conjecture}

\theoremstyle{mydefstyle}        
\newtheorem{definition}[theorem]{Definition}
\newtheorem{assumption}[theorem]{Assumption}
\newtheorem{example}[theorem]{Example}
\newtheorem{examples}[theorem]{Examples}

\newtheorem{remark}[theorem]{Remark}
\newtheorem{remarks}[theorem]{Remarks}
\newtheorem*{remark*}{Remark}
\newtheorem{notation}[theorem]{Notation}

\expandafter\let\expandafter\oldproof\csname\string\proof\endcsname
\let\oldendproof\endproof
\renewenvironment{proof}[1][\bfseries\myfont\proofname]{%
  \oldproof[\bfseries \myfont #1]%
}{\oldendproof}


\makeatletter
\renewcommand\section{\@startsection{section}{1}%
  \z@{.7\linespacing\@plus\linespacing}{.5\linespacing}%
  {\Large\myfont\bfseries}}
\renewcommand\subsection{\@startsection{subsection}{2}%
  \z@{-.5\linespacing\@plus-.7\linespacing}{.5\linespacing}%
  {\large\myfont\bfseries}}
\renewcommand\subsubsection{\@startsection{subsubsection}{3}%
  \z@{.5\linespacing\@plus.7\linespacing}{-.5em}%
  {\myfont\bfseries}}
\renewenvironment{abstract}{%
  \ifx\maketitle\relax
    \ClassWarning{\@classname}{Abstract should precede
      \protect\maketitle\space in AMS document classes; reported}%
  \fi
  \global\setbox\abstractbox=\vtop \bgroup
    \normalfont\Small
    \list{}{\labelwidth\z@
      \leftmargin3pc \rightmargin\leftmargin
      \listparindent\normalparindent \itemindent\z@
      \parsep\z@ \@plus\p@
      
    }%
    \item[\hskip\labelsep
      \myfont\bfseries
    \abstractname.]%
}{%
  \endlist\egroup
  \ifx\@setabstract\relax \@setabstracta \fi
}
\renewcommand\contentsnamefont{\myfont\bfseries}
\renewcommand\@starttoc[2]{\begingroup
  \setTrue{#1}%
  \par\removelastskip\vskip\z@skip
  \@startsection{}\@M\z@{\linespacing\@plus\linespacing}%
    {.5\linespacing}{
      \contentsnamefont}{#2}%
  \ifx\contentsname#2%
  \else \addcontentsline{toc}{section}{#2}\fi
  \makeatletter
  \@input{\jobname.#1}%
  \if@filesw
    \@xp\newwrite\csname tf@#1\endcsname
    \immediate\@xp\openout\csname tf@#1\endcsname \jobname.#1\relax
  \fi
  \global\@nobreakfalse \endgroup
  \addvspace{32\p@\@plus14\p@}%
  \let\tableofcontents\relax
}
\renewcommand\@settitle{\begin{center}%
  \baselineskip14\p@\relax
    \LARGE
    \bfseries
    \myfont
  \@title
  \end{center}%
}
\renewcommand\@setauthors{%
  \begingroup
  \def\thanks{\protect\thanks@warning}%
  \trivlist
  \centering\footnotesize \@topsep30\p@\relax
  \advance\@topsep by -\baselineskip
  \item\relax
  \author@andify\authors
  \def\\{\protect\linebreak}%
  \large
  \myfont\bfseries\authors
  \ifx\@empty\contribs
  \else
    ,\penalty-3 \space \@setcontribs
    \@closetoccontribs
  \fi
  \endtrivlist
  \normalfont\myfont\@setaddresses
  \endgroup
}
\renewcommand\@setaddresses{\par
  \nobreak \begingroup
\footnotesize
  \def\author##1{\nobreak\addvspace\bigskipamount}%
  \def\\{\unskip, \ignorespaces}%
  \interlinepenalty\@M
  \def\address##1##2{\begingroup
    \par\addvspace\bigskipamount\indent
    \@ifnotempty{##1}{(\ignorespaces##1\unskip) }%
    {
      \ignorespaces##2}\par\endgroup}%
  \def\curraddr##1##2{\begingroup
    \@ifnotempty{##2}{\nobreak\indent\curraddrname
      \@ifnotempty{##1}{, \ignorespaces##1\unskip}\/:\space
      ##2\par}\endgroup}%
  \def\email##1##2{\begingroup
    \@ifnotempty{##2}{\nobreak\indent\emailaddrname
      \@ifnotempty{##1}{, \ignorespaces##1\unskip}\/:\space
      \ttfamily##2\par}\endgroup}%
  \def\urladdr##1##2{\begingroup
    \def~{\char`\~}%
    \@ifnotempty{##2}{\nobreak\indent\urladdrname
      \@ifnotempty{##1}{, \ignorespaces##1\unskip}\/:\space
      \ttfamily##2\par}\endgroup}%
  \addresses
  \endgroup
}
\renewcommand\enddoc@text{\ifx\@empty\@translators \else\@settranslators\fi
}
\renewcommand\@secnumfont{\myfont\bfseries} 

\renewcommand\maketitle{\par
  \@topnum\z@ 
  \@setcopyright
  \thispagestyle{firstpage}
  \ifx\@empty\shortauthors \let\shortauthors\shorttitle
  \else \andify\shortauthors
  \fi
  \@maketitle@hook
  \begingroup
  \@maketitle
  \toks@\@xp{\shortauthors}\@temptokena\@xp{\shorttitle}%
  \toks4{\def\\{ \ignorespaces}}
  \edef\@tempa{%
    \@nx\markboth{\the\toks4
      \@nx\MakeUppercase{\the\toks@}}{\the\@temptokena}}%
  \@tempa
  \endgroup
  \c@footnote\z@
  \@cleartopmattertags
}
\makeatother

%
\newcommand{\Sec}[1]{Section~\ref{sec:#1}}

\newcommand{\Subsec}[1]{Subsection~\ref{ssec:#1}}




\newcommand{\Thm}[1]{Theorem~\ref{thm:#1}}

\newcommand{\Thmenum}[2]{Theorem~\ref{thm:#1}~(\ref{#2})}

\newcommand{\Mthm}[1]{Main Theorem~\ref{mthm:#1}}
\newcommand{\Mthms}[2]{Main Theorems~\ref{mthm:#1} and~\ref{mthm:#2}}

\newcommand{\Exenum}[2]{Example~\ref{ex:#1}~(\ref{#2})}

\newcommand{\Lem}[1]{Lemma~\ref{lem:#1}}

\newcommand{\Cor}[1]{Corollary~\ref{cor:#1}}

\newcommand{\Prp}[1]{Proposition~\ref{prp:#1}}

\newcommand{\Rem}[1]{Remark~\ref{rem:#1}}

\newcommand{\Def}[1]{Definition~\ref{def:#1}}

\newcommand{\Defenum}[2]{Definition~\ref{def:#1}~(\ref{#2})}

\newcommand{\Ass}[1]{Assumption~\ref{ass:#1}}

%
%

\newcommand{\abs}[2][{}]{\lvert{#2}\rvert_{{#1}}}    
\newcommand{\abssqr}[2][{}]{\lvert{#2}\rvert^2_{#1}} 
\newcommand{\Bigabs}[2][{}]{\Bigl\lvert{#2}\Bigr\rvert_{#1}}     

\newcommand{\normsymb}{\|}

\newcommand{\norm}[2][{}]{\normsymb{#2}\normsymb_{{#1}}}    
\newcommand{\normsqr}[2][{}]{\normsymb{#2}\normsymb^2_{#1}} 




\newcommand{\iprod}[3][{}]{\langle{#2},{#3}\rangle_{#1}}  

\newcommand{\set}[2]{\{ \, #1 \, | \, #2 \, \} }      
\newcommand{\bigset}[2]{\bigl\{ \, #1 \, \bigl|\bigr. \, #2 \, \bigr\} }
\newcommand{\Bigset}[2]{\Bigl\{ \, #1 \, \Bigl|\Bigr. \, #2 \, \Bigr\} }

\DeclareMathOperator*{\bigdcup}{\mathaccent\cdot{\bigcup}}

\newcommand{\map}[3]{ #1 \colon #2 \longrightarrow #3}    

\newcommand{\bd}  {\partial}          
\newcommand{\clo}[2][]{\overline{{#2}}^{#1}} 

\newcommand{\restr}[1]{{\restriction}_{#1}} 

%

\def\XXint#1#2#3{{\setbox0=\hbox{$#1{#2#3}{\int}$}
     \vcenter{\hbox{$#2#3$}}\kern-.5\wd0}}
\def\XXsum#1#2#3{{\setbox0=\hbox{$#1{#2#3}{\int}$}
     \vcenter{\hbox{$#2#3$}}\kern-.60\wd0}}




\DeclareMathOperator{\dd}    {d\!}

\DeclareMathOperator{\dom}    {dom}



\DeclareMathOperator{\supp}   {supp}
\DeclareMathOperator{\vol}    {vol}





\newcommand{\eps}{\varepsilon} 
\renewcommand{\phi}{\varphi}   
\renewcommand{\rho}{\varrho}   



\newcommand{\R}{\mathbb{R}} 
\newcommand{\N}{\mathbb{N}} 


\newcommand{\e}{\mathrm e}  

\newcommand{\wt}{\widetilde}           
\newcommand {\qf}[1]{\mathcal{#1}}    



\newcommand{\Sobsymb} {\mathsf H} 

\newcommand{\Contsymb} {\mathsf C}     
\newcommand{\Lsymb}    {\mathsf L}     



\newcommand{\Sobspace}[1][1]{\Sobsymb^{#1}} 

\newcommand{\Contspace}[1][{}]{\Contsymb^{#1}}     
\newcommand{\Lpspace}[1][p]    {\Lsymb_{#1}}     
\newcommand{\Lsqrspace}    {\Lpspace[2]}     





\newcommand{\Cont}[2][{}]{\Contspace[#1]({#2})}

\newcommand{\Contc}[2][{}]{\Contspace[#1]_{\mathrm c}({#2})}




\newcommand{\Lsqr}[2][{}]{\Lsqrspace^{#1}({#2})} 
 %







\newcommand{\Neu}{\mathrm {Neu}}              
\newcommand{\Dir}{\mathrm {Dir}}              
\newcommand{\laplacian}{\Delta}
\newcommand{\laplacianD}{\laplacian^{\!\Dir}} 
\newcommand{\laplacianN}{\laplacian^{\!\Neu}} 





%
%

\usepackage[toc,page]{appendix}
%
%

\newcommand {\Lip}{\mathrm{Lip}}  
\newcommand{\LipCont}[1]{\Contspace_\Lip({#1})}
\newcommand {\Proba}{\mathbb P}  
\newcommand{\normvec}{\mathrm{n}}  
\newcommand{\normder}{\partial_\normvec}  
\renewcommand{\bar}[1]{\tilde{#1}}  
\newcommand{\dbar}[1]{\hat{#1}}  
\newcommand{\oS}[1]{#1{}'}   
%
%

%
%

\usepackage[utf8x]{inputenc}     
\usepackage[british]{babel}      

\usepackage{amsmath}  
\usepackage{amsfonts} 
\usepackage{amssymb}  
\usepackage{amsthm}   

\usepackage{graphicx} 

\usepackage{comment} 

\usepackage{ae}      




\ifthenelse{\isundefined \olafsMacrosIncluded}
  {
    \newcommand{\N}{\mathbb{N}}

    \newcommand{\R}{\mathbb{R}}
    
  }
  {
  }


\renewcommand{\le}{\leqslant}
\renewcommand{\ge}{\geqslant}


\ifthenelse{\isundefined \olafsMacrosIncluded}
  {
    \DeclareMathOperator{\vol}{vol}  
    \DeclareMathOperator{\dom}{dom}
  }
  {
  } 


\newcommand{\ra}{\rightarrow}

\usepackage{enumerate}    

\newenvironment{enumiii}{\begin{enumerate}[(i)]}{\end{enumerate}}
\newenvironment{enumABC}{\begin{enumerate}[(A)]}{\end{enumerate}}

\hyphenation{Lip-schitz}

\ifthenelse{\isundefined \olafsMacrosIncluded}
  {
    \newtheoremstyle{example}
    {\topsep}{\topsep} 
    {\itshape}
    {}
    {\bfseries}
    {.}
    {\newline}
    {\thmname{#1}\thmnumber{ #2}\thmnote{ #3}}

    \theoremstyle{example} 
    
    \newtheorem{theorem}{Theorem}[section] 
    \newtheorem{proposition}[theorem]{Proposition} 
    \newtheorem{lemma}[theorem]{Lemma}
    \newtheorem{corollary}[theorem]{Corollary}
    \newtheorem{definition}[theorem]{Definition}
    
    \newtheorem{examples}[theorem]{Examples}
    \newtheorem{maintheorem}[theorem]{Main Theorem}
    \newtheorem{assumption}[theorem]{Assumption}

    \theoremstyle{remark} \newtheorem{remark}[theorem]{Remark}

    \newtheorem{example}[theorem]{Example}
  }
  {
  }

%
%

%
\begin{document}

\nocite{Sturm06a}

\title[Locality of the heat kernel]
{Locality of the heat kernel on metric measure spaces}

\author{Olaf Post}
\address{Fachbereich 4 -- Mathematik,
  Universit\"at Trier,
  54286 Trier, Germany}
\email{olaf.post@uni-trier.de}

\author{Ralf R\"uckriemen}
\address{Fachbereich 4 -- Mathematik,
  Universit\"at Trier,
  54286 Trier, Germany}
\email{rueckriemen@uni-trier.de}

\ifthenelse{\isundefined \draft}
           {\date{\today}}  
           {\date{\today, \thistime,  \emph{File:} \texttt{\jobname.tex}}} 

\begin{abstract}
We will discuss what it means for a general heat kernel on a metric measure space to be local. We show that the Wiener measure associated to Brownian motion is local. Next we show that locality of the Wiener measure plus a suitable decay bound of the heat kernel implies locality of the heat kernel. 
We define a class of metric spaces we call manifold-like that satisfy the prerequisites for these theorems. This class
  includes Riemannian manifolds, metric graphs, products and some
  quotients of these as well as a number of more singular
  spaces. There exists a natural Dirichlet form based on the Laplacian
  on manifold-like spaces and we show that the associated Wiener measure and heat kernel are both local. These results unify and generalise facts known for manifolds
  and metric graphs. They provide a useful tool for computing heat
  kernel asymptotics for a large class of metric spaces. As an
  application we compute the heat kernel asymptotics for two identical
  particles living on a metric graph.
\end{abstract}

\maketitle 

MSC subject classification: 58J35 Heat and other parabolic equation methods, 47D07 Markov semigroups and applications to diffusion processes, 60J60 60J65 Diffusion processes and Brownian motion, 54E50 Complete metric spaces

\thanks{We would like to thank Batu G\"uneysu for a fruitful
  discussion and helpful hints on a preliminary version of our
  manuscript.}

\tableofcontents

%
\section{Introduction and motivation}
\label{sec:intro}
%
\nocite{AGS13} 

This article is about the well-known statement: `The heat kernel is
local'. We will discuss what this means exactly and when it
holds. This statement first appeared around 1950, for example in the
works of~\cite{MinakshisundaramPleijel49}. They studied the short time
expansion of the heat kernel of the Laplace-Beltrami operator on
Riemannian manifolds. In order to go from boundaryless manifolds to
manifolds with boundary one uses the following trick. A small
neighbourhood of a point on the boundary looks like a half space where
the heat kernel is known explicitly, a small neighbourhood of a point
away from the boundary looks like a neighbourhood of a point in a
boundaryless manifold, where the expansion of the heat kernel is also
known. One can get an expansion of the heat kernel of a manifold with
boundary by combining these two situations. The fact that this
strategy works is the essence of what is meant by saying the heat
kernel is local. For that reason, the same fact is also
referred to as `the heat kernel does not feel the boundary', see \cite{Kac51}.

A first naive understanding of `the heat kernel is local' would be the
following. Given a space $M$ with some operator $D$ on it and some
nice set $U \subset M$, one could think that locality means the heat
kernel on $U$ is determined by knowing $U$ and $D|_U$. Unfortunately
this is not true. Consider a simple example, the unit interval with
the standard Laplace operator, once with Neumann boundary conditions
at both ends $\laplacianN$ and once with Dirichlet boundary conditions
$\laplacianD$. Let $U = (a, 1-a)$ for some $a>0$. One can compute both
heat kernels $p_{\laplacianN}$ and $p_{\laplacianD}$ explicitly and
$p_{\laplacianN} \restr U \neq p_{\laplacianD} \restr U$. For large
times one should not expect this to be true anyway, the Neumann Laplace
operator preserves energy, whereas in the Dirichlet case all heat
eventually leaks out of the system.

What is true however, is that the difference between the two heat
kernels is quite small for small times $t$, namely it can be bounded
by $\e^{-\eps/t}$ for some explicit constant $\eps>0$ depending
on $a$. In particular, such a bound implies that the asymptotic
expansion for $t \to 0$ of these two heat kernels is equal on
$U$. Hence, getting a bound of this form in a quite general setting is
our primary goal.

Locality in this sense holds for Riemannian manifolds with boundary
and the Laplace-Beltrami operator. It still holds if we add lower
order terms to the operator. It is also known for quantum graphs with
the Laplacian and some suitable boundary conditions at the
vertices. Some works on orbifolds also make indirect use of it. 

A slightly weaker notion of locality is proved in \cite{Hsu95} for Riemannian manifolds and in \cite{Gueneysu17} for general metric spaces. Their definition of the principle of not feeling the boundary only makes a statement about the behaviour in the $t \ra 0^+$ limit. Hence our locality definition implies theirs but not vice versa.

While in the original works from the 50s the locality statement is
first rigorously proven before it is used, in some of the more modern
works it seems to be the other way round. Some asymptotic expansion of
the heat kernel is derived, the derivation clearly makes use of the
some version of the locality of the heat kernel. Sometimes this
locality is just referred to as a well known fact, sometimes not even
that but little consideration is given to the question whether it
actually holds in the given setting and how this might be proven. Here
we are going to show that locality does indeed hold in a very general
setting.

Our strategy is based on the Wiener measure. One can represent the heat
kernel at time $t$ between two points $x$ and $y$ as the Wiener
measure of the set of paths from $x$ to $y$ in time $t$. In the unit
interval example the Neumann and the Dirichlet Laplacian give rise to
Wiener measures $W^\Neu$ and $W^\Dir$. These Wiener measures satisfy
locality in the literal sense of the word, that is $W^\Neu \restr U =
W^\Dir \restr U$, where the restriction means the restriction to paths
that stay inside of $U$ during the entire time interval $[0,t]$. Our
first \Mthm{locality_wiener_measure} states that locality of the
Wiener measure holds for fairly general metric measure spaces.

Our next \Mthm{locality_heat_kernel} states that if the heat kernels
satisfy a suitable decay bound (see \Def{heat_kernel_decay}), then locality of the Wiener measure
implies locality of the heat kernel. Decay bounds for general heat kernels on metric spaces are an area of active research, see for example \cite{Sturm95, GrigoryanTelcs12, GrigoryanKajino17}.

To show that this heat kernel decay and hence locality is satisfied on
a wide class of examples, we define a class of metric spaces we call
manifold-like. They are an extension of spaces that satisfy the
measure contraction property introduced by Sturm in~\cite{Sturm98}
and~\cite{Sturm06b}. This class includes Riemannian manifolds with our
without boundary, quantum graphs, products and certain quotients of
these and a number of other spaces. On these spaces there exists a
natural Dirichlet form, and the associated operator corresponds to the
Laplace operator in the case of Riemannian manifolds. The associated
heat kernel satisfies a bound of the form $p_t(x,y) \le
\e^{-d(x,y)^2/(c\cdot t)}$, that is the heat kernel decays
exponentially fast with distance between the points. Our third
\Mthm{manifold_like_spaces} shows that locality of the heat kernel
holds for manifold-like spaces.

This paper is organised as follows. \Sec{gen.heat.kernels} defines
heat kernels and the Wiener measure in a metric space setting and
collects some facts on how the operator, the Dirichlet form, the
semigroup, the heat kernel and the Wiener measure all relate to each
other and imply each others existence. \Sec{locality_wiener_measure} contains our
\Mthms{locality_wiener_measure}{locality_heat_kernel},
locality of the Wiener measure and locality of the heat kernel on metric measure spaces.
In \Sec{the.space} we define
our class of manifold-like metric spaces and in \Sec{dir.forms} we
define Laplace-type operators through Dirichlet forms on them. We then show that these operators satisfy the conditions for locality of the Wiener measure and the heat kernel in \Mthm{manifold_like_spaces}.
Finally, \Sec{example} contains an example computation of the heat
kernel asymptotics of a two particle system on a metric graph through
decomposing the state space into simple pieces.

%
\section{General heat kernels}
\label{sec:gen.heat.kernels}
%



A heat kernel can arise in a variety of ways, one can define a heat
kernel directly, it can arise as the fundamental solution of a
suitable operator, which itself can be defined through a Dirichlet
form, it can be seen as the transition function of a Markov process or
as the integral kernel of a semigroup. In full generality not all
these interpretations might exist but we are interested in situations
where they are. We will first collect some well-known facts that hold
in great generality.

To introduce a heat kernel and a Markov process we need an underlying
space $M$ with Hilbert space $\Lsqr M$. Throughout the paper we let
\emph{$(M,d)$ be a \textbf{compact} metric space} ($M$ is then automatically
complete and separable).  The metric is a map $\map d {M \times
  M}{[0,\infty]}$, in particular we do not assume that $M$ is
connected. Let $\mu$ be a Radon measure on $\mathcal{B}(M)$, the Borel
sets on $M$. We assume $\mu$ has full support on $M$. The triple
$(M,d,\mu)$ is called a \emph{metric measure space}.

A semigroup $P_t$ is called \emph{Markovian} if for any $f$ satisfying
$0 \le f \le 1$ $\mu$-almost everywhere we also have $0 \le P_tf \le
1$ $\mu$-almost everywhere. It is \emph{contracting} if $\norm {P_t}
\le 1$.

A closed, densely defined linear operator $D$ is called a
\emph{Dirichlet operator} if for all $f \in \dom D$ we have
$\iprod{Df}{\max \{f-1, 0\}} \ge 0$.

As usual, we use the same symbol for a \emph{symmetric form} $\qf E$
and the associated \emph{quadratic form} given by $\qf E(f):= \qf
E(f,f)$.  We assume that $\qf E$ is non-negative, i.e., $\qf E(f) \ge
0$ for all $f \in \dom \qf E$.  Such a form is \emph{closed} if $\dom
\qf E$ equipped with the norm given by $\normsqr[\qf E] f :=
\normsqr[\Lsqr M] f + \qf E(f)$ is complete, i.e., itself a Hilbert
space.  A closed symmetric form $ \qf E$ is a \emph{Dirichlet form} if
the unit contraction operates on it, i.e., if for $f \in \dom \qf E$,
then $f^\# \in \dom \qf E$ and $\qf E(f) \ge \qf E(f^\#)$, where
$f^\#:= \min( \max (f, 1), 0)$ denotes the \emph{unit contraction
  operator}.

A Dirichlet form is called \emph{regular} if $\Cont M \cap \dom \qf E$
is dense in $\Cont M$ (with respect to the supremum norm) and also
dense in $\dom \qf E$ with respect to $\norm[\qf E]\cdot$.  A
Dirichlet form is called \emph{local} if $\qf E(f,g)=0$ whenever $f,g
\in \dom \qf E$ have disjoint support.  If $\qf E(f,g)=0$ whenever
$f$ is constant on a neighbourhood of the support $\supp g$ of $g$,
then $\qf E$ is called \emph{strongly local}.

The following is well known, see for example~\cite{Fukushima80},
\cite{Kato80},~\cite{Davies80},~\cite{FOT11},~\cite{MaRoeckner92}
or~\cite{BouleauHirsch91}.
 
\begin{theorem}
  \label{thm:semigroup_form_operator}
  Let $H$ be a Hilbert space. Then the existence of any of the
  following implies the existence of the others.
  \begin{itemize}
  \item a linear non-negative self-adjoint and densely defined
    operator $D$\item a closed symmetric form $\qf
    E$ 
  \item a strongly continuous contraction semigroup $\{P_t\}_{t \ge
      0}$ of self-adjoint operators
  \end{itemize}
  If $H=\Lsqr M$ we can additionally say the following.  The semigroup
  satisfies the Markov property if and only if the operator is a
  Dirichlet operator if and only if the symmetric form is a Dirichlet
  form.
  
\end{theorem}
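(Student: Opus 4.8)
The plan is to separate the two assertions: first the three-way equivalence between operator, form and semigroup, and then the characterisation of the Markov/Dirichlet property, using the spectral theorem as the central tool throughout. For the first assertion I would establish mutually inverse correspondences. Starting from a non-negative self-adjoint operator $D$, the spectral theorem permits the functional calculus with the bounded function $\lambda \mapsto \e^{-t\lambda}$ on $\spec{D} \subseteq [0,\infty)$, producing $P_t := \e^{-tD}$. This family is self-adjoint by construction, satisfies $\norm{P_t} \le 1$ because $0 \le \e^{-t\lambda} \le 1$ on the spectrum, and inherits the semigroup property and strong continuity from the multiplicativity and continuity of the calculus. Conversely, a strongly continuous self-adjoint contraction semigroup has an infinitesimal generator $-D$; that $D$ is self-adjoint and non-negative is the self-adjoint case of the Hille--Yosida theorem, and the spectral representation $P_t = \e^{-tD}$ shows the two assignments are inverse to one another.

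For the operator--form correspondence I would invoke Kato's first representation theorem. From a non-negative self-adjoint $D$ one defines $\qf E(f,g) := \iprod{D^{1/2}f}{D^{1/2}g}$ with $\dom \qf E := \dom D^{1/2}$, using the spectral calculus to form the square root; this form is symmetric, non-negative, and closed, since $\dom D^{1/2}$ is complete in the form norm $\normsqr[\qf E]{f} = \normsqr[\Lsqr M]{f} + \qf E(f)$. In the other direction, a closed non-negative symmetric form $\qf E$ makes $(\dom \qf E, \norm[\qf E]\cdot)$ a Hilbert space, and representing the bounded form there via Riesz yields a unique self-adjoint $D \ge 0$ with $\dom D \subseteq \dom \qf E$ and $\qf E(u,v) = \iprod{Du}{v}$ for all $u \in \dom D$, $v \in \dom \qf E$. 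Uniqueness in the representation theorem makes the three assignments compatible and mutually inverse, which closes the equivalence.

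For the second assertion I would use the Beurling--Deny criteria, specialised to $H = \Lsqr M$. The core statement is that a symmetric strongly continuous contraction semigroup is Markovian exactly when the unit contraction operates on the associated form, that is, exactly when $\qf E$ is a Dirichlet form; this is proved by approximating $P_t$ through its resolvent and testing the order relations $0 \le f \le 1 \Rightarrow 0 \le P_t f \le 1$ against the contraction inequality $\qf E(f^\#) \le \qf E(f)$. The remaining equivalence with $D$ being a Dirichlet operator follows from the representation formula $\iprod{Df}{g} = \qf E(f,g)$: testing against $g = \max\{f-1,0\}$ translates the sign condition $\iprod{Df}{\max\{f-1,0\}} \ge 0$ on $\dom D$ into the form inequality via a standard algebraic manipulation of the truncation, and the statement extends from $\dom D$ to $\dom \qf E$ by density and closedness.

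The main obstacle will be the operator--form correspondence of Kato's representation theorem, specifically verifying closedness of the form and the domain inclusion $\dom D \subseteq \dom \qf E$ with the exact identity $\qf E(u,v) = \iprod{Du}{v}$, together with the approximation argument underlying Beurling--Deny, which is what genuinely links the pointwise order conditions defining the Markov property to the algebraic contraction condition defining a Dirichlet form. Everything else reduces to bookkeeping with the spectral calculus, and since all of these facts are classical I would primarily organise and cite the references rather than reproduce the technical estimates.
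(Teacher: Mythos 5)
Your proposal is correct and follows essentially the same route as the paper's own (very brief) proof: the operator--form correspondence via Kato's representation theorem with $\qf E(f,g)=\iprod{Df}{g}$ and $\dom \qf E = \dom \sqrt{D}$, the operator--semigroup correspondence via the generator/spectral calculus, and the Markov/Dirichlet equivalence delegated to the classical Beurling--Deny theory in the cited references. The only substantive difference is cosmetic: you correctly write $P_t = \e^{-tD}$ for non-negative $D$, whereas the paper's sketch has the sign convention $P_t = \e^{tD}$.
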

\begin{proof}
  The equivalence of operator and form is given by $\iprod {Df} g =
  \qf E(f,g) $ and $\dom \qf E = \dom \sqrt D$. The operator $D$ is
  the generator of the semigroup $P_t:= \e^{tD}$.
\end{proof}

\begin{definition}
  A family $\{p_t\}_{t>0}$ of functions $\map {p_t}{M \times M} \R$ is
  called a \emph{heat kernel} if it satisfies the following conditions
  for all $t>0$.
  \begin{enumerate}
  \item Measurability: $p_t(\cdot,\cdot)$ is $\mu \times \mu$ measurable
  \item Markov property: $p_t(x,y) \ge 0$ for $\mu$-almost all $x,y$
    and $\int_M p_t(x,y) \dd \mu(x) = 1$ for $\mu$-almost all $y$
  \item Symmetry: $p_t(x,y) = p_t(y,x)$ for $\mu$-almost all $x,y$
  \item Semigroup property: for any $s,t>0$ and $\mu$-almost all $x,y$
    we have
    \begin{align*}
      p_{s+t}(x,y) = \int_M p_s(x,z)p_t(z,y) \dd\mu(z)
    \end{align*}
  \item Approximation of identity: for any $f \in \Lsqr M$ we have
    \begin{align*}
      \int_M p_t(x,y)f(y) \dd\mu(y) \to_{t \to 0^+} f(x)
    \end{align*}
  \end{enumerate}
\end{definition}

\begin{remark}
  \label{rem:compactification}
  One can consider more general heat kernels that satisfy only the
  sub-Markov property $\int_{M} p_t(x,y) \dd\mu(x) \le 1$. In this
  case one usually expands the space $M$ by a cemetery point $\Delta$
  and then extends the heat kernel so that it satisfies the Markov
  property on this larger space. In a similar way one can generalise
  the definition of a Markov process below. This construction is well
  known and explained for example in~\cite{FOT11}. However it makes
  various formulations a lot more clunky and gets somewhat
  technical. In the interest of clear exposition we will restrict to
  the strict Markov property here but remark that the generalization
  to sub-Markov is straight forward.
\end{remark}

\begin{remark}
  Heat kernels are only defined up to behaviour on a set of
  $\mu$-measure zero. We will thus identify heat kernels that agree
  $\mu$-almost everywhere.

  If the heat kernel is continuous, this distinguishes this particular
  heat kernel, so that all the $\mu$-almost everywhere statements can
  be replaced by everywhere statements.
\end{remark}

\begin{definition}
  \label{def:hunt-process}
  A \emph{Markov process} on the set of continuous paths $\mathcal{P}(M)$
  consists of a family of probability measures $\{\Proba_x\}_{x \in
    M}$ on $\mathcal{P}(M)$ such that $\Proba_x\left( \omega(0)=x
  \right) =1$ and a stochastic process $X_t(\omega):=\omega(t)$ on
  $\mathcal{P}(M)$ with values in $M$. It additionally satisfies the
  Markov property. See the appendix for more details.
  
  If it satisfies the \emph{strong Markov property},
  that is
  \begin{align*}
    \mathbb{P}_x(X_{\zeta+s} \in U | \mathcal{F}_{\zeta}) =
    \mathbb{P}_{X_{\zeta}}(X_s \in U)
  \end{align*}
  holds for any time stopping function $\zeta$, then it is called a
  \emph{continuous Hunt process} or a \emph{diffusion}. 
\end{definition}
By convention one refers to $X_t$ as the Hunt process, the probability
measure is implicit.

\begin{theorem}[\cite{Grigoryan03}]
  A heat kernel $p_t$ defines a semigroup via
  \begin{align*}
    P_t f(x) := \int_M p_t(x,y)f(y) \dd\mu(y)
  \end{align*}
  This semigroup is strongly continuous, self-adjoint, contracting and
  Markov.
\end{theorem}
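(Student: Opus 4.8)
The plan is to verify each of the four claimed properties of the operator family $P_t$ defined by integration against the heat kernel $p_t$, using directly the five defining conditions of a heat kernel from the Definition above. The strategy is standard: translate each heat-kernel axiom into the corresponding operator-theoretic property. I would organise the proof around the four conclusions (strong continuity, self-adjointness, contraction, Markov property) and treat them in a convenient logical order.

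\textbf{Boundedness and the semigroup property first.} I would begin by checking that $P_t$ maps $\Lsqr M$ into itself and is in fact a contraction, since this is needed before any of the other statements make sense. The natural tool is the Markov normalisation $\int_M p_t(x,y)\,\dd\mu(x)=1$ together with symmetry, which lets one estimate $\norm[\Lsqr M]{P_t f}$ by an application of the Cauchy--Schwarz inequality (or Jensen's inequality, using that $p_t(x,\cdot)\,\dd\mu$ is a probability measure). This simultaneously gives $\norm{P_t}\le 1$, i.e.\ the contraction property. The semigroup identity $P_{s+t}=P_s P_t$ then follows immediately from the Chapman--Kolmogorov relation (condition~(iv)) by Fubini's theorem, which is justified by the non-negativity and integrability just established.

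\textbf{Self-adjointness and the Markov property.} Self-adjointness is a direct computation: for $f,g\in\Lsqr M$ one writes out $\iprod[\Lsqr M]{P_t f}{g}$ as a double integral and uses the symmetry axiom $p_t(x,y)=p_t(y,x)$ together with Fubini to interchange the roles of $f$ and $g$, giving $\iprod[\Lsqr M]{P_t f}{g}=\iprod[\Lsqr M]{f}{P_t g}$; boundedness then upgrades this symmetry to genuine self-adjointness. For the Markov property I would take $f$ with $0\le f\le 1$ $\mu$-almost everywhere and use non-negativity of $p_t$ to get $P_t f\ge 0$, and the normalisation $\int_M p_t(x,y)\,\dd\mu(x)=1$ to get $P_t f\le 1$; here symmetry is again used so that the normalisation in the correct variable is available.

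\textbf{Strong continuity.} This is the step I expect to be the main obstacle, since the other three properties are essentially algebraic manipulations of the axioms, whereas strong continuity requires a genuine limit argument. Condition~(v), approximation of the identity, gives $P_t f\to f$ pointwise (or weakly) as $t\to 0^+$; the work is to promote this to convergence in the $\Lsqr M$-norm and then to strong continuity at every $t>0$. I would argue as follows: the family $\{P_t\}$ is uniformly bounded by the contraction property, so by a standard density argument it suffices to establish $\Lsqr M$-convergence $P_t f\to f$ on a dense subclass and control the error uniformly; convergence at general $t$ then follows from the semigroup property, writing $P_{t+s}f-P_t f=P_t(P_s f-f)$ and using contraction of $P_t$. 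The delicate point is passing from the convergence supplied by axiom~(v) to norm convergence, which typically invokes the self-adjointness and the spectral theorem (or a uniform-integrability/dominated-convergence argument together with the contraction bound) to rule out loss of mass. Once norm continuity at $t=0^+$ is in hand, strong continuity on all of $[0,\infty)$ is routine via the semigroup law.
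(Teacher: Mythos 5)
Your proposal is essentially correct, but note that the paper does not actually prove this theorem: it is quoted from Grigoryan's work with only the one-line remark that the Markov property ``follows trivially from the definition.'' So there is no in-paper argument to compare against; what you have written is the standard proof one finds in the cited reference. Your treatment of contraction (Jensen/Cauchy--Schwarz against the probability measure $p_t(x,\cdot)\,\dd\mu$, using symmetry to normalise in the right variable), of the semigroup law via Chapman--Kolmogorov and Fubini, of self-adjointness via the symmetry axiom, and of the Markov property is all sound. The one point deserving care is exactly the one you flag: axiom~(v) as stated in the paper does not specify the mode of convergence of $P_tf\to f$. In Grigoryan's formulation this convergence is in $\Lsqr M$, which makes strong continuity at $t=0^+$ immediate and reduces continuity at general $t$ to the semigroup law plus contraction, as you say. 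If one insists on reading axiom~(v) as merely pointwise, your fallback is the right one: weak convergence $P_tf\rightharpoonup f$ together with $\norm{P_tf}\le\norm f$ forces $\norm{P_tf}\to\norm f$ (since weak convergence gives $\norm f\le\liminf\norm{P_tf}$), and weak convergence plus norm convergence yields strong convergence in a Hilbert space; alternatively $\iprod{P_tf}{f}=\normsqr{P_{t/2}f}$ by self-adjointness and the semigroup law does the same job. Either way the argument closes, so there is no gap, only a point where you should commit to one reading of axiom~(v).
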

The Markov property is not mentioned in~\cite{Grigoryan03}, but it
follows trivially from the definition.

\begin{theorem}[\cite{BlumenthalGetoor68}] 
  \label{thm:markov_transition}
  A continuous Hunt process $X_t$ on $M$ defines a heat kernel via
  \begin{align*}
    \int_U p_t(x,y)d\mu(y) = \Proba_x(X_t \in U)
  \end{align*}
  The heat kernel is the density function of the transition function
  $p_t(x,U):=\Proba_x(X_t \in U)$. See the appendix for details.
\end{theorem}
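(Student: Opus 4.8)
The plan is to build the five defining properties of a heat kernel from the \emph{transition function} $p_t(x,U):=\Proba_x(X_t\in U)$. For each fixed $t>0$ and $x\in M$ this is a Borel probability measure on $M$, of total mass $\Proba_x(X_t\in M)=1$ since the process takes values in $M$. The crucial step is to extract a \emph{density}: one must show that $p_t(x,\cdot)$ is absolutely continuous with respect to $\mu$, so that the Radon--Nikodym theorem produces $y\mapsto p_t(x,y)\ge 0$ with $\int_U p_t(x,y)\dd\mu(y)=\Proba_x(X_t\in U)$ for every Borel set $U$. This absolute continuity is precisely the structural input underlying \cite{BlumenthalGetoor68}; in our framework it is available because the process is $\mu$-symmetric and its semigroup is given by a symmetric $\Lsqr M$-kernel. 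I expect this existence-of-density step to be the main obstacle, as for a generic Hunt process the transition measures need not be absolutely continuous with respect to $\mu$.

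Granting the density, property~(i) (measurability) is obtained by choosing the Radon--Nikodym derivatives jointly measurably: the map $x\mapsto p_t(x,U)$ is Borel for each $U$ (this is part of being a Markov transition function), and a measurable version of Radon--Nikodym then yields a $\mu\times\mu$-measurable representative $p_t(\cdot,\cdot)$. Property~(ii) (Markov) has two halves: non-negativity holds because $p_t(x,\cdot)$ is the density of a positive measure, and the normalisation $\int_M p_t(x,y)\dd\mu(y)=\Proba_x(X_t\in M)=1$ follows by taking $U=M$. The complementary normalisation $\int_M p_t(x,y)\dd\mu(x)=1$ and the symmetry~(iii), $p_t(x,y)=p_t(y,x)$, both follow from the $\mu$-reversibility of $X_t$, equivalently from self-adjointness of the associated semigroup on $\Lsqr M$ (cf.\ \Thm{semigroup_form_operator}).

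For property~(iv) (semigroup) I would feed the Markov property of $X_t$ into the Chapman--Kolmogorov identity for the transition function,
\begin{align*}
  p_{s+t}(x,U)=\int_M p_t(z,U)\,p_s(x,\dd z),
\end{align*}
and substitute $p_s(x,\dd z)=p_s(x,z)\dd\mu(z)$ together with $p_t(z,U)=\int_U p_t(z,y)\dd\mu(y)$. Interchanging the order of integration and comparing the two resulting expressions for $\int_U p_{s+t}(x,y)\dd\mu(y)$, valid for all Borel $U$, gives the convolution identity $p_{s+t}(x,y)=\int_M p_s(x,z)p_t(z,y)\dd\mu(z)$ for $\mu$-almost all $x,y$. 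For property~(v) (approximation of identity) I would use that $X_t$ has right-continuous sample paths with $\Proba_x(X_0=x)=1$, so that the transition measures $p_t(x,\cdot)$ converge weakly to the Dirac mass at $x$ as $t\to0^+$; hence $\int_M f(y)\,p_t(x,\dd y)\to f(x)$ for continuous $f$, and the uniform bound $\norm{P_t}\le1$ together with the density of $\Cont M$ in $\Lsqr M$ upgrades this to the strong continuity of $\{P_t\}$ on $\Lsqr M$, which is the assertion of~(v). The only genuinely delicate point remains the absolute continuity of the transition measures; once that is secured, properties~(i)--(v) reduce to the routine translations above.
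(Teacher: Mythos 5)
Your proposal is correct and takes essentially the same route as the paper: the paper gives no proof in the main text (the theorem is quoted from Blumenthal--Getoor), and its appendix reduces the existence of the density to absolute continuity of the transition measures $p_t(x,\cdot)$ with respect to $\mu$ via the Radon--Nikod\'ym theorem --- exactly the step you single out as the crux --- while the verification of the five heat-kernel axioms is left as the routine translation you carry out. One caveat: absolute continuity does not follow from $\mu$-symmetry alone (your justification is slightly circular, since ``the semigroup is given by a symmetric $\Lsqr M$-kernel'' is what is being established), but you correctly flag it as the genuine additional hypothesis, and the paper itself treats this point no more rigorously.
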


Two stochastic processes are called \emph{equivalent} if their
transitions functions agree outside of a properly exceptional
set. Note that all properly exceptional sets have $\mu$-measure zero.

\begin{theorem}[{\cite[Thm 7.2.1 and Thm 4.2.8]{FOT11}}]
  \label{Dirichlet_Hunt}
  Given a regular local Dirichlet form $\qf E$ on $\Lsqr M$, there
  exists a continuous Hunt process $X_t$ on $M$ whose Dirichlet form
  is the given one $\qf E$.
  This Hunt process is unique up to equivalence.
\end{theorem}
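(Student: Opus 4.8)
The plan is to reconstruct the Fukushima--Oshima--Takeda correspondence, building the process out of the analytic data attached to $\qf E$. First I would invoke \Thm{semigroup_form_operator} to pass from the form to its strongly continuous contraction semigroup $\{P_t\}_{t \ge 0}$ on $\Lsqr M$ and to the associated resolvent $\{G_\alpha\}_{\alpha>0}$ with $G_\alpha = (\alpha - D)^{-1}$. Regularity enters immediately and is indispensable: since $\Cont M \cap \dom \qf E$ is uniformly dense in $\Cont M$ and $\qf E$-dense in $\dom \qf E$, one can transfer the purely $\Lsqr M$-theoretic resolvent into an object compatible with the continuous structure of $M$. This lets me introduce the $1$-capacity $\mathrm{Cap}$ associated to $\qf E$, the notion of an exceptional (zero-capacity) set, and for each $u \in \dom \qf E$ a quasi-continuous $\mu$-version $\tilde u$, unique up to a set of zero capacity. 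This potential theory is the language in which both the existence claim and the phrase ``unique up to equivalence'' are to be read.

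The analytic heart, and the main obstacle, is to upgrade the semigroup $\{P_t\}$ into an honest Markovian transition function $p_t(x,\cdot)$ defined for quasi-every starting point $x$ and carried by a process with good sample-path behaviour. I would do this in two stages. First, construct a right process: using the resolvent one obtains, via a Ray--Knight type compactification of $M$ together with the quasi-continuity of potentials, a family of measures $\{\Proba_x\}$ and a strongly Markovian, right-continuous process whose resolvent agrees with $G_\alpha$ quasi-everywhere. Second, improve this to a Hunt process by establishing quasi-left-continuity of the paths up to the lifetime $\zeta$; this is where the fine potential theory (hitting distributions of increasing sequences of closed sets, and nests exhausting $M$ off an exceptional set) is used. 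Once the transition function is in place, \Thm{markov_transition} identifies it with a heat kernel and the circle closes: the Dirichlet form of the constructed process is the given $\qf E$.

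It remains to use \emph{locality} of $\qf E$ to see that the paths are in fact continuous, so that the Hunt process is a diffusion. Here I would appeal to the Beurling--Deny representation: a regular Dirichlet form splits into a strongly local (diffusion) part, a pure-jump part governed by a jump measure $J$ on $M \times M \setminus \mathrm{diag}$, and a killing part. Locality of $\qf E$ forces the jump measure to vanish, and the absence of a jumping part of the form is exactly the statement that the associated additive functional has no discontinuities, i.e.\ the process has continuous sample paths on $[0,\zeta)$. Thus the Hunt process is continuous, as required.

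Finally, for uniqueness up to equivalence, suppose $X_t$ and $X_t'$ are two continuous Hunt processes whose Dirichlet forms both equal $\qf E$. Their resolvents, applied to continuous functions, are determined by $\qf E$ alone and hence coincide as $\Lsqr M$-operators; by quasi-continuity the corresponding transition functions $p_t(x,\cdot)$ and $p_t'(x,\cdot)$ then agree off a set of zero capacity, and such a set is contained in a properly exceptional set, which is in particular $\mu$-null. By the definition of equivalence this is exactly the assertion that $X_t$ and $X_t'$ are equivalent. The hardest and least formulaic step throughout is the first stage of the construction: producing a strong Markov process with right-continuous paths realising an abstractly given semigroup, which is genuinely the technical core of the Fukushima--Oshima--Takeda theory rather than a routine verification.
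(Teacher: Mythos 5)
You should first be aware that the paper does not actually prove this statement: it is imported wholesale from Fukushima--Oshima--Takeda \cite{FOT11} (their Theorems 7.2.1 and 4.2.8), and the only thing resembling a proof in the text is a remark sketching ``the basic construction'' --- turn the semigroup into a transition function, apply the Kolmogorov extension theorem to get a process $X_t$ and measures $\Proba_x$ with $p_t(x,U)=\Proba_x(X_t\in U)$, and concede that the sample-path regularity is the hard part. Measured against that, your proposal is correct and considerably more informative: it is a faithful roadmap of the actual FOT argument, with every load-bearing hypothesis used where it is genuinely needed --- regularity of $\qf E$ to set up capacity, exceptional sets and quasi-continuous versions; the construction of a right-continuous strong Markov process realising the resolvent; the upgrade to a Hunt process via quasi-left-continuity; locality of $\qf E$ (through the Beurling--Deny decomposition, killing the jump measure) to get continuity of paths; and uniqueness read off from agreement of quasi-continuous transition functions outside a properly exceptional set. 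You are also honest that the first stage, manufacturing a right process from an abstract resolvent, is the many-page technical core that neither you nor the paper can compress.

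Two remarks on route rather than correctness. First, you build the process via a Ray--Knight-type compactification, which is the standard path in the general theory of right processes, whereas the construction in \cite{FOT11} (and the paper's heuristic) proceeds more directly, producing the transition kernel from quasi-continuous modifications of $P_t f$ on a nest of compact sets and then invoking Kolmogorov extension; both work, and the compactification route buys an early strong Markov property at the cost of having to prune the enlarged state space back to $M$ off an exceptional set. Second, in the uniqueness step, ``the resolvents coincide as $\Lsqr M$-operators'' only gives agreement of $G_\alpha f$ for $\mu$-almost every starting point; to promote this to agreement of the transition functions quasi-everywhere you need the fact that for a Hunt process associated with a regular Dirichlet form the functions $x \mapsto G_\alpha f(x)$ are themselves quasi-continuous versions of the $\Lsqr M$-resolvent, together with the fact that every set of zero capacity sits inside a properly exceptional set. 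You gesture at both, so this is a matter of making the dependence explicit rather than a gap.
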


\begin{remark}
  This theorem is by far the most difficult part in the
  equivalence. The full proof goes over several dozen pages. The basic
  construction is as follows. Given a family of probability measures
  $p_t(x,U)$, for $x$ fixed and $t \in [0,T]$, the Kolmogorov
  extension theorem guarantees the existence of a stochastic process
  $X_t$ and a probability measure $\Proba_x$ such that
  \begin{align*}
    p_t(x,U) = \Proba_x(X_t \in U)
  \end{align*}
  but proving that this process has the claimed regularity is very
  hard.
\end{remark}

%
\section{Locality of the Wiener measure and the heat kernel}
\label{sec:locality_wiener_measure}
%

In this section we will show that the Wiener measure is local and then that the heat
kernel is local provided it satisfies a suitable decay bound. We will first introduce the
notion of martingales and then quote a uniqueness and existence
theorem for the Wiener measure.  Next, if two spaces are identical on
some subset, we can define a new measure on the set of paths of one of the spaces by using
one of the measures inside the subset and the other one outside. This
is called splicing and will be explained in further detail. One can
then show that this spliced Wiener measure is also compatible with the
operator, by uniqueness this implies that the spliced measure is
identical to the original measure. In other words, on the subset where
the spaces and operators agree, so do the Wiener measures. Combined
with a decay bound (see \Def{heat_kernel_decay}) this implies that
the heat kernel is local as well.

\subsection{Local isometries}
\label{ssec:local.isom}

Let $(M,d,\mu)$ and $(\oS M,\oS d, \oS \mu)$ be two metric measure
spaces with energy forms $\qf E$ and $\oS{\qf E}$ and associated
operators $D$ and $\oS D$, respectively. 

Assume that $U \subset M$ and $\oS U \subset \oS M$ are open and that there exists a 
local isometry $\map \psi U {\oS U=\psi(U)}$.
For a function  $\map f M \R$ with
$\supp f \subset U$ we denote by $\psi_*f$ the function $f\restr U
\circ \psi^{-1}$ extended by $0$ onto $\oS M$.  Note that
$\map{\psi_*}{\Lsqr U}{\Lsqr{\oS U}}$ is unitary.
\begin{definition}
  \label{def:forms.ops.agree}
  \indent
  \begin{enumerate}
  \item
    \label{agree.a}
    We say that $\qf E$ and $\oS{\qf E}$ \emph{agree on $U$ and $\oS
      U$} if there is a measure preserving isometry $\map \psi U {\oS
      U=\psi(U) \subset \oS M}$ such that for any $f \in \dom \qf E$
    with $\supp f \subset U$ we have $\psi_*f \in \dom \oS{\qf E}$ and
    $\qf E(f) = \oS{\qf E}(\psi_* f)$.

  \item 
    \label{agree.b}
    Similarly, we say that $D$ and $\oS D$ \emph{agree on $U$ and $\oS
      U$} if there is a measure preserving isometry $\map \psi U {\oS
      U=\psi(U) \subset \oS M}$ such that for any $f \in \dom D$ with
    $\supp f \subset U$ we have $\psi_*f \in \dom \oS D$ and
    $\psi_*(Df)=\oS D(\psi_*f)$.
  \end{enumerate}
\end{definition}

\begin{lemma}
  \label{lem:forms.ops.agree}
  Assume that $\qf E$ and $\oS {\qf E}$ are local Dirichlet forms,
  then $\qf E$ and $\oS {\qf E}$ agree on $U$ and $\oS U$ if and only
  if $D$ and $\oS D$ agree on $U$ and $\oS U$.
\end{lemma}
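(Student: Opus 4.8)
The plan is to pass between forms and operators through the duality $\qf E(f,g)=\iprod{Df}{g}$ (valid for $f\in\dom D$ and $g\in\dom\qf E$) together with polarisation, and to use the locality hypothesis precisely in order to enlarge the class of admissible test functions from those supported in $\oS U$ to all of $\dom\oS{\qf E}$. Throughout I will use that $\map{\psi_*}{\Lsqr U}{\Lsqr{\oS U}}$ is unitary, that $\dom D\subset\dom\qf E$ with $\dom D$ a form core, and the Dirichlet-form facts that a form-domain function may be multiplied by a suitable cut-off without leaving the domain and that for a local form $\supp(Df)\subset\supp f$. Since $\psi$ is a bijective measure-preserving isometry, form agreement transports $\{f\in\dom\qf E:\supp f\subset U\}$ isometrically (in the form norm $\norm[\qf E]\cdot$) onto $\{g\in\dom\oS{\qf E}:\supp g\subset\oS U\}$; I record this symmetric, bijective reformulation first, since the backward transport via $\psi^{-1}$ is needed below.

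For the forward implication (form agreement $\Rightarrow$ operator agreement), fix $f\in\dom D$ with $\supp f\subset U$. Then $f\in\dom\qf E$, so form agreement already gives $\psi_* f\in\dom\oS{\qf E}$, and it remains to identify it as an element of $\dom\oS D$. By polarisation, $\qf E(f,g)=\oS{\qf E}(\psi_* f,\psi_* g)$ for every $g\in\dom\qf E$ with $\supp g\subset U$, while the left-hand side equals $\iprod{Df}{g}=\iprod{\psi_*(Df\restr U)}{\psi_* g}$ by unitarity. Hence $\oS{\qf E}(\psi_* f,k)=\iprod{\psi_*(Df\restr U)}{k}$ for all $k$ in the image of $\{g\in\dom\qf E:\supp g\subset U\}$ under $\psi_*$. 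To extend this to an arbitrary $h\in\dom\oS{\qf E}$, choose a cut-off $\chi$ with $\chi\equiv1$ on a neighbourhood of the compact set $\supp(\psi_* f)\subset\oS U$ and $\supp\chi\subset\oS U$; then $(1-\chi)h$ has support disjoint from $\supp(\psi_* f)$, so locality of $\oS{\qf E}$ kills that term and $\oS{\qf E}(\psi_* f,h)=\oS{\qf E}(\psi_* f,\chi h)$, while $\chi h$ lies in the image by the bijective reformulation. Because locality forces $\supp(Df)\subset\supp f$, the vector $w:=\psi_*(Df\restr U)$ is supported where $\chi\equiv1$, so $\iprod w{\chi h}=\iprod w h$ and we conclude $\oS{\qf E}(\psi_* f,h)=\iprod w h$ for all $h$. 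This is exactly the assertion $\psi_* f\in\dom\oS D$ with $\oS D\psi_* f=\psi_*(Df)$.

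For the converse, fix $f\in\dom D$ with $\supp f\subset U$. Operator agreement gives $\psi_* f\in\dom\oS D\subset\dom\oS{\qf E}$ and $\oS D\psi_* f=\psi_*(Df)$, whence, using unitarity and $\supp(Df)\subset U$,
\begin{align*}
  \oS{\qf E}(\psi_* f)
    =\iprod{\oS D\psi_* f}{\psi_* f}
    =\iprod{\psi_*(Df\restr U)}{\psi_* f}
    =\iprod{Df}{f}
    =\qf E(f).
\end{align*}
To reach an arbitrary $f\in\dom\qf E$ with $\supp f\subset U$, I approximate it in the form norm by $f_n\in\dom D$ with $\supp f_n\subset U$; then $\psi_* f_n$ is $\norm[\oS{\qf E}]\cdot$-Cauchy, because $\psi_*$ preserves the form norm on such functions, and closedness of $\oS{\qf E}$ together with the $L^2$-convergence $\psi_* f_n\to\psi_* f$ yields $\psi_* f\in\dom\oS{\qf E}$ and $\oS{\qf E}(\psi_* f)=\lim\qf E(f_n)=\qf E(f)$.

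The main obstacle is not the algebra of the argument but the density and localisation facts it rests on: that multiplication by a cut-off keeps a form-domain function in the domain and that $\supp(Df)\subset\supp f$ (both flowing from the Dirichlet and locality properties, and both needed to make the cut-off reduction legitimate), and above all the support-preserving approximation in the converse, namely that $\{f\in\dom D:\supp f\subset U\}$ is $\norm[\qf E]\cdot$-dense in $\{f\in\dom\qf E:\supp f\subset U\}$. Establishing this last density --- equivalently, the surjectivity of the transport map used in the forward direction --- is where the regularity of the Dirichlet forms and the existence of local cores must enter, and it is the step I would expect to demand the most care.
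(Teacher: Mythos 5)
Your argument follows essentially the same route as the paper's: both directions pass through the identity $\qf E(f,g)=\iprod{Df}{g}$, use locality to confine $\supp Df$ to $U$, and extend from test functions supported in $\oS U$ to all of $\dom\oS{\qf E}$ by splitting off a part supported away from $\supp(\psi_*f)$ (your cut-off $\chi$ plays the role of the paper's open set $V$ and its ``spanning'' claim). The only difference is one of explicitness: you spell out the converse (which the paper dismisses as ``similar'') and you correctly flag the support-preserving density of $\{\,f\in\dom D:\supp f\subset U\,\}$ in $\{\,f\in\dom\qf E:\supp f\subset U\,\}$ and the cut-off stability of the form domain as the facts that actually carry the proof --- facts the paper's version also relies on but does not justify.
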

\begin{proof}
  Note first that $\qf E$ is local (i.e., $\qf E(f,g)=0$ for all $f,g
  \in \dom \qf E$ with $\supp f \cap \supp g = \emptyset$) if and only
  if $D$ is local (i.e., $\supp Df \subset \supp f$ for all $f \in
  \dom D$).

  Let $f \in \dom D$ with $\supp f \subset U$.  Then there is an
  open set $V$ such that $\supp f \subset V \subset \clo V \subset U$.
  If $g \in \dom \qf E$ with $\supp g \subset U$, then
  \begin{equation*}
    \iprod[\Lsqr {\oS M}]{\psi_*(Df)} {\psi_*g}
    = \iprod[\Lsqr M]{Df} g
    = \qf E(f,g)
    = \oS{\qf E}(\psi_*f,\psi_*g)
  \end{equation*}
  as $\psi_*$ is an isometry for functions with support in $U$ and
  $\oS U$ (we used the locality of $D$ here) and as $\qf E$, $\oS{\qf
    E}$ agree on $U$ and $\oS U$. 
  
  For $\oS g \in \dom \oS{\qf E}$ with $\supp \oS g \cap V =
  \emptyset$, we have $\iprod[\Lsqr {\oS M}]{\psi_*(Df)} {\oS g}=0$
  (again by locality of $D$) and $\oS{\qf E}(\psi_*f,\oS g)=0$.  Since
  all $\psi_*g$ with $g \in \dom \qf E$ and $\supp g \subset U$ and
  $\oS g \in \dom \oS{\qf E} \cap V = \emptyset$ span $\dom \oS{\qf
    E}$, we have shown that $\psi_* f \in \dom \oS D$ and $\oS
  D(\psi_*f)=\psi_*(Df)$.

  The opposite implication can be seen similarly.
\end{proof}

\subsection{Martingales}

Recall that $\mathcal P(M)$ denotes the set of continuous paths on a
metric measure space $(M,d,\mu)$.
\begin{definition}
  A stochastic process $\map Y {[0, \infty) \times \mathcal{P}(M)} \R$
  is called a \emph{martingale} with respect to the family of
  probability measures $\Proba=\{\Proba_x\}_{x\in M}$ and the
  increasing sequence of $\sigma$-algebras $\mathcal{F}_t$ if the
  following conditions are fulfilled:
  \begin{enumerate}
  \item Measurability: $Y(t, \cdot)$ is $\mathcal{F}_t$ measurable.
  \item Right continuity: for every $\omega \in \mathcal{P}(M)$ the
    map $t \mapsto Y(t, \omega)$ is right continuous.
  \item Conditional constancy: for $0 \le s < t$ we have
    \begin{align*}
      Y(s, \cdot) = \mathbb E^{\Proba_x}[Y(t, \cdot) | \mathcal{F}_{s}]
    \end{align*}
    holds $\Proba_x$-almost surely.
  \end{enumerate}
\end{definition}
In most of our applications the probability measures and the
$\sigma$-algebras will come from a Markov process $X_t$,
$\mathcal{F}_t=\sigma(X_s| s\le t)$ and we will just write $Y$ is a
martingale with respect to $X_t$.

\begin{definition}
  Let $X_t$ be a Markov process on $\mathcal{P}(M)$ and let $D$ be a
  non-negative self-adjoint operator on $M$. For each $f \in \dom D$,
  we define a stochastic process $\map{M_f}{[0,\infty) \times
    \mathcal{P}(M)} \R$ by setting
 \begin{align*}
   M_f (t, X) := f(X_t) - \int_0^t (D f)(X_s) \dd s.
 \end{align*}
 We say that the Markov process $X_t$ \emph{solves the martingale
   problem} for $(M,D)$ if $M_f$ is a martingale with respect to $X_t$
 for each $f \in \dom D$.
 \end{definition}

 \begin{theorem}[\cite{EthierKurtz86}]
   \label{thm:uniqueness_martingales}
   Let $(M,d,\mu)$ be a compact metric measure space and $D$ a
   non-negative self-adjoint operator on it.
   Then the continuous Hunt process $X_t$ associated to $D$ is the
   unique solution of the martingale problem for $(M,D)$.
\end{theorem}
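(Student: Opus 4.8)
The statement packages together two assertions—that the Hunt process $X_t$ associated to $D$ \emph{solves} the martingale problem for $(M,D)$, and that it is the \emph{only} solution—so I would treat them separately. By \Thm{semigroup_form_operator}, $D$ is the generator of the strongly continuous contraction semigroup $P_t=\e^{tD}$, and the transition function of the associated Hunt process satisfies $\mathbb{E}^{\Proba_x}[g(X_t)]=(P_tg)(x)$. For existence I would verify the martingale property of $M_f$ directly. Fix $f\in\dom D$ and $0\le s<t$. The ordinary Markov property gives $\mathbb{E}^{\Proba_x}[f(X_t)\mid\mathcal F_s]=(P_{t-s}f)(X_s)$, while Fubini together with the Markov property yields
\[
  \mathbb{E}^{\Proba_x}\Bigl[\int_s^t (Df)(X_r)\,\dd r \,\Big|\, \mathcal F_s\Bigr]
  = \int_0^{t-s}(P_r Df)(X_s)\,\dd r .
\]
Since $r\mapsto P_rf$ is differentiable with derivative $P_rDf=DP_rf$, the right-hand integral equals $(P_{t-s}f)(X_s)-f(X_s)$ by the fundamental theorem of calculus. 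Subtracting the $\mathcal F_s$-measurable quantity $\int_0^s(Df)(X_r)\,\dd r$ and cancelling the two copies of $(P_{t-s}f)(X_s)$ leaves $\mathbb{E}^{\Proba_x}[M_f(t)\mid\mathcal F_s]=M_f(s)$; measurability and right-continuity are inherited from the paths of $X_t$, so $X_t$ solves the martingale problem.

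For uniqueness, let $Y_t$ be any solution started at $x$ and put $u_f(t):=\mathbb{E}^{\Proba_x}[f(Y_t)]$. I would work with $f$ ranging over a core of continuous functions in $\dom D$, which exists by regularity of the associated Dirichlet form and which, on the compact space $M$, consists of bounded functions (so all expectations below are finite and $u_f$ is continuous and bounded). Taking expectations in the definition of $M_f$ and using $M_f(0)=f(x)$ gives $u_f(t)=f(x)+\int_0^t u_{Df}(r)\,\dd r$. This relation is not closed in $f$, so I would apply the Laplace transform $\hat u_f(\lambda):=\int_0^\infty\e^{-\lambda t}u_f(t)\,\dd t$, obtaining $\lambda\hat u_f(\lambda)-f(x)=\hat u_{Df}(\lambda)$ for all $\lambda>0$. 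The decisive move is to feed in $h:=R_\lambda f=(\lambda-D)^{-1}f$, which again lies in the core and satisfies $Dh=\lambda h-f$; linearity of $f\mapsto u_f$ then turns the identity for $h$ into $\lambda\hat u_h(\lambda)-h(x)=\lambda\hat u_h(\lambda)-\hat u_f(\lambda)$, and after cancellation
\[
  \hat u_f(\lambda)=h(x)=(R_\lambda f)(x)=\int_0^\infty\e^{-\lambda t}(P_tf)(x)\,\dd t .
\]
Hence $u_f$ and $t\mapsto(P_tf)(x)$ share the same Laplace transform for every $\lambda>0$; both being continuous and bounded, uniqueness of the Laplace transform forces $u_f(t)=(P_tf)(x)$. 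As the chosen $f$ form a measure-determining class on $M$, the one-dimensional marginal $\Proba_x\circ Y_t^{-1}$ is pinned down and agrees with that of $X_t$.

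It remains to promote equality of one-dimensional marginals to equality of the full laws on $\mathcal P(M)$, and this is where I expect the genuine difficulty to lie. The tool is the restart property of the martingale problem: a regular conditional distribution of a solution given $\mathcal F_s$ is, up to a null set, again a solution of the martingale problem started from $Y_s$, to which the one-dimensional uniqueness just established applies. Plugging this into an induction on finitely many times $0\le t_1<\dots<t_n$ shows that all finite-dimensional distributions of $Y$ coincide with those of $X_t$; since the coordinate evaluations generate the Borel $\sigma$-algebra on $\mathcal P(M)$, the two path-space measures are equal, and in particular $Y$ is (strong) Markov and equivalent to $X_t$. The technical heart is precisely this conditioning step—constructing regular conditional probabilities on the path space and making a measurable selection so that the conditioned measures are honest solutions—together with verifying that the continuous functions in $\dom D$ indeed determine Borel measures on $M$; both are carried out in detail in \cite{EthierKurtz86}, and I would invoke them rather than reprove them.
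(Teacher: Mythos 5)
The paper does not prove this theorem at all: it is imported wholesale from the cited reference \cite{EthierKurtz86}, so there is no in-paper argument to compare against. Your outline is precisely the standard Ethier--Kurtz proof that the citation points to --- existence via the Markov property and the fundamental theorem of calculus for $r\mapsto P_rf$ (Dynkin's formula), one-dimensional uniqueness via Laplace transforms and the resolvent trick $h=(\lambda-D)^{-1}f$, and promotion to full path-space laws by the regular-conditional-distribution restart argument --- and it is correct as a sketch. The one point worth being more careful about than you are is the very first step: $f\in\dom D$ is a priori only an $\Lsqr M$-class, so $f(X_t)$ and $(Df)(X_s)$ need quasi-continuous (or genuinely continuous) versions before $M_f$ is even a well-defined process, and the existence of a core of $D$ (not merely of the form $\qf E$) consisting of continuous functions with continuous image under $D$ does not follow from regularity of the Dirichlet form alone; this is exactly the technical debt that the Fukushima decomposition, or the hypotheses in \cite{EthierKurtz86}, are designed to pay off.
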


\subsection{Splicing measures}

We will follow the construction of~\cite{Stroock05} for splicing
measures on $\R^n$ and extend it to the more general setting of metric
measure spaces.

\begin{definition}
  \label{def:stopping-time}
  A measurable function $\map \zeta {\mathcal{P}(M)}{[0,\infty]}$ such
  that for all $t \ge 0$, $\{\zeta \le t\} \in \mathcal{F}_t^0$, is
  called a \emph{stopping time function}.

  Let $\omega \in \mathcal{P}(M)$ and $U \subset M$ be open, let
  \begin{align*}
    \zeta_U(\omega) := \inf \{t \ge 0 | \omega(t) \notin U\}
  \end{align*}
  be the first exit time from $U$ of the path $\omega$. The function
  $\zeta_U$ is an example of a stopping time function. All stopping
  time functions we are going to use are of this form.
\end{definition}

\begin{definition}[Splicing measures on the same space]
  \label{def:spliced_measures}
  Let $\oS \Proba=\{\oS \Proba_x\}_{x \in M}$ be a family of Borel
  probability measure on $\mathcal{P}(M)$ with $\oS
  \Proba_x(\omega(0)=x)=1$. Let $U \subset M$ be open and let $\chi_U$
  denote the characteristic function of $U$. Define a family of Borel
  probability measures $\delta_{\hat\omega} \otimes_t \oS \Proba$ on
  $\mathcal{P}(M)$ indexed by $t \in [0,\infty)$ and $\hat\omega \in
  \mathcal{P}(M)$ by setting
  \begin{align*}
    \left( \delta_{\hat\omega} \otimes_t \oS \Proba\right) 
          \left(\omega(s) \in U \right) 
    := \begin{cases}
      \chi_U(\hat\omega(s)) & s < t \\
      \oS \Proba_{\hat\omega(t)}(\omega(s-t) \in U)  & s \ge t
    \end{cases}
  \end{align*}
  
  Next, given another family of Borel probability measures $\Proba$ on
  $\mathcal{P}(M)$ and a stopping time function $\map \zeta
  {\mathcal{P}(M)} {[0,\infty]}$ define a new family of spliced
  measures $\Proba \otimes_{\zeta} \oS \Proba$ by setting
  \begin{align*}
    & (\Proba \otimes_{\zeta} \oS \Proba)_x(\omega(t) \in U) \\
    :=& \int_{\{\hat\omega \in \mathcal{P}(M) |\zeta(\hat\omega) < \infty\}}
           \delta_{\hat\omega} \otimes_{\zeta(\hat\omega)} 
        \oS \Proba(\omega(t) \in U) \dd\Proba_x(\hat\omega) 
    + \Proba_x(\omega(t) \in U | \{ \zeta(\omega) = \infty \}).
  \end{align*}
\end{definition}
This can be interpreted as follows. Each path $\omega$ is measured
with $\Proba$ until time $\zeta(\omega)$. After time $\zeta(\omega)$
it is measured by $\oS \Proba$ shifted back in time by
$\zeta(\omega)$.

\begin{remark}
  This spliced measure $\Proba \otimes_{\zeta} \oS \Proba$ is also
  completely determined by stating that $\Proba \otimes_{\zeta} \oS
  \Proba|_{\mathcal{F}_{\zeta}}= \Proba|_{\mathcal{F}_{\zeta}}$ and
  that the conditional distribution of shifted paths $\omega(\tau)
  \mapsto \omega(\tau+\zeta(\omega))$ under $\Proba \otimes_{\zeta}
  \oS \Proba$ with $\mathcal{F}_{\zeta}$ given, is just $\oS
  \Proba_{\omega(\zeta)}$.
\end{remark}

\begin{remark}
  Note that if the stopping time function is of the form $\zeta_U$
  defined above, paths that leave $U$ but reenter it at a later point
  would be measured with $\oS \Proba$ upon reentering. Hence the
  spliced measure is not just using one measure inside the set $U$ and
  the other one outside of it.
\end{remark}

\begin{definition}[Splicing measures on different spaces]
  Let $(M,d,\mu)$ and $(\oS{M},\oS{d},\oS{\mu})$ be two metric measure spaces. Assume there
  exists an open set $U \subset M$ and a measure preserving isometry
  $\map \psi U {\psi(U)} \subset \oS{M}$. Let $\Proba$ and $\oS
  \Proba$ be two families of Borel probability measures on
  $\mathcal{P}(M)$ and
  $\mathcal{P}(\oS{M})$. 
  
  For $A \subset U$ and $x \in U$ we let
  \begin{align*}
    \Proba^U_x(\omega(t) \in A) := \oS \Proba_{\psi(x)}(\psi(\omega(t))
    \in \psi(A))
  \end{align*}
  This defines a family of Borel measures on $\mathcal{P}(U)$.
\end{definition}
We can now define the spliced measure $\Proba^U \otimes_{\zeta_U}
\Proba$ which is a family of Borel probability measures on
$\mathcal{P}(M)$ as in \Def{spliced_measures}.

\subsection{Locality of the Wiener measure} 

For $a,b \in \R$ let $a \wedge b:= \min(a,b)$.
\begin{theorem}[{Doob's time stopping theorem~\cite{Stroock11}}]
  \label{thm:doob_time_stopping}
  If $Y(t,\omega)$ is a martingale with respect to a Markov process
  $X_t$, then for any time stopping function $\zeta$, $Y(t \wedge
  \zeta(\omega),\omega)$ is also a martingale with respect to the
  Markov process $X_t$.
\end{theorem}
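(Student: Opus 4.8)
The plan is to write $Y^\zeta(t,\omega) := Y(t \wedge \zeta(\omega),\omega)$ for the stopped process and to verify the three defining properties of a martingale with respect to $X_t$ directly. Two of them are essentially immediate. Right continuity holds because, for each fixed $\omega$, the map $t \mapsto t \wedge \zeta(\omega)$ is continuous and nondecreasing, so that $t \mapsto Y^\zeta(t,\omega)$ inherits right continuity from $Y$. For measurability one checks that $Y^\zeta(t,\cdot)$ is $\mathcal F_t$-measurable; since $\zeta$ is a stopping time function we have $\{\zeta \le s\} \in \mathcal F_s \subset \mathcal F_t$ for every $s \le t$, and the stopped value is then adapted. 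The substance of the statement lies in the conditional constancy, namely the optional stopping identity
\begin{align*}
\mathbb E^{\Proba_x}[\,Y^\zeta(t,\cdot) \mid \mathcal F_s\,] = Y^\zeta(s,\cdot)
\end{align*}
$\Proba_x$-almost surely for all $0 \le s < t$, and this is what I would spend the bulk of the argument on.

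First I would prove this identity for stopping time functions $\zeta$ taking only countably many values $t_1 < t_2 < \cdots$, after refining the grid so that it contains the chosen $s$ and $t$. In this discrete situation $Y^\zeta$ is the martingale transform of $Y$ by the predictable indicator sequence $\1_{\{\zeta > t_k\}}$, and a martingale transform of a martingale by a bounded predictable sequence is again a martingale; concretely this is verified by testing the telescoped increments of $Y^\zeta$ between $s$ and $t$ against an arbitrary event $A \in \mathcal F_s$, using that each event $A \cap \{\zeta > t_k\}$ lies in $\mathcal F_{t_k}$, so that the corresponding increment of $Y$ integrates to zero by the martingale property of $Y$.

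To reach a general stopping time function $\zeta$ I would approximate it from above by the dyadic stopping times $\zeta_n := 2^{-n}\lceil 2^n \zeta \rceil$; these decrease to $\zeta$, take countably many values, and remain stopping time functions since $\{\zeta_n \le t\} = \{\zeta \le 2^{-n}\lfloor 2^n t\rfloor\} \in \mathcal F_t$. Right continuity of $Y$ gives $Y^{\zeta_n}(t,\omega) \to Y^\zeta(t,\omega)$ pointwise, and the discrete case applied with terminal time $t$ yields $Y^{\zeta_n}(t,\cdot) = \mathbb E^{\Proba_x}[Y(t,\cdot) \mid \mathcal F_{t\wedge\zeta_n}]$, so that $\{Y^{\zeta_n}(t,\cdot)\}_n$ is a family of conditional expectations of the single integrable variable $Y(t,\cdot)$ and is therefore uniformly integrable. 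Hence the convergence also holds in $L^1(\Proba_x)$ and one may pass the limit through the conditional expectation:
\begin{align*}
\mathbb E^{\Proba_x}[Y^\zeta(t,\cdot) \mid \mathcal F_s]
&= \lim_{n\to\infty} \mathbb E^{\Proba_x}[Y^{\zeta_n}(t,\cdot) \mid \mathcal F_s] \\
&= \lim_{n\to\infty} Y^{\zeta_n}(s,\cdot)
= Y^\zeta(s,\cdot),
\end{align*}
the last equality again by right continuity, which proves the martingale property of $Y^\zeta$.

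The step I expect to be the main obstacle is exactly this discrete-to-continuous passage: the interchange of the limit with the conditional expectation relies on the uniform integrability furnished by the discrete optional stopping identity, and one must keep the conditioning consistent by noting $\mathcal F_{t\wedge\zeta} \subset \mathcal F_{t\wedge\zeta_n}$ because $\zeta \le \zeta_n$. The discrete case, though elementary, also demands careful bookkeeping of the adaptedness of the events $A \cap \{\zeta > t_k\}$ and of the telescoping increments.
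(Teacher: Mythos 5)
The paper does not prove this statement at all: it is quoted verbatim from Stroock's book as a known result, so there is no in-paper argument to compare against. Your proposal is the standard textbook proof of Doob's optional stopping theorem for right-continuous martingales (discrete case via martingale transforms, then dyadic approximation $\zeta_n=2^{-n}\lceil 2^n\zeta\rceil\downarrow\zeta$ from above, with uniform integrability of the family $Y^{\zeta_n}(t,\cdot)=\mathbb E^{\Proba_x}[Y(t,\cdot)\mid\mathcal F_{t\wedge\zeta_n}]$ justifying the $\Lsymb_1$ passage to the limit), and it is correct; the only point you pass over lightly is that $\mathcal F_t$-measurability of $Y(t\wedge\zeta(\cdot),\cdot)$ needs progressive measurability of $Y$, which follows from adaptedness together with the right continuity required in the paper's definition of a martingale.
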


\begin{lemma}[{\cite{StroockVaradhan79}}] 
  \label{lem:glueing_martingales}
  Let $\zeta$ be a stopping time and $X_t$ a Markov process. Recall
  that $\mathcal{F}_t^0=\sigma(X_s| s\le t)$ and let
  $\{Q_{\hat\omega}\}_{\hat\omega \in \mathcal{P}(M)}$ be the
  conditional probability distribution of $\Proba$ with respect to
  $\mathcal{F}_t^0$ (see \Def{cond-proba}).  Let $\map M {[0,\infty)
    \times \mathcal{P}(M)} \R$ be a stochastic process. Assume $M$ is
  $\Proba$-integrable, $M(t, \cdot)$ is $\mathcal{F}_t^0$-measurable
  and $M(\cdot, \omega)$ is continuous.  Then the following two
  statements are equivalent:
  \begin{enumiii}
  \item $M(t,\omega)$ is a martingale with respect to $X_t$.
  \item $M(t \wedge \zeta(\omega), \omega)$ is a martingale with
    respect to $X_t$ and $M(t,\omega)-M(t \wedge
    \zeta(\omega),\omega)$ is a martingale with respect to the
    measures $Q_{\hat\omega}$ and the $\sigma$-algebra
    $\mathcal{F}_t^0$ for all $\hat\omega \in \mathcal{P}(M)$ outside
    of a $\Proba$-null-set.
  \end{enumiii}
\end{lemma}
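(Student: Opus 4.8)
The strategy is to decompose $M$ about the stopping time and to exploit the disintegration of $\Proba$ into the conditional measures $Q_{\hat\omega}$. Write
\[
  N(t,\omega) := M(t,\omega) - M(t \wedge \zeta(\omega),\omega),
\]
so that $M(t,\omega) = M(t\wedge\zeta(\omega),\omega) + N(t,\omega)$, where $N(t,\cdot)=0$ for $t \le \zeta$ and $N$ inherits $\Proba$-integrability, $\mathcal F_t^0$-measurability and continuity from $M$. Since $M(\cdot\wedge\zeta)$ is a $\Proba$-martingale whenever $M$ is (\Thm{doob_time_stopping}), the first clause of statement~(ii) is automatic, and the whole lemma reduces to the equivalence: \emph{$N$ is a $\Proba$-martingale} $\Longleftrightarrow$ \emph{$N$ is a $Q_{\hat\omega}$-martingale for $\Proba$-almost every $\hat\omega$}. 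Throughout I use the defining disintegration property of the regular conditional distribution (\Def{cond-proba}), namely $\mathbb E^{\Proba}[Z\mid\mathcal F_\zeta^0](\hat\omega)=\mathbb E^{Q_{\hat\omega}}[Z]$ $\Proba$-almost surely, equivalently $\mathbb E^\Proba[Z]=\int \mathbb E^{Q_{\hat\omega}}[Z]\,\dd\Proba(\hat\omega)$ for integrable $Z$; the accompanying concentration of $Q_{\hat\omega}$ on paths agreeing with $\hat\omega$ up to $\zeta(\hat\omega)$ is what makes the reset interpretation of $N$ meaningful.

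For the implication (i)$\Rightarrow$(ii), fix $s<t$ and a bounded $\mathcal F_s^0$-measurable $\phi$, and consider the $\mathcal F_\zeta^0$-measurable function $\hat\omega \mapsto \mathbb E^{Q_{\hat\omega}}[(N(t)-N(s))\phi]$. Testing it against $\mathbf 1_B$ for $B \in \mathcal F_\zeta^0$ and applying the conditioning identity turns $\int_B \mathbb E^{Q_{\hat\omega}}[(N(t)-N(s))\phi]\,\dd\Proba$ into $\mathbb E^{\Proba}[\mathbf 1_B\,(N(t)-N(s))\,\phi]$, so it suffices to show the latter vanishes for every such $B$. I split along $\{\zeta \le s\}$ and $\{\zeta > s\}$. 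On $B \cap \{\zeta \le s\}$ one has $N(t)-N(s)=M(t)-M(s)$, and $\mathbf 1_{B \cap \{\zeta \le s\}}\phi$ is $\mathcal F_s^0$-measurable, so the expectation vanishes because $M$ is a $\Proba$-martingale. On $\{\zeta>s\}$ one has $N(s)=0$ and $\mathbf 1_B\mathbf 1_{\{\zeta>s\}}\phi$ is $\mathcal F_\zeta^0$-measurable; since $N$ is a $\Proba$-martingale vanishing up to $\zeta$, optional sampling (a routine strengthening of \Thm{doob_time_stopping}) gives $\mathbb E^{\Proba}[N(t)\mid \mathcal F_\zeta^0]=N(t\wedge\zeta)=0$ on $\{\zeta\le t\}$, while $N(t)=0$ on $\{\zeta>t\}$, so this piece vanishes as well. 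Hence the displayed function is $\Proba$-almost everywhere zero; running this over a countable determining family of rational pairs $s<t$ and a countable algebra of bounded $\mathcal F_s^0$-measurable test functions, and using right-continuity of $N$, collects everything into a single $\Proba$-null set off which $N$ is a genuine $Q_{\hat\omega}$-martingale.

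The converse (ii)$\Rightarrow$(i) is the easy direction. Given that $M(\cdot\wedge\zeta)$ is a $\Proba$-martingale, it suffices to recover that $N$ is one. For fixed $s<t$ and bounded $\mathcal F_s^0$-measurable $\phi$, the assumed $Q_{\hat\omega}$-martingale property gives $\mathbb E^{Q_{\hat\omega}}[(N(t)-N(s))\phi]=0$ for $\Proba$-almost every $\hat\omega$; integrating this identity against $\Proba$ and invoking the disintegration yields $\mathbb E^{\Proba}[(N(t)-N(s))\phi]=0$, which is exactly the $\Proba$-martingale property of $N$. Adding back $M(\cdot\wedge\zeta)$ gives statement~(i).

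The main obstacle is the second half of (i)$\Rightarrow$(ii): the bookkeeping forced by the mismatch between the fixed-time filtration $\mathcal F_s^0$ and the stopping-time $\sigma$-algebra $\mathcal F_\zeta^0$. Concretely, the case $\{s<\zeta\le t\}$ is where the argument genuinely needs optional sampling together with the vanishing of $N$ before $\zeta$, and one must verify the measurability facts $B\cap\{\zeta\le s\}\in\mathcal F_s^0$ and $\mathbf 1_{\{\zeta>s\}}\phi\in\mathcal F_\zeta^0$ that legitimise the split. Equally delicate is the final step of producing a single exceptional null set valid simultaneously for all $s$, $t$ and all test functions, which relies on separability of $\mathcal P(M)$ (giving a countable determining class for $\mathcal F_s^0$) and on the path-continuity hypothesis to pass from rational to arbitrary times.
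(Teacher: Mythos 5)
The paper does not prove this lemma at all --- it is quoted verbatim from Stroock--Varadhan (their Theorem 1.2.10), so there is no in-paper proof to compare against. Your argument is a correct reconstruction of the standard proof from that source: the decomposition $M = M(\cdot\wedge\zeta) + N$, the disintegration identity $\mathbb{E}^{\Proba}[\mathbf{1}_B Z] = \int_B \mathbb{E}^{Q_{\hat\omega}}[Z]\,\dd\Proba$ for $B \in \mathcal{F}^0_\zeta$, the split along $\{\zeta\le s\}$ versus $\{\zeta>s\}$, and the countable determining class to extract a single null set are exactly the right ingredients. Two small imprecisions, neither fatal: the ``reduction'' you announce (``$N$ is a $\Proba$-martingale $\Leftrightarrow$ $N$ is a $Q_{\hat\omega}$-martingale a.e.'') is false for a general martingale $N$ and is only proved here because your $N$ vanishes up to $\zeta$ --- your actual argument does use this, so only the slogan is overstated; and the optional-sampling step $\mathbb{E}^{\Proba}[N(t)\mid\mathcal{F}^0_\zeta] = 0$ on $\{\zeta\le t\}$ should be phrased as conditioning on $\mathcal{F}^0_{t\wedge\zeta}$ (noting that sets of $\mathcal{F}^0_\zeta$ contained in $\{\zeta\le t\}$ lie in $\mathcal{F}^0_{t\wedge\zeta}$), since $\zeta$ itself need not be dominated by $t$.
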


Recall that two (local) operators $D$ and $\oS D$ agree on some
subsets if there is a measure preserving local isometry intertwining
$D$ and $\oS D$ (see \Defenum{forms.ops.agree}{agree.b}). 

We now formulate our first main theorem:
\begin{maintheorem}[Locality of the Wiener measure]
  \label{mthm:locality_wiener_measure}
  Let $(M,d,\mu)$ and $(\oS{M},\oS{d},\oS{\mu})$ be two metric measure spaces with non-negative
  self-adjoint operators $D$ and $\oS D$.
  Let $\Proba$ and $\oS \Proba$ be the associated Wiener measures.
  Assume that $D$ and $\oS D$ agree on some open subsets $U \subset M$
  and $\oS U \subset \oS M$, then
  \begin{align*}
    \Proba = \Proba^U \otimes_{\zeta_U} \Proba.
  \end{align*}
  In words, the spliced measure that uses the Wiener measure from $\oS
  M$ until the first exit from $U$ and the Wiener measure from $M$
  after the first exit time is identical to the original Wiener
  measure on $M$.
\end{maintheorem}

\begin{corollary}
  \label{cor:main1}
  Under the assumptions of the previous theorem we have
  \begin{align*}
    \Proba\restr{\mathcal{F}(U)} = \oS \Proba \restr{\mathcal{F}(\oS U)},
  \end{align*}
  i.e., when restricted to paths that stay inside $U$, the two Wiener
  measures are identical.
\end{corollary}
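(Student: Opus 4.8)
The plan is to read the Corollary off \Mthm{locality_wiener_measure} by observing that the splicing construction acts trivially on paths confined to $U$. Recall that $\mathcal{F}(U)$ collects the events determined by a path while it remains inside $U$; any such event concerns only times before the first exit from $U$ and is therefore measurable with respect to the stopped $\sigma$-algebra $\mathcal{F}_{\zeta_U}$. Fix a starting point $x \in U$; the goal is to show $\Proba_x(E) = \oS\Proba_{\psi(x)}(\psi_* E)$ for every $E \in \mathcal{F}(U)$, where $\psi_* E$ denotes the image event under the measure-preserving isometry $\psi$.

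First I would restrict the identity $\Proba = \Proba^U \otimes_{\zeta_U} \Proba$ supplied by \Mthm{locality_wiener_measure} to $\mathcal{F}_{\zeta_U}$. By the characterisation of the spliced measure noted just after \Def{spliced_measures}, one has $(\Proba^U \otimes_{\zeta_U} \Proba)\restr{\mathcal{F}_{\zeta_U}} = \Proba^U\restr{\mathcal{F}_{\zeta_U}}$, since the second factor $\Proba$ governs only the shifted paths after time $\zeta_U$, so that the post-exit behaviour — including the re-entry phenomenon flagged there — does not affect $\mathcal{F}_{\zeta_U}$-events. Combining this with the Main Theorem yields $\Proba_x\restr{\mathcal{F}_{\zeta_U}} = \Proba^U_x\restr{\mathcal{F}_{\zeta_U}}$, and in particular $\Proba_x(E) = \Proba^U_x(E)$ for every $E \in \mathcal{F}(U)$.

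It then remains to unfold the definition of $\Proba^U$, namely $\Proba^U_x(\omega(t) \in A) = \oS\Proba_{\psi(x)}(\psi(\omega(t)) \in \psi(A))$ for $A \subset U$, which exhibits $\Proba^U_x$ as the pullback of $\oS\Proba_{\psi(x)}$ under $\psi$ on cylinder events inside $U$; a monotone-class argument then extends this to all of $\mathcal{F}(U)$ and gives $\Proba^U_x(E) = \oS\Proba_{\psi(x)}(\psi_* E)$. Chaining the two equalities produces the desired $\Proba\restr{\mathcal{F}(U)} = \oS\Proba\restr{\mathcal{F}(\oS U)}$. I expect the only genuine work — and hence the main obstacle — to be the measurability bookkeeping: establishing the inclusion $\mathcal{F}(U) \subset \mathcal{F}_{\zeta_U}$ so that the spliced factor $\Proba$ provably drops out, and propagating the cylinder-set identity for $\Proba^U$ to the whole $\sigma$-algebra.
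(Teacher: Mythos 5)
Your argument is correct and coincides with the route the paper intends: the corollary is stated without a separate proof precisely because it is the restriction of $\Proba = \Proba^U \otimes_{\zeta_U} \Proba$ to $\mathcal{F}_{\zeta_U}$ combined with the defining property of the spliced measure and the definition of $\Proba^U$ as the pullback of $\oS\Proba$ under $\psi$. Your version merely makes explicit the measurability bookkeeping ($\mathcal{F}(U) \subset \mathcal{F}_{\zeta_U}$ and the monotone-class extension) that the paper leaves implicit, and that bookkeeping is sound.
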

\begin{proof}[Proof of \Mthm{locality_wiener_measure}]
  This proof is a generalization of a proof in~\cite{Stroock11} where Stroock
  shows the above theorem for $\R^n$ instead of metric spaces.

  We are going to show that the Markov process
  $X_t(\omega):=\omega(t)$ with the family of measures $\Proba^U
  \otimes_{\zeta_U} \Proba$ solves the martingale problem for
  $D$. Then uniqueness of the solution (\Thm{uniqueness_martingales})
  shows the equality of the measures. Thus we need to check that for
  all $f \in \dom D$ the map
  \begin{align*}
    M_f (t, \omega) = f(\omega(t)) - \int_0^t (D f)(\omega(s)) ds
  \end{align*}
  is a martingale with respect to $\Proba^U \otimes_{\zeta_U}
  \Proba$.

  We let $\oS f = f \circ \psi^{-1}$ and define
  $\oS M_{\oS f}$ analogously to $M_f$, hence
  $\oS M_{\oS f}(t,\oS \omega)$ is a
  martingale with respect to $\oS{X}_t$. Through the isometry
  $\psi$ we get $M_f(t \wedge \zeta_U(\omega), \omega)=
  \oS M_{\oS f}(t \wedge
  \zeta_{\psi(U)}(\oS \omega), \oS \omega)$ which
  is a martingale with respect to $\oS{X}_t$ by Doob's time
  stopping \Thm{doob_time_stopping}.

  Note that up to time $\zeta_U(\omega)$ the measures
  $\oS \Proba$ and $\Proba^U \otimes_{\zeta_U}
  \Proba$ are identical, so this implies that $M_f(t \wedge
  \zeta_U(\omega), \omega)$ is a martingale with respect to
  $\Proba^U \otimes_{\zeta_U} \Proba$ as well.

  $M_f(t,\omega) - M_f(t \wedge \zeta_U(\omega), \omega)$ is just the
  function $M_f(t,\omega)$ starting at time $\zeta_U(\omega)$.
   Hence $M_f(t,\omega) - M_f(t \wedge
  \zeta_U(\omega), \omega)$ is a martingale for a shifted version of
  some $\overline{\Proba}$ for $t \ge 0$ if and only if
  $M_f(t,\omega)$ is a martingale for $\overline{\Proba}$ for $t
  \ge \zeta_U(\omega)$. Here being a martingale for $t \ge
  \zeta_U(\omega)$ means that the conditionally constant property only
  holds for these $t$ and not for $t \ge 0$ as in the original
  definition. For $t \ge \zeta_U(\omega)$, the measure
  $\delta_\omega \otimes_{\zeta_U(\omega)}\Proba$ is the time
  shifted version of $\Proba$. Thus $M_f(t,\omega) - M_f(t \wedge
  \zeta_U(\omega), \omega)$ is a martingale with respect to
  $\delta_\omega \otimes_{\zeta_U(\omega)}\Proba$ if and only if
  $M_f(t,\omega)$ is a martingale with respect to $\Proba$. The
  latter is true by assumption.

  Next we have that $\delta_\omega
  \otimes_{\zeta_U(\omega)}\Proba$ is the conditional probability
  distribution of $\Proba^U \otimes_{\zeta_U} \Proba$ with
  respect to $\mathcal{F}_{\zeta_U(\omega)}$, which by definition
  means
  \begin{align*}
    (\Proba^U \otimes_{\zeta_U} \Proba)(\omega(s) \in A \cap
    B) 
    = \int_A (\delta_\omega \otimes_{\zeta_U(\omega)}\Proba)(\omega(s) \in B)
    \dd\Proba(\omega)
  \end{align*}
  for $A \in \mathcal{F}_t$ and $B \in \mathcal{F}$. This is just the
  definition of the spliced measure \Def{spliced_measures}.

  Hence we can apply \Lem{glueing_martingales} and conclude that
  $M_f$ is a martingale with respect to the measures $\Proba^U
  \otimes_{\zeta_U} \Proba$.
\end{proof}

\subsection{Locality of the heat kernel}
\label{sec:locality.heat.kernel}

Here we are going to prove that exponential decay of the heat kernel together with
the locality of the Wiener measure imply locality of the heat
kernel.

\begin{definition}
  \label{def:heat_kernel_decay}
  Let $(M,d,\mu)$ be a metric measure space. Let $\qf E$ be a strongly
  local regular Dirichlet form on it. Then we say the heat kernel satisfies an \emph{exponential 
  decay bound} if there exist constants $C,c>0$ and $n \ge 0$ such that 
  \begin{align*}
    p_t(x,y) \le C t^{-n/2} \exp\left(-\frac{d^2(x,y)}{ct}\right)
  \end{align*} 
  holds for all $x,y \in M$ and all $t$ as long as
  $0<t<T$ for some $T$.
\end{definition}
In most applications $n$ is the dimension of $M$.

\begin{theorem}[Existence of conditional Wiener
  measure~\cite{BaerPfaeffle11}]
  \label{thm:bp11}
  Let $(M,d,\mu)$ be a metric measure space. Assume the heat kernel
  $p$ satisfies an exponential decay bound as in \Def{heat_kernel_decay}.
  Then there exists a unique point-to-point Wiener measure
  $\Proba_x^y$ on the set $\Contsymb_x^y([0,T],M)$, that is the set of
  continuous paths with start point $x$ and end point $y$ at time $T$.
  Moreover, it satisfies
  \begin{align*}
    \Proba_x^y(\omega(t) \in U) = \int_U p_t(z,y)p_{T-t}(x,z)
    \dd\mu(z)
  \end{align*}
  for $t \in (0,T)$ and $U \in \mathcal{B}(M)$. It is compatible with
  the Wiener measure $\Proba_x$ in the sense that
  \begin{align*}
    \int_{\Contsymb_{x}([0,t],M)} f(\omega) \dd\Proba_{x}(\omega) =
    \int_{M} \int_{\Contsymb_{x}^{y}([0,t],M)} f(\omega)
    \dd\Proba_{x}^{y}(\omega) \dd\mu(y)
  \end{align*}
  for any function $\map f {\Contsymb_{x}([0,t],M)} \R$ that is
  integrable with respect to $\Proba_{x}$.
\end{theorem}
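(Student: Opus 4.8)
The plan is to build $\Proba_x^y$ from its finite-dimensional distributions, promote the resulting cylinder measure to a genuine measure on continuous paths via a moment estimate, and then read off the marginal formula, the compatibility with $\Proba_x$, and uniqueness. First I would prescribe, for $0<t_1<\dots<t_k<T$ and $A_1,\dots,A_k\in\mathcal{B}(M)$,
\[
\Proba_x^y\bigl(\omega(t_1)\in A_1,\dots,\omega(t_k)\in A_k\bigr)
:= \int_{A_1}\!\cdots\!\int_{A_k} p_{t_1}(x,z_1)
   \Bigl(\prod_{i=1}^{k-1} p_{t_{i+1}-t_i}(z_i,z_{i+1})\Bigr)
   p_{T-t_k}(z_k,y)\,\dd\mu(z_1)\cdots\dd\mu(z_k).
\]
The semigroup property of the heat kernel gives the Kolmogorov consistency conditions: setting $A_j=M$ collapses two consecutive kernels via $\int_M p_{t_j-t_{j-1}}(z_{j-1},z)\,p_{t_{j+1}-t_j}(z,z_{j+1})\,\dd\mu(z)=p_{t_{j+1}-t_{j-1}}(z_{j-1},z_{j+1})$, and taking all $A_i=M$ shows the total mass equals $p_T(x,y)$. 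Assuming $p_T(x,y)>0$ (otherwise $\Proba_x^y$ is the zero measure and nothing is to prove), the rescaled family $\Proba_x^y/p_T(x,y)$ is a consistent system of probability distributions, so the Kolmogorov extension theorem yields a unique measure on the product space $M^{(0,T)}$ with these marginals; rescaling by $p_T(x,y)$ recovers $\Proba_x^y$.

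The crux is to show this measure is carried by $\Contsymb_x^y([0,T],M)$, for which I would invoke the Kolmogorov--Chentsov continuity criterion. This needs a moment bound $\mathbb E^{\Proba_x^y}\bigl[d(\omega(s),\omega(t))^p\bigr]\le C\,|t-s|^{1+\varepsilon}$, and here the two-time marginal reads
\[
\mathbb E^{\Proba_x^y}\bigl[d(\omega(s),\omega(t))^p\bigr]
= \int_M\!\int_M d(z,w)^p\, p_s(x,z)\,p_{t-s}(z,w)\,p_{T-t}(w,y)\,\dd\mu(z)\,\dd\mu(w).
\]
The exponential decay bound of \Def{heat_kernel_decay} controls the middle factor: the Gaussian weight $\exp\bigl(-d(z,w)^2/(c(t-s))\bigr)$ concentrates $p_{t-s}(z,w)$ near $d(z,w)\lesssim\sqrt{t-s}$, so that $\int_M d(z,w)^p\,p_{t-s}(z,w)\,\dd\mu(w)\lesssim (t-s)^{p/2}$ and any $p>2$ gives $\varepsilon=p/2-1>0$. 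I expect \emph{this moment estimate to be the main obstacle}: extracting the $(t-s)^{p/2}$ scaling from an upper bound alone requires estimating the $\mu$-mass of the dyadic annuli $\{2^j\sqrt{t-s}\le d(z,w)<2^{j+1}\sqrt{t-s}\}$ and handling the terminal factor $p_{T-t}(w,y)$, which degenerates as $t\to T$. I would carry this out by a layer-cake decomposition together with the normalisation $\int_M p_\tau(z,w)\,\dd\mu(w)=1$ from the Markov property, if necessary treating times near $0$ and $T$ separately. Continuity of paths then also pins down the endpoints: combining the modulus-of-continuity bound with the approximation-of-identity property of the heat kernel forces $\omega(t)\to x$ as $t\to 0^+$ and $\omega(t)\to y$ as $t\to T^-$, so $\Proba_x^y$ is supported on $\Contsymb_x^y([0,T],M)$.

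Finally, the stated marginal $\Proba_x^y(\omega(t)\in U)=\int_U p_t(z,y)\,p_{T-t}(x,z)\,\dd\mu(z)$ is the $k=1$ instance of the construction (using symmetry of the kernel). Compatibility with $\Proba_x$ follows by disintegration over the endpoint: the finite-dimensional distributions of $\Proba_x$ differ from those above only by the terminal factor $p_{T-t_k}(z_k,y)$, and integrating this against $\dd\mu(y)$ over $M$ yields $1$ by the Markov property, recovering exactly the marginals of $\Proba_x$; since cylinder sets form a $\pi$-system generating the Borel $\sigma$-algebra on continuous paths, the identity extends from cylinder functions to all $\Proba_x$-integrable $f$ by a monotone-class argument. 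Uniqueness of $\Proba_x^y$ is immediate from the same generation property, as any measure on $\Contsymb_x^y([0,T],M)$ with the prescribed finite-dimensional distributions must agree with the constructed one on a generating $\pi$-system and hence everywhere.
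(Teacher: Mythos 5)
The paper does not actually prove this statement: it is imported from B\"ar--Pf\"affle (with the remark that the reference needs an even weaker decay bound), so there is no internal proof to compare against. Your construction --- prescribing the finite-dimensional distributions as products of heat kernels with the terminal weight $p_{T-t_k}(z_k,y)$, checking Kolmogorov consistency via the semigroup property, extending to $M^{(0,T)}$, and upgrading to a measure carried by $\Contsymb_x^y([0,T],M)$ via Kolmogorov--Chentsov --- is exactly the standard route and essentially what the cited reference does; the one-time marginal formula, the disintegration over the endpoint $y$, and the $\pi$-system uniqueness argument are all correct as sketched. Two remarks on the step you rightly flag as the crux. First, under only the stated hypotheses (a compact metric measure space plus the bound of \Def{heat_kernel_decay}, with no upper volume regularity), the dyadic-annulus estimate controls each annulus at scale $2^j\sqrt{t-s}$ only by $\mu(M)$, so one obtains $\mathbb{E}^{\Proba_x^y}\bigl[d(\omega(s),\omega(t))^p\bigr]\le C_\delta\,(t-s)^{(p-n)/2}$ for $s,t\le T-\delta$ rather than the $(t-s)^{p/2}$ you state; this still closes the argument by choosing $p>n+2$, and the cleaner exponent would require the Ahlfors regularity that the paper only imposes later for manifold-like spaces. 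Second, the degeneration of the factor $p_{T-t}(w,y)$ as $t\to T$ is genuinely unavoidable, but is handled at no cost by the symmetry $p_t(x,y)=p_t(y,x)$, which exhibits $\Proba_x^y$ as the time reversal of $\Proba_y^x$ and reduces path regularity near $T$ to regularity near $0$; together with your observation that the approximation-of-identity property pins the endpoints, this completes the proof.
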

 
\begin{remark}
  Note that the definition of $\Proba_x^y$ immediately implies
  \begin{align*}
    \Proba_x^y(\Contsymb_x^y([0,t],M)) = p_t(x,y).
  \end{align*}
  \Thm{bp11} cited above from~\cite{BaerPfaeffle11} requires a decay
  bound of the heat kernel that is much weaker than the one we use
  here.
\end{remark}

\begin{definition}
  For $U \subseteq M$ be open and $p_t$ a heat kernel on $M$, let
  \begin{align*}
    p_t^U(x,y):= \Proba_x^y(\Contsymb_x^y([0,t],U)).
  \end{align*}
  This means $p_t^U$ kills off all paths that leave the set $U$ and
  corresponds to the heat kernel on $U$ with Dirichlet boundary
  conditions. In particular $p_t^U(x,y) \le p_t(x,y)$.
\end{definition}

\begin{proposition}
  Let $(M,d,\mu)$ and $(\oS M, \oS d, \oS \mu)$ be two metric measure
  spaces with non-negative self-adjoint operators $D$ and $\oS D$ and let $p_t$ and $\oS p_t$ be the associated heat kernels.
  Assume $D$ and $\oS D$ agree on some
  open subsets $U$ and $\oS U$ via a measure preserving isometry $\map
  \psi U {\oS U}$. 
  Then 
  \begin{align*}
   p_t^U(x,y) = p_t^{\prime \psi(U)}(\psi(x),\psi(y)).
  \end{align*}
\end{proposition}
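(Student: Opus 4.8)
The plan is to deduce the identity from the locality of the Wiener measure in \Cor{main1} (a consequence of \Mthm{locality_wiener_measure}), using the disintegration of the Wiener measure over its terminal point provided by \Thm{bp11}. Both sides of the claimed equality involve $\psi(x)$ and $\psi(y)$, so the statement concerns $x,y \in U$; I fix such a point $x$. The quantity $p_t^U(x,y) = \Proba_x^y(\Contsymb_x^y([0,t],U))$ is read off from the point-to-point measure $\Proba_x^y$, which by the compatibility formula of \Thm{bp11} is precisely the disintegration of the full Wiener measure $\Proba_x$ along the endpoint. Since \Cor{main1} already matches $\Proba_x$ and $\oS\Proba_{\psi(x)}$ on paths that never leave $U$ resp.\ $\oS U$, I would test both measures against functionals depending only on such confined paths and their terminal point, and then read off the pointwise identity of the two densities.

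Concretely, write $A_U$ for the event $\{\omega(s) \in U \text{ for all } s \in [0,t]\}$, which is measurable. For a bounded measurable $g$ with $\supp g \subset U$, I would feed the bounded (hence $\Proba_x$-integrable) functional $f(\omega) = \mathbf{1}_{A_U}(\omega)\, g(\omega(t))$ into the compatibility identity of \Thm{bp11}. Since $\omega(t) = y$ holds $\Proba_x^y$-almost surely and $\Proba_x^y(A_U) = p_t^U(x,y)$, this gives
\begin{align*}
  \mathbb{E}^{\Proba_x}\bigl[\mathbf{1}_{A_U}\, g(\omega(t))\bigr]
  = \int_U g(y)\, p_t^U(x,y) \dd\mu(y) .
\end{align*}

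The transfer step is the heart of the argument. On $A_U$ the terminal point lies in $\supp g \subset U$, and off $A_U$ the functional vanishes, so $\mathbf{1}_{A_U}\, g(\omega(t))$ depends only on the portion of $\omega$ inside $U$; this is exactly the kind of functional on which \Cor{main1} identifies the two Wiener measures. Pushing forward by $\psi$ (so that $A_U$-paths become $A_{\oS U}$-paths and $g(\omega(t)) = \oS g(\psi(\omega(t)))$ with $\oS g := g \circ \psi^{-1}$), \Cor{main1} rewrites the left-hand side as $\mathbb{E}^{\oS\Proba_{\psi(x)}}[\mathbf{1}_{A_{\oS U}}\, \oS g(\oS\omega(t))]$. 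Applying \Thm{bp11} on $\oS M$ and then changing variables $\oS y = \psi(y)$ — here the hypothesis that $\psi$ is measure preserving yields $\psi_*(\mu\restr U) = \oS\mu\restr{\oS U}$ together with $\oS g(\psi(y)) = g(y)$ — turns this into $\int_U g(y)\, p_t^{\prime\psi(U)}(\psi(x),\psi(y)) \dd\mu(y)$. Comparing the two expressions for every admissible $g$ forces $p_t^U(x,y) = p_t^{\prime\psi(U)}(\psi(x),\psi(y))$ for $\mu$-almost every $y \in U$; since this holds for each fixed $x$ and heat kernels are defined only up to $\mu$-null sets, the equality holds for $\mu\times\mu$-almost every pair, as asserted.

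The routine ingredients are the measurability of $A_U$ and the boundedness needed to invoke \Thm{bp11}. The step I expect to require the most care is the transfer: one must verify that $\mathbf{1}_{A_U}\, g(\omega(t))$ genuinely lies in the class of $U$-confined functionals to which \Cor{main1} applies, and that the measure-preserving change of variables under $\psi$ is precisely what converts the endpoint density at $y$ into the density at $\psi(y)$. Getting this interplay between the disintegration of \Thm{bp11} and the confined-path identity of \Cor{main1} set up correctly is the crux; once it is in place, the arbitrariness of $g$ closes the argument.
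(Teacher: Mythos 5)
Your proposal is correct and is exactly the paper's argument: the paper's proof consists of the single line ``Using Theorem~\ref{thm:bp11} this is exactly the statement of Corollary~\ref{cor:main1}'', and your write-up simply fills in the details of how the disintegration/compatibility formula of Theorem~\ref{thm:bp11} converts the identity of the restricted Wiener measures from Corollary~\ref{cor:main1} into the pointwise ($\mu$-a.e.) identity of the Dirichlet kernels. No further comparison is needed.
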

\begin{proof}
Using \Thm{bp11} this is exactly the statement of \Cor{main1}.
\end{proof}

The following lemma is based on an argument of~\cite{Hsu95}. The authors are
indebted to Batu G\"uneysu for providing us with this reference and
useful comments on the proof.

\begin{lemma}
\cite{Hsu95}
  \label{lem:heat_kernel_decomposition}
  Let $\zeta := \zeta_U$ be the time stopping function for the first
  exit time from the open set $U \subseteq M$. Let $x,y \in U$.  Then
  the following decomposition of the heat kernel
  \begin{align*}
    p_t(x,y) 
    = p_t^U(x,y) + \int_{\{ \omega \in \mathcal{P}_x(M) | \zeta(\omega) \le t\}} 
       p_{t- \zeta(\omega)}(\omega(\zeta(\omega)), y) \dd\mathbb{P}_x(\omega)
  \end{align*}
  holds for $\mu$-almost all $x,y \in U$.
\end{lemma}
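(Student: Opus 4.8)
The plan is to reduce the claimed pointwise (almost everywhere) identity to an identity about the unconditioned Wiener measure $\Proba_x$ by testing against the endpoint. Concretely, I would fix $x \in U$, take an arbitrary Borel set $A \subseteq U$, and integrate both sides against $\dd\mu(y)$ over $A$. Using \Thm{markov_transition} on the left, $\int_A p_t(x,y)\dd\mu(y) = \Proba_x(X_t \in A)$ is the total Wiener mass of paths sitting in $A$ at time $t$. Since $\zeta := \zeta_U$ is exactly the first exit time from $U$, the events $\{\zeta > t\}$ (the path never leaves $U$ on $[0,t]$) and $\{\zeta \le t\}$ partition path space, and the two summands on the right-hand side should match these two contributions after integration over $A$.

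For the $\{\zeta > t\}$ part I would invoke the compatibility (disintegration) relation of \Thm{bp11}. Applying it to the integrand $\chi_A(\omega(t))\cdot\chi_{\{\omega \text{ stays in } U\}}(\omega)$ fixes the endpoint at $y$ in the inner integral and gives
\[
  \Proba_x\bigl(X_t \in A,\ \zeta > t\bigr)
  = \int_A \Proba_x^y\bigl(\Contsymb_x^y([0,t],U)\bigr)\dd\mu(y)
  = \int_A p_t^U(x,y)\dd\mu(y),
\]
which is precisely the integral over $A$ of the first summand. For the $\{\zeta \le t\}$ part I would apply the strong Markov property of the continuous Hunt process $X_t$ (available by \Def{hunt-process}) at the stopping time $\zeta$. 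On $\{\zeta \le t\}$ one writes $X_t = X_{\zeta + (t-\zeta)}$ and conditions on $\mathcal{F}_\zeta$ to obtain
\[
  \Proba_x\bigl(X_t \in A,\ \zeta \le t\bigr)
  = \int_{\{\zeta \le t\}} \Proba_{\omega(\zeta(\omega))}\bigl(X_{t-\zeta(\omega)} \in A\bigr)\dd\Proba_x(\omega).
\]
Rewriting the inner probability via \Thm{markov_transition} as $\int_A p_{t-\zeta(\omega)}(\omega(\zeta(\omega)),y)\dd\mu(y)$ and applying Fubini reproduces the integral over $A$ of the second summand.

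Adding the two pieces yields
\[
  \int_A p_t(x,y)\dd\mu(y)
  = \int_A \Bigl[ p_t^U(x,y) + \int_{\{\zeta \le t\}} p_{t-\zeta(\omega)}(\omega(\zeta(\omega)),y)\dd\Proba_x(\omega) \Bigr]\dd\mu(y)
\]
for every Borel $A \subseteq U$. Because both integrands are $\mu$-integrable and the equality holds for all such $A$, the integrands must agree for $\mu$-almost every $y \in U$. Running this for each fixed $x \in U$ and recalling that heat kernels are defined only up to $\mu\times\mu$-null sets then upgrades the statement to the claimed equality for $\mu$-almost all $x,y \in U$.

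The step I expect to be the main obstacle is the rigorous application of the strong Markov property at the random time $\zeta$: one must verify that $\zeta_U$ is a genuine stopping time (granted by \Def{stopping-time}), that $\omega(\zeta(\omega))$ and the residual time $t-\zeta(\omega)$ are $\mathcal{F}_\zeta$-measurable so that the strong Markov identity applies with an $\mathcal{F}_\zeta$-measurable elapsed time, and that $\omega \mapsto p_{t-\zeta(\omega)}(\omega(\zeta(\omega)),y)$ is measurable — otherwise neither the integral appearing in the lemma nor the subsequent Fubini interchange is justified. A secondary, more bookkeeping point is that every equality above holds only $\mu$- (respectively $\mu\times\mu$-) almost everywhere, since $p_t$, $p_t^U$ and the transition densities are themselves only defined up to null sets; it is exactly the freedom to vary the test set $A$ that converts the integrated identity into the almost-everywhere pointwise one.
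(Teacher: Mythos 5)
Your proposal is correct and follows essentially the same route as the paper's proof: test the identity against the endpoint (the paper integrates against $f\in\Cont M$ with $\supp f\subset U$ rather than indicators of Borel sets $A\subseteq U$, which is an immaterial difference), split path space by the exit time $\zeta_U$, identify the $\{\zeta>t\}$ contribution with $p_t^U$ via the disintegration of \Thm{bp11}, and handle the $\{\zeta\le t\}$ contribution with the strong Markov property at $\zeta$ followed by Fubini. The measurability caveats you flag are the right ones, and the paper treats them in the same implicit way.
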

This can be interpreted as follows. The set of all paths from $x$ to
$y$ in time $t$ is decomposed into the set of paths that stay inside
$U$ and those that do not. The paths that leave the set $U$ can be
represented as an integral using the time $\zeta(\omega)$ and place
$\omega(\zeta(\omega))$ where they leave the set $U$ for the first
time.

If the heat kernel is continuous as a function of $x$ and $y$ one can replace the $\mu$-almost all $x,y \in U$ by all $x,y \in U$.
\begin{proof}
  Let $f \in \Cont M$ be such that $\supp f \subset U$. Then we have
  \begin{align*}
    & \int_U f(y) p_t(x,y) \dd \mu(y)\\
    =& \int_{\mathcal{P}_x(M)} f(\omega(t)) \dd\mathbb{P}_x(\omega) \\
    =& \int_{\zeta > t} f(\omega(t)) \dd\mathbb{P}_x(\omega) 
    + \int_{\zeta \le t} f(\omega(t)) \dd\mathbb{P}_x(\omega) \\
    =& \int_U f(y) p_t^U(x,y) \dd\mu(y) 
    + \mathbb{E}^x[ f(\omega(t))\chi_{\zeta \le t}(\omega) ]
  \end{align*}
  by the definition of $p^U$. Here $\chi_{\zeta \le t}$ denotes the
  characteristic function of the set $\{\zeta \le t\}$ in
  $\mathcal{P}_x(M)$.

  Using the substitution $\tau:=t-\zeta$ we can write
  \begin{align*}
    & \mathbb{E}^x[ f(\omega(t))\chi_{\zeta \le t}(\omega) ] \\
    =& \mathbb{E}^x[ f(\omega(\tau+\zeta))\chi_{\tau \ge 0}(\omega) ] \\
    =& \mathbb{E}^x\left[\chi_{\tau\ge 0}(\omega) 
        \mathbb{E}^{\mathcal{F}_{\zeta}}\left[f(\omega(\tau+\zeta)\right]\right]  \\
    =& \mathbb{E}^x\left[\chi_{\tau\ge 0}(\omega) \mathbb{E}^{\zeta}\left[f(\omega(\tau)\right]\right] \\
    =& \int_{\mathcal{P}_x(M)}\chi_{\tau\ge 0}(\omega) 
         \mathbb{E}^{\zeta}[f(\omega(\tau)] d\mathbb{P}_x(\omega)
  \end{align*}
  where we first used the fact that the condition expectation
  $\mathbb{E}^{\mathcal{F}_{\zeta}}$ is just the identity projection
  in this case and then applied the strong Markov property (see
  \Def{strong-markov}).

  We have 
  \begin{align*}
    & \int_{\mathcal{P}_x(M)} \mathbb{E}^{\zeta}[f(\omega(\tau))]
    \chi_{\tau \ge 0}(\omega) \dd \mathbb{P}^x(\omega) \\
    =& \int_{\mathcal{P}_x(M)} \mathbb{E}^{\zeta}[f(\omega(t-\zeta))]
    \chi_{\zeta \le t}(\omega) \dd\mathbb{P}^x(\omega) \\
    =& \int_{\mathcal{P}_x(M)} \int_{\mathcal{P}_x(M)} 
    f(\tilde{\omega}(t-\zeta)) \dd\mathbb{P}_{\omega(\zeta)}(\tilde{\omega}) 
    \chi_{\zeta \le t}(\omega) \dd\mathbb{P}^x(\omega) \\
    =& \int_{\mathcal{P}_x(M)} \int_M p_{t-\zeta}(\omega(\zeta),y)f(y)\dd\mu(y) 
    \chi_{\zeta \le t}(\omega) \dd\mathbb{P}^x(\omega) \\
    =&\int_U \int_{\{ \omega \in \mathcal{P}_x(M) | \zeta(\omega) \le t\}} 
    p_{t- \zeta}(\omega(\zeta), y) \dd\mathbb{P}_x(\omega)f(y) \dd\mu(y) 
  \end{align*}
  where we used Fubini's theorem in the last step.

    This holds for all $f \in \Cont M$ with $\supp f \subset U$, hence
  we proved the lemma for $\mu$-almost all $y$.
\end{proof}

\begin{lemma}
  \label{lem:nonlocal_wiener_measure_bound}
  Let $(M,d,\mu)$ be a metric measure space. Let $p_t$ be a heat kernel
  on it. Assume the heat kernel satisfies a decay bound as in
  \Def{heat_kernel_decay}. Let $U \subset M$ be open.
  Then for $\mu$-almost all $x,y \in U$ we have
  \begin{align*}
    \Proba_x^{y}\left(\Contsymb^{y}_x\left([0,t],M\right) 
      \setminus \Contsymb^{y}_x\left([0,t],U\right)\right)  
    < C t^{-\frac{n}{2}} \e^{-\rho^2/(ct)}
  \end{align*}
   where 
  \begin{align*}
    \rho:= \inf d(\{x,y\}, \bd U)
  \end{align*}
  is the infimum of the distance of $x$ and $y$ from the boundary and
  $C,c> 0$ are some constants that are independent of $x,y$ for all $x,y$ such that $\rho$ is bounded away from zero.
\end{lemma}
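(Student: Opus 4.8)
The plan is to combine the pathwise decomposition of \Lem{heat_kernel_decomposition} with the exponential decay bound of \Def{heat_kernel_decay}. First I would rewrite the left-hand side in terms of heat kernels. Since the point-to-point measure has total mass $p_t(x,y) = \Proba_x^y(\Contsymb_x^y([0,t],M))$ (the remark following \Thm{bp11}) and since $p_t^U(x,y) = \Proba_x^y(\Contsymb_x^y([0,t],U))$ by definition, the measure of the exiting paths is exactly
\begin{align*}
  \Proba_x^y\bigl(\Contsymb_x^y([0,t],M) \setminus \Contsymb_x^y([0,t],U)\bigr) = p_t(x,y) - p_t^U(x,y).
\end{align*}
By \Lem{heat_kernel_decomposition} this difference equals $\int_{\{\zeta \le t\}} p_{t-\zeta(\omega)}(\omega(\zeta(\omega)),y)\,\dd\Proba_x(\omega)$, where $\zeta=\zeta_U$, so it suffices to bound this integral.

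The second step is to control the integrand. Because $U$ is open and the paths are continuous, the first exit point lies on the boundary, $\omega(\zeta(\omega)) \in \bd U$, whenever $\zeta(\omega) \le t$: for $s<\zeta$ one has $\omega(s)\in U$, so $\omega(\zeta)\in\clo U$ by continuity, while exit times accumulate at $\zeta$ from above, forcing $\omega(\zeta)\notin U$. As $y \in U$ with $d(y,\bd U) \ge \rho$, this gives $d(\omega(\zeta),y) \ge \rho$, and the decay bound yields
\begin{align*}
  p_{t-\zeta}(\omega(\zeta),y) \le C (t-\zeta)^{-n/2} \e^{-\rho^2/(c(t-\zeta))}.
\end{align*}

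The third step is to show that the right-hand side is dominated by $Ct^{-n/2}\e^{-\rho^2/(ct)}$. Writing $g(s):= s^{-n/2}\e^{-\rho^2/(cs)}$ and differentiating, one checks that $(\ln g)'(s)=s^{-2}(\rho^2/c-ns/2)$, so $g$ is increasing on $(0, 2\rho^2/(nc))$; hence, provided $t$ is small enough relative to $\rho$ (guaranteed for $t<T$ after shrinking $T$, using that $\rho$ is bounded away from zero), the argument $s = t-\zeta \in (0,t]$ stays in the increasing regime and $g(t-\zeta) \le g(t)$. The integrand is then bounded by the $\omega$-independent quantity $Ct^{-n/2}\e^{-\rho^2/(ct)}$, and integrating over $\{\zeta \le t\}$ against the probability measure $\Proba_x$ gives
\begin{align*}
  \int_{\{\zeta \le t\}} p_{t-\zeta}(\omega(\zeta),y)\,\dd\Proba_x(\omega) \le Ct^{-n/2}\e^{-\rho^2/(ct)}\,\Proba_x(\{\zeta \le t\}) \le Ct^{-n/2}\e^{-\rho^2/(ct)}.
\end{align*}
The inequality is strict since $\Proba_x(\{\zeta \le t\}) < 1$: for $x$ at distance at least $\rho$ from $\bd U$ and $t$ small there is positive probability that a path never leaves $U$ before time $t$.

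I expect the main obstacle to be the third step — reconciling the blow-up of the prefactor $(t-\zeta)^{-n/2}$ as $\zeta \to t$ with the exponential factor, and in particular pinning down the range of $t$ (equivalently, the dependence of the admissible $T$ on the lower bound for $\rho$) for which the clean bound with the original constants $C,c$ holds. A more uniform alternative is to enlarge the decay constant to some $c' > c$ and absorb the prefactor into a larger $C'$, which removes the restriction on $t$ at the cost of worse constants. The identification $\omega(\zeta) \in \bd U$ is routine but should be stated explicitly, as it is what converts the pathwise distance lower bound into the single quantity $\rho$ appearing in the exponent.
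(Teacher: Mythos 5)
Your proposal is correct and follows essentially the same route as the paper: rewrite the left-hand side as $p_t(x,y)-p_t^U(x,y)$, apply the decomposition of \Lem{heat_kernel_decomposition}, bound the integrand via the decay estimate and $d(\omega(\zeta),y)\ge\rho$, and then compare $(t-\zeta)^{-n/2}\e^{-\rho^2/(c(t-\zeta))}$ with $t^{-n/2}\e^{-\rho^2/(ct)}$. The only (harmless) divergence is in that last comparison: you use monotonicity of $s\mapsto s^{-n/2}\e^{-\rho^2/(cs)}$ on its increasing range, which forces you to shrink $T$ or enlarge the constants as you note, whereas the paper exploits unimodality to get $f(s)<\tfrac{f_{\max}}{f(T)}f(t)$ for all $0<s<t<T$, paying only the uniform constant $f_{\max}/f(T)$.
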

This is a bound on the set of paths from $x$ to $y$ in time $t$ that
leave the set $U$. If the set $U$ is geodesically convex, these paths
are longer than the distance realizing paths.
\begin{proof}
  We have
  \begin{align*}
    & \Proba_x^{y}\left(\Contsymb^{y}_x\left([0,t],M\right) 
      \setminus \Contsymb^{y}_x\left([0,t],U\right)\right) \\
    =& p_t(x,y) - p_t^U(x,y) \\
    =& \int_{\{ \omega \in \mathcal{P}_x(M) | \zeta(\omega) \le t\}} 
    p_{t- \zeta}(\omega(\zeta(\omega)), y) \dd\mathbb{P}_x(\omega)\\
    \le& C\int_{\{ \omega \in \mathcal{P}_x(M) | \zeta(\omega) \le t\}} 
      (t- \zeta(\omega))^{-\frac{n}{2}} e^{-d(\omega(\zeta(\omega)), y)^2/(c(t-\zeta(\omega))} \dd\mathbb{P}_x(\omega)\\
    \le& C\int_{\{ \omega \in \mathcal{P}_x(M) | \zeta(\omega) \le t\}} 
      (t- \zeta(\omega))^{-\frac{n}{2}} e^{-d(\partial U, y)^2/(c(t-\zeta(\omega))} \dd\mathbb{P}_x(\omega)
  \end{align*}
  where we used the heat kernel decomposition from
  \Lem{heat_kernel_decomposition} and then the heat kernel decay bound. 
  
  Let $f(t) := t^{-n/2}e^{-\alpha/t}$ with $\alpha>0$ and let $T>0$ be fixed. Then for any $0 < s < t < T$ we have 
  \begin{align*}
   f(s) < \frac{f_{\max}}{f(T)} f(t)
  \end{align*}
  where $f_{\max}$ denotes the unique maximum of the function $f$.
  
  Plugging in this estimate with $s= t-\zeta(\omega)$ we get
  \begin{align*}
   & \Proba_x^{y}\left(\Contsymb^{y}_x\left([0,t],M\right) 
      \setminus \Contsymb^{y}_x\left([0,t],U\right)\right) \\
    \le & C \frac{f_{\max}}{f(T)} t^{-\frac{n}{2}} \e^{-d(\partial U,y)^2/(ct)} \mathbb{P}_x(\zeta \le t) \\
    \le & C' t^{-\frac{n}{2}} \e^{-d(\partial U,y)^2/(ct)}
  \end{align*}
  The constant $f_{\max}/f(T)$ depends on $y$ but if one restricts to values of $y$ such that $d(\partial U, y)$ is bounded away from zero one can pick a universal constant $C'$ that works for all such $y$. 
  
  By symmetry we can get the same estimate with $d(\partial U,x)$.
\end{proof}

Recall again that two (local) operators $D$ and $\oS D$ agree on some
subsets if there is a measure preserving local isometry intertwining
$D$ and $\oS D$ (see \Defenum{forms.ops.agree}{agree.b}).

We now state our second main result:
\begin{maintheorem}[Locality of the heat kernel]
  \label{mthm:locality_heat_kernel}
  Let $(M,d,\mu)$ and $(\oS M, \oS d, \oS \mu)$ be two metric measure
  spaces with non-negative self-adjoint operators $D$ and $\oS D$.
  Assume $D$ and $\oS D$ agree on some open subsets $U$ and $\oS U$
  via a measure preserving isometry $\map \psi U {\oS U}$.  Assume in
  addition that the associated heat kernels $p_t$ and $\oS p_t$ each
  satisfy an exponential decay bound as stated in \Def{heat_kernel_decay}. 
  Let $V$ be open with $\clo V \subset U$ and let $x,y \in V$. Then
  \begin{align*}
    \abs{p_t(x,y) - \oS p_t(\psi(x), \psi(y))} 
    \le C\e^{- \eps/t}
  \end{align*}
  for $\mu$-almost all $x,y \in V$ and all $t\in (0,T]$, where the constants $C$
  and $\eps$ depend only on $U,V$ and $T$, but not on $x$, $y$ or $t$.
\end{maintheorem}
\begin{proof}
  We can write the heat kernel $p_t(x,y)$ with the help of the Wiener
  measure and separate the set of paths from $x$ to $y$ into the local
  part that stays in $U$ and the part that leaves $U$ as in
  \Lem{heat_kernel_decomposition}. As $\psi$ is an isometry, the set
  of local paths in $U$ is the same as the local paths in $\oS{U}$. By
  \Mthm{locality_wiener_measure} the Wiener measures on these sets are
  also identical. Hence the heat kernels $p_t$ and $\oS{p}_t$ differ
  only by the Wiener measures of the non-local paths. Let $\bar \rho:=
  \inf (d(\bd V, \bd U))$. Then $\bar\rho>0$ because we assumed that
  the closure of $V$ is contained in $U$ and $\bar \rho$ is the
  infimum over the $(x,y)$-dependent $\rho$ in
  \Lem{nonlocal_wiener_measure_bound} taken over all $x,y \in
  V$. Hence if we apply \Lem{nonlocal_wiener_measure_bound} we get the
  estimate
  \begin{align*}
    \abs{p_t(x,y) - \oS p_t(\psi(x), \psi(y))} 
    \le 2Ct^{-\frac{n}{2}}\e^{- \bar{\rho}^2/ct}
  \end{align*}
  
  One can remove the $t^{-n}$ term by using the following elementary
  estimate. For all $\alpha >0$ and all $0 < b <a$ there exists a
  $C>0$ such that
  \begin{align*}
    t^{-\alpha}\e^{-a/t} < C \e^{-b/t}  
  \end{align*}
  holds for all $t>0$. 
\end{proof}

\begin{corollary}
  \label{cor:main2}
  Under the assumptions of \Mthm{locality_heat_kernel}, the asymptotic
  expansions for $p_t$ and $\oS p_t$ are identical over $V$, i.e.,
  \begin{align*}
    \Bigabs{
      \int_V p_t(x,x) \dd \mu(x)
      - \int_{\psi(V)}\oS p_t(\oS x,\oS x)\dd \oS \mu(\oS x)}
    \to_{t \to 0^+} 0 + \tilde{C}e^{-\eps / t}.
  \end{align*}
\end{corollary}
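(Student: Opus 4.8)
The plan is to deduce the estimate directly from \Mthm{locality_heat_kernel} by a change of variables along the diagonal. Since the isometry $\map \psi U {\oS U}$ is measure preserving and restricts to a measure preserving isometry from $V$ onto $\psi(V)$, I would first rewrite the second integral as an integral over $V$:
\begin{align*}
  \int_{\psi(V)} \oS p_t(\oS x, \oS x) \dd \oS \mu(\oS x)
  = \int_V \oS p_t(\psi(x), \psi(x)) \dd \mu(x).
\end{align*}
This recasts the difference of the two traces as a single integral over $V$ of the pointwise difference $p_t(x,x) - \oS p_t(\psi(x),\psi(x))$.

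Next, I would pull the absolute value inside the integral by the triangle inequality and bound the integrand pointwise. Applying \Mthm{locality_heat_kernel} along the diagonal $y = x$ gives
\begin{align*}
  \abs{p_t(x,x) - \oS p_t(\psi(x),\psi(x))} \le C \e^{-\eps/t}
\end{align*}
for $\mu$-almost all $x \in V$ and all $t \in (0,T]$, with $C$ and $\eps$ depending only on $U$, $V$ and $T$. Integrating this uniform bound over $V$ and using that $\mu(V) < \infty$ (as $M$ is compact and $\mu$ is Radon), I would set $\tilde C := C\,\mu(V)$ to obtain the claimed bound $\tilde C \e^{-\eps/t}$. The assertion that the short-time asymptotic expansions agree over $V$ is then immediate, since $\tilde C \e^{-\eps/t}$ decays faster than every power of $t$ as $t \to 0^+$ and hence cannot contribute to any term of a power-type expansion.

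The argument is short because all the analytic work has already been done in \Mthm{locality_heat_kernel}; the only points requiring a little care are the measurability and integrability of the diagonal map $x \mapsto p_t(x,x)$ and the justification of the change of variables under $\psi$. Both follow from $\psi$ being a measure preserving isometry together with the measurability axiom in the definition of a heat kernel, so I do not expect any genuine obstacle here.
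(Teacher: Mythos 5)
Your argument is correct and is exactly the one the paper leaves implicit: \Cor{main2} is stated without proof as an immediate consequence of \Mthm{locality_heat_kernel}, and your change of variables via the measure preserving isometry $\psi$ followed by integration of the uniform pointwise bound over the finite-measure set $V$ (giving $\tilde C = C\mu(V)$) is the intended route. The one point worth flagging is that the bound in \Mthm{locality_heat_kernel} holds for $\mu$-almost all pairs $(x,y) \in V\times V$ while the diagonal is a $\mu\otimes\mu$-null set, so to restrict to $y=x$ one should either invoke continuity of the heat kernels (available in the paper's setting via the parabolic Harnack inequality) or pass through the semigroup identity $p_t(x,x)=\int_M p_{t/2}(x,z)^2\,\dd\mu(z)$ --- a gap the paper itself glosses over, so your proof matches its level of rigour.
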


\begin{remark}
  We believe that \Mthm{locality_heat_kernel} is sharp in the sense
  that some exponential decay bound on the heat kernel is needed for
  locality of the heat kernel to hold. Grigoryan~\cite{Grigoryan03}
  considers heat kernel estimates for very general metric spaces. He
  shows that some fractals satisfy heat kernel estimates of the form
  \begin{equation*}
    p_t(x,y) 
    \le Ct^{-c_1}\exp\Bigl(-\frac{d(x,y)^{c_2}}{t^{c_3}}\Bigr)
  \end{equation*}
  for some suitable constants. One can probably extend
  \Lem{nonlocal_wiener_measure_bound} and \Mthm{locality_heat_kernel}
  to this setting. 
  
  However, he also shows that the heat kernel for the
  operator $(-\partial_x^2-\partial_y^2)^{\frac{1}{2}}$ on subsets of
  $\R^2$ with reasonably nice boundary satisfies a non-exponential
  decay bound of the form $1/C(t^2+td(x,y)) \le p_t(x,y) \le C/(t^2+td(x,y))$ and one can easily show that these heat kernels do not satisfy locality in the sense of
  \Mthm{locality_heat_kernel}.
\end{remark}

%
\section{Manifold-like spaces}
\label{sec:the.space}
%

In this section, we will define manifold-like spaces. They provide a rich class of examples where the conditions for locality can be explicitly checked and proven. We start with
metric measure spaces which satisfy the measure contraction property
(MCP), a concept first introduced in~\cite{Sturm98}. Then we define a
manifold-like space as a quotient of an MCP space with only a finite
number of points in each equivalence class being identified (and some
other conditions).
\subsection{The measure contraction property}

We need a few more notions from metric geometry.  For details we refer
to the book~\cite{BBI01}.  Let $(M,d)$ be a metric space and $\gamma$
a path in $M$, i.e., a continuous map $\map \gamma {[a,b]} M$ with
$a<b$.  For a finite number of points $T:=\{t_0,\dots,t_N\}$ with
$t_0=a<t_1<\dots<t_N=b$ let
\begin{equation*}
  L_d(\gamma,T):=\sum_{j=1}^N d(\gamma(t_{j-1}),\gamma(t_j)).
\end{equation*}
The \emph{length} $L_d(\gamma)$ of $\gamma$ is defined as the supremum
of $L(\gamma,T)$ over all partitions $T$ of $[a,b]$. The path $\gamma$
is called \emph{rectifiable} if $L_d(\gamma)$ is finite.

For a 
subset $M_0 \subseteq M$ we define the \emph{intrinsic metric
  $d_{M_0}(x,y)$ of $M_0$ in $M$} as the infimum of $L_d(\gamma)$ over
all rectifiable paths $\gamma$ from $x$ to $y$ which stay entirely in
$M_0$.  We say that $M_0$ is \emph{geodesically complete} if for all
points $x,y \in M_0$, the intrinsic metric $d_{M_0}(x,y)$ is achieved
by a shortest path $\gamma$ joining $x$ and $y$ in $M_0$, i.e., if
$d_{M_0}(x,y)=L_d(\gamma)$.  We say that $M_0$ is \emph{(geodesically)
  convex in $M$} if $M_0$ is geodesically complete and if $d_{M_0} =d
\restr{M_0 \times M_0}$, i.e., all pairs of points $(x,y) \in M_0
\times M_0$ are joined by a geodesic $\gamma$ in $M_0$ with length
given by the original metric $d$, i.e., with $L_d(\gamma)=d(x,y)$. If
$M$ is geodesically convex in itself, we say $M$ is a \emph{geodesic
  space}.  We say that $M_0$ is \emph{(geodesically) strictly convex
  (in $M$)} if the geodesic joining any pair of points is unique.

Let $B_r(x):=\set{y \in M}{d(x,y) \le r} \subset M$ denote the
(closed) ball of radius $r$ around $x$ and let $B_r^*(x)$ denote the
ball without the point $x$. Let $\LipCont M$ denote the Lipschitz
continuous functions on $M$.  For $t \in (0,1)$, a point $z$ is
\emph{$t$-intermediate} between $x$ and $y$ if $d(x,z)=td(x,y)$ and
$d(y,z)=(1-t)d(x,y)$. If $M$ is geodesically strictly convex, the
$t$-intermediate point between $x$ and $y$ is unique but in general
there can be multiple $t$-intermediate points between $x$ and $y$.

For $N=1$ set $\zeta_{K,1}^{(t)}(\theta)=t$. For $N>1$ and $K<0$ define
\begin{equation*}
  \zeta_{K,N}^{(t)}(\theta)
  := t \left(\frac  {\sinh(t\theta\sqrt{-K/(N-1)})}
    {\sinh(\theta\sqrt{-K/(N-1)})}
  \right)^{N-1}
\end{equation*}
This function defines a reference constant which represents the ratio
of volumes of the radius $t\theta$ ball to the radius $\theta$ ball in
the constant curvature $K$ space of dimension $N$. One can make
suitable adjustments for $K=0$ or $K>0$.

\begin{definition}
  A \emph{Markov kernel} from $(\Omega_1, \mathcal{F}_1)$ to
  $(\Omega_2, \mathcal{F}_2)$ with $\Omega_i$ being measurable spaces
  and $\mathcal{F}_i$ the $\sigma$-algebras of measurable sets, is a
  map $P$ that associates to each $x \in \Omega_1$ a probability
  measure $P(x,\cdot)$ on $\Omega_2$ such that for any $B \in
  \mathcal{F}_2$ the map $x \mapsto P(x,B)$ is
  $\mathcal{F}_1$-measurable.
\end{definition}

\begin{definition}[\cite{Sturm06b}]
  Let $N \ge 1$ and $K \in \R$.  A metric measure space $(M,d,\mu)$
  satisfies the \emph{$(K,N)$ measure contraction property} or
  \emph{$(K,N)$-MCP} for short if for every $t \in (0,1)$ there exists
  a Markov kernel $P_t$ from $M \times M$ to $M$ such that
  \begin{enumerate}
  \item $P_t(x,y;dz)= \delta_{\gamma_t(x,y)}(dz)$ with $\gamma_t(x,y)$
    a $t$-intermediate point between $x$ and $y$ holds for
    $\mu^2$-almost all $(x,y) \in M \times M$.
  \item for $\mu$-almost every $x \in M$ and every measurable $B
    \subseteq M$ we have
    \begin{align*}
      \int_M \zeta_{K,N}^{(t)}(d(x,y)) P_t(x,y;B) \dd\mu(y) \le \mu(B) \\
      \int_M \zeta_{K,N}^{(1-t)}(d(x,y)) P_t(y,x;B) \dd\mu(y) \le \mu(B) 
    \end{align*}
  \end{enumerate}
  
As written, this definition implies that $M$ is connected. By a slight abuse of notation we will also include disconnected spaces provided they have at most finitely many connected components and satisfy the measure contraction property on each component. 
\end{definition}

\begin{remark}
  This definition can be interpreted as a way to generalize the notion
  of a lower Ricci curvature bound and an upper dimensional bound on a
  Riemannian manifold. A Riemannian manifold with Ricci curvature at
  least $K$ and dimension $N$ satisfies the $(K,N)$-MCP.
\end{remark}

For a list of classes of spaces that satisfy this property and a few
more explicit examples see \Subsec{examples} below.

\begin{lemma}
  If $(M,d,\mu)$ satisfies the $(K,N)$-MCP then (each connected component of) $M$ is a geodesic
  space.
\end{lemma}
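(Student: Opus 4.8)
The plan is to reduce the statement to the classical fact that a \emph{complete} metric space in which every pair of points admits a midpoint is a geodesic space, and to harvest the midpoint property from the MCP applied at the parameter $t=\tfrac12$, upgrading it from a statement that holds $\mu^2$-almost everywhere to one that holds for all pairs by means of the full support of $\mu$ together with compactness.

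First I would fix a connected component and reduce to a genuine finite-valued metric. The relation ``$d(x,y)<\infty$'' is an equivalence relation whose classes are simultaneously open and closed: the set of points at finite distance from a fixed point is a union of balls of finite radius (as $r$ ranges over $\N$), hence open, and its complement is open by the triangle inequality. A connected component therefore lies in a single such class, so $d$ is finite on it. Since $M$ is compact it is complete, and we fix one component $M_0$, compact and in particular complete. Recall that $M_0$ being a geodesic space means it is geodesically convex in itself, i.e.\ every pair $x,y\in M_0$ is joined by a path $\gamma$ in $M_0$ with $L_d(\gamma)=d(x,y)$; equivalently $d_{M_0}=d$ with the defining infimum attained.

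Next I would extract midpoints almost everywhere. Applying the $(K,N)$-MCP with $t=\tfrac12$ produces a Markov kernel $P_{1/2}$ and a set $G\subseteq M\times M$ of full $\mu^2$-measure such that for $(x,y)\in G$ one has $P_{1/2}(x,y;\cdot)=\delta_{\gamma_{1/2}(x,y)}$ with $\gamma_{1/2}(x,y)$ a $\tfrac12$-intermediate point, i.e.\ a midpoint with
\begin{equation*}
  d\bigl(x,\gamma_{1/2}(x,y)\bigr)=d\bigl(\gamma_{1/2}(x,y),y\bigr)=\tfrac12\,d(x,y).
\end{equation*}
Then comes the decisive step of passing to \emph{all} pairs. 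Since $\mu$ has full support, $\mu^2$ has full support on $M\times M$, so $G$ is dense. Given arbitrary $x,y\in M_0$, choose $(x_n,y_n)\in G$ with $x_n\to x$ and $y_n\to y$, and set $z_n:=\gamma_{1/2}(x_n,y_n)$. By compactness of $M$ a subsequence satisfies $z_n\to z$, and continuity of $d$ gives
\begin{equation*}
  d(x,z)=\lim_n d(x_n,z_n)=\lim_n \tfrac12\,d(x_n,y_n)=\tfrac12\,d(x,y),
\end{equation*}
and likewise $d(z,y)=\tfrac12\,d(x,y)$, so $z$ is a midpoint of $(x,y)$.

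Finally I would run the standard dyadic midpoint construction: iterating the choice of midpoints defines a map from the dyadic rationals of $[0,1]$ into $M_0$ that is $d(x,y)$-Lipschitz, hence extends by completeness to a continuous path $\gamma\colon[0,1]\to M_0$ with $L_d(\gamma)=d(x,y)$. As every path from $x$ to $y$ has length at least $d(x,y)$, this $\gamma$ is shortest and realizes $d_{M_0}=d$, so $M_0$ is a geodesic space. The main obstacle is exactly the almost-everywhere-to-everywhere passage, which is where full support of $\mu$ (to make $G$ dense) and compactness (to extract a convergent sequence of midpoints) are indispensable; the remainder is the classical midpoint argument.
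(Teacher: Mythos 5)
Your proof is correct and follows essentially the same route as the paper: extract $t$-intermediate points (at $t=\tfrac12$) from the MCP for $\mu^2$-almost all pairs, upgrade to all pairs, and conclude that a complete space with midpoints is geodesic. The paper compresses the last two steps into an assertion that completeness suffices plus a citation of Sturm for the midpoint-to-geodesic implication, whereas you correctly supply the missing details — full support of $\mu$ to make the good set dense, compactness to extract a limiting midpoint, and the dyadic construction — so your version is simply a self-contained elaboration of the same argument.
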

\begin{proof}
  The definition of the $(K,N)$-MCP implies that for $\mu \otimes
  \mu$-almost all points $(x,y) \in M \times M$ and all $t \in (0,1)$
  a $t$-intermediate point exists.  As we have assumed that $M$ is
  complete, we can replace `$\mu \otimes \mu$-almost all $(x,y)$' by
  `all $(x,y)$'.  Existence of $t$-intermediate points for all $t$
  and all $x,y$ is equivalent to being a geodesic space
  by~\cite{Sturm06a}.
\end{proof}

\begin{theorem}[see~\cite{Sturm06b}]
  \label{thm:mcp2}
  Assume the metric measure space $(M,d,\mu)$ satisfies the
  $(K,N)$-MCP for some $K \in \R$ and some $N\ge 1$.
  Then
  \begin{enumerate}
  \item 
    \label{mcp2.d}
    $(M,d,\mu)$ also satisfies the $(K',N')$-MCP for
    any $K'\le K$ and any $N'\ge N$.
  \item 
    \label{mcp2.e}
    If $M' \subseteq M$ is convex, then $(M', d \restr{M' \times
    M'}, \mu\restr{M'})$ also satisfies the $(K,N)$-MCP.
  \item
    \label{mcp2.a}
    $(M,d,\mu)$ has Hausdorff dimension at most $N$.
  \item
    \label{mcp2.b}
    For every $x \in M$ the function $r \mapsto \mu(B_r(x))/r^N$ is
    bounded away from zero for $r \in (0,1]$.
  \item
    \label{mcp2.c}
    $M$ satisfies the volume doubling property, that is there exists a
    constant $v_M$ such that for all $r>0$ and all $x \in M$ we have
    \begin{align*}
      \mu(B_{2r}(x)) \le v_M \mu(B_r(x))
    \end{align*}
  \end{enumerate}
  Note that property~\itemref{mcp2.c} follows from property~\itemref{mcp2.b}.
\end{theorem}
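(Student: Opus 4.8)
The plan is to reduce every assertion to the contraction kernel $P_t$ and the distortion coefficient $\zeta_{K,N}^{(t)}$, following Sturm~\cite{Sturm06b}, and to treat the parts in order of logical dependence: \itemref{mcp2.d} and \itemref{mcp2.e} are essentially formal, while \itemref{mcp2.b} is the analytic core from which \itemref{mcp2.a} and \itemref{mcp2.c} both follow. For \itemref{mcp2.d} the key observation is that the \emph{same} kernel $P_t$ witnesses the weaker property: the first defining condition (that $P_t(x,y;\cdot)$ is a Dirac mass at a $t$-intermediate point) is insensitive to $K$ and $N$, so only the integral inequalities have to be re-examined. It suffices to establish the pointwise monotonicity $\zeta_{K',N'}^{(t)}(\theta)\le\zeta_{K,N}^{(t)}(\theta)$ for all $\theta\ge 0$ whenever $K'\le K$ and $N'\ge N$; then the integrand shrinks and the inequalities survive with the same kernel. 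Writing $a=\sqrt{-K/(N-1)}$, one checks that $\sinh(ta\theta)/\sinh(a\theta)$ is decreasing in $a$ for $t\in(0,1)$, which handles the $K$-direction, and the $N$-direction is the classical monotonicity of the distortion coefficient in the dimension, which I would either cite or verify by direct differentiation.

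For \itemref{mcp2.e}, convexity of $M'$ is exactly what lets me restrict the data. For $x,y\in M'$ a $t$-intermediate point lies on a geodesic inside $M'$, so the restriction of $P_t$ is a Markov kernel on $M'$ satisfying the first condition there, and $d\restr{M'\times M'}=d$ guarantees that the distortion coefficient is unchanged. For the second condition I would integrate the original inequality only over $y\in M'$ and test against $B\subseteq M'$: since the integrand is non-negative the $M'$-integral is dominated by the $M$-integral, and $\mu\restr{M'}(B)=\mu(B)$, so the inequality is inherited verbatim.

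The heart is \itemref{mcp2.b}. Fixing $x$, I would use the contraction $\Phi_t\colon y\mapsto\gamma_t(x,y)$, which sends $y$ to distance $t\,d(x,y)$ from $x$, so that $\Phi_t^{-1}(B_R(x))=B_{R/t}(x)$; feeding $B=B_R(x)$ into the first contraction inequality gives $\int_{B_{R/t}(x)}\zeta_{K,N}^{(t)}(d(x,y))\,\dd\mu(y)\le\mu(B_R(x))$. Using the elementary lower bound $\zeta_{K,N}^{(t)}(\theta)\ge c\,t^N$, valid uniformly in $t\in(0,1)$ for $\theta$ in the bounded range $(0,D]$, where $D$ is the finite diameter of $M$ (this rests on $\sinh(tu)/\sinh u\ge t\,u/\sinh u$), the inequality collapses to the scale comparison $\mu(B_{tr}(x))\ge c\,t^N\mu(B_r(x))$. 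Taking $r=1$ and using $\mu(B_1(x))>0$ (full support) yields $\mu(B_s(x))\ge c(x)\,s^N$, which is \itemref{mcp2.b}; taking instead $t=\tfrac12$, $r=2s$ gives $\mu(B_{2s}(x))\le 2^N c^{-1}\mu(B_s(x))$, the doubling property \itemref{mcp2.c} (for large radii one absorbs the bounded values of $\zeta_{K,N}^{(t)}$ into $v_M$ using $D<\infty$). This is the precise sense in which \itemref{mcp2.c} follows from the estimate behind \itemref{mcp2.b}. Finally \itemref{mcp2.a} follows by a covering argument: a maximal $r$-separated set $\{x_i\}$ produces disjoint balls $B_{r/2}(x_i)$ of measure $\ge c(r/2)^N$, hence at most $C r^{-N}$ of them since $\mu(M)<\infty$, while the $B_r(x_i)$ cover $M$; this bounds the covering number by $Cr^{-N}$ and therefore the box, and hence Hausdorff, dimension by $N$.

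The main obstacle is twofold. First, the second MCP condition only holds for $\mu$-almost every $x$, whereas \itemref{mcp2.b} is asserted for \emph{every} $x$; upgrading from almost-everywhere to everywhere requires the completeness and continuity argument already used in the preceding lemma (continuity of $t$-intermediate points in $x,y$, cf.~\cite{Sturm06a}). Second, the covering proof of \itemref{mcp2.a} needs the lower bound $\mu(B_r(x))\ge c\,r^N$ with a constant $c$ \emph{independent} of $x$, while the contraction argument only delivers $c(x)=\const\cdot\mu(B_1(x))$; closing this gap calls for $\inf_{x\in M}\mu(B_1(x))>0$, which I would extract from compactness of $M$ and full support of $\mu$ via lower semicontinuity of $x\mapsto\mu(B_r(x))$ for open balls. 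The $N$-monotonicity of $\zeta_{K,N}^{(t)}$ needed in \itemref{mcp2.d} is the only genuinely self-contained computation, and is where I would be most careful, although it is classical.
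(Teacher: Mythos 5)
The paper offers no proof of \Thm{mcp2} at all: the statement is imported wholesale from \cite{Sturm06b} and used as a black box, so there is nothing in-paper to compare your argument against. What you have written is a reconstruction of Sturm's own proofs, and the architecture is right: \itemref{mcp2.d} reduces to monotonicity of $\zeta^{(t)}_{K,N}$ in $K$ and $N$ with the \emph{same} kernel as witness; \itemref{mcp2.e} is inheritance under restriction; and the single contraction estimate $\mu(B_{tr}(x))\ge c\,t^{N}\mu(B_r(x))$, obtained by testing the MCP inequality against $B=B_R(x)$ and using $P_t(x,y;B_R(x))=\chi_{B_{R/t}(x)}(y)$ together with $\zeta^{(t)}_{K,N}(\theta)\ge c\,t^{N}$ on a bounded range of $\theta$, simultaneously yields \itemref{mcp2.b}, the doubling property \itemref{mcp2.c}, and (via the packing/covering count) the dimension bound \itemref{mcp2.a}. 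One simplification is available to you: taking $r=\operatorname{diam}(M)$ in your own estimate gives $\mu(B_s(x))\ge c\,(s/\operatorname{diam}(M))^{N}\mu(M)$ for every $x$ at once, so the uniform constant needed for the covering argument comes for free and the semicontinuity detour is unnecessary.

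Two steps need more care than you give them. First, in \itemref{mcp2.e} the kernel $P_t$ on $M$ is a Dirac mass at one \emph{chosen} intermediate point $\gamma_t(x,y)$, and for $x,y\in M'$ this chosen point need not lie in $M'$ when geodesics are non-unique; convexity only guarantees that \emph{some} $t$-intermediate point stays in $M'$. The restricted kernel is then not a Markov kernel on $M'$ and condition (i) of the MCP fails, so you must re-select the intermediate points inside $M'$ (or restrict to strictly convex $M'$); only the integral inequalities survive restriction verbatim, as you note. Second, the $N$-monotonicity in \itemref{mcp2.d} is not a consequence of the $\sinh(ta)/\sinh(a)$ computation alone: increasing $N$ shrinks the argument $\theta\sqrt{-K/(N-1)}$, which \emph{increases} the ratio, while raising the exponent $N-1$ decreases it (the ratio being $\le t<1$); one must actually balance the two effects, e.g.\ by checking that $s\mapsto s\log\bigl(\sinh(tu/\sqrt{s})/\sinh(u/\sqrt{s})\bigr)$ is non-increasing. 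Neither point is fatal, but both are exactly where a "classical, would cite or verify" placeholder can hide an error.
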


\begin{assumption}
  \label{ass:uniform_dimension}
  We assume from now on the following:
  \begin{enumerate}
  \item
    \label{unif_dim.a}
    The number $N$ is the exact Hausdorff dimension of $M$, that
    is the $N$ in the $(K,N)$-MCP is sharp.
 
  \item
    \label{unif_dim.b}
    The space $M$ is \emph{$N$-Alfohrs-regular}, i.e., there exists a constant
    $c>0$ such that
    \begin{equation}
      \label{eq:alfohrs-reg}
      \frac1c r^N \le \mu(B_r(x)) \le c r^N
    \end{equation}
    for all $x \in M$ and all $r \le 1$.
  \end{enumerate}
 \end{assumption}
\begin{lemma}
    Assume the metric measure space $(M,d,\mu)$ satisfies the
  $(K,N)$-MCP for some $K \in \R$ and some $N\ge 1$.
    Then the limit
    \begin{equation*}
      \tau(x):=\lim_{r\to 0} \frac{\mu(B_r(x))}{r^{N}}
    \end{equation*}
    exists for all $x \in M$. 
    
    Note that the limit function $\map \tau M {(0,\infty)}$ is in general not continuous, but globally bounded.
\end{lemma}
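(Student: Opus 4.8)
The plan is to reduce the existence of the density to the generalized Bishop--Gromov volume monotonicity, which is the natural quantitative consequence of the MCP, and then take limits. Write $v(r):=\mu(B_r(x))$ and let $V_{K,N}(r)=\omega_{N-1}\int_0^r s_{K,N}(u)^{N-1}\,\dd u$ denote the volume of the geodesic $r$-ball in the $N$-dimensional model space of constant curvature $K$, with $s_{K,N}$ the appropriate radial density. The first step is to record the one-parameter family of ball inequalities that the MCP yields: inserting $B=B_{tr}(x)$ into the first defining inequality of the $(K,N)$-MCP and using $P_t(x,y;\cdot)=\delta_{\gamma_t(x,y)}$ together with $d(x,\gamma_t(x,y))=t\,d(x,y)$, so that $\gamma_t(x,y)\in B_{tr}(x)$ exactly when $y\in B_r(x)$, one obtains
\[
  \int_{B_r(x)} \zeta_{K,N}^{(t)}(d(x,y))\,\dd\mu(y)\le v(tr)
\]
for every $t\in(0,1)$ and $r>0$. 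In the model space the same expression, with $\mu$ replaced by the model measure, equals $V_{K,N}(tr)$, because $\zeta_{K,N}^{(t)}$ is precisely the Jacobian of the radial contraction. Letting $t\to1^-$ turns the inequality into an integro-differential inequality for $v$ which, compared with the model equality, is equivalent to the statement that $r\mapsto v(r)/V_{K,N}(r)$ is non-increasing. This is Sturm's Bishop--Gromov inequality for MCP spaces~\cite{Sturm06b}, and I would invoke it here rather than rederive it.

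Granting the monotonicity, the existence of $\tau(x)$ follows quickly. Since the model is asymptotically Euclidean near the origin, $V_{K,N}(r)/r^N\to\kappa_N$ as $r\to0$ for a dimensional constant $\kappa_N\in(0,\infty)$. I would then factor
\[
  \frac{v(r)}{r^N}=\frac{v(r)}{V_{K,N}(r)}\cdot\frac{V_{K,N}(r)}{r^N}.
\]
The second factor converges to $\kappa_N$. The first factor is non-increasing in $r$, hence converges as $r\to0^+$ to $L(x):=\sup_{r>0}v(r)/V_{K,N}(r)\in(0,+\infty]$. It remains to show $L(x)<\infty$, and here I would use the standing Ahlfors regularity \Assenum{uniform_dimension}{unif_dim.b}: from $v(r)\le c\,r^N$ and $V_{K,N}(r)\ge\tfrac12\kappa_N r^N$ for small $r$ we get $v(r)/V_{K,N}(r)\le 2c/\kappa_N$, whence $L(x)\le 2c/\kappa_N<\infty$, while the lower bound in~\eqref{eq:alfohrs-reg} gives $L(x)\ge(2c\kappa_N)^{-1}>0$. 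Consequently $\tau(x)=\kappa_N L(x)$ exists and lies in $(0,\infty)$. Because the Ahlfors constant $c$ in~\eqref{eq:alfohrs-reg} is \emph{uniform} in $x$, the same two-sided estimate forces $\tau(x)\in[\tfrac1{2c},2c]$ for all $x$, i.e. $\tau$ is globally bounded above and bounded away from $0$; no continuity is claimed.

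The main obstacle is the Bishop--Gromov monotonicity itself. A single substitution in the MCP inequality gives only the crude pointwise bound $v(tr)\ge\zeta_{K,N}^{(t)}(r)\,v(r)$ (for $K\le0$, to which we may reduce by \Thmenum{mcp2}{mcp2.d}, $\zeta_{K,N}^{(t)}$ is non-increasing in its radial argument, so on $B_r(x)$ it is bounded below by its value at $\rho=r$); since $\zeta_{K,N}^{(t)}(r)\to t^N$, this only makes $v(r)/r^N$ \emph{almost} non-decreasing as $r\to0$ and does not by itself yield convergence. Upgrading this to genuine monotonicity of $v/V_{K,N}$ needs the differential comparison with the model sketched above, which is exactly the part I would defer to~\cite{Sturm06b}. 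A second, easily overlooked point is that the MCP controls $v(r)/r^N$ only from below, via \Thmenum{mcp2}{mcp2.b}; ruling out $L(x)=+\infty$, and thus obtaining a finite density, genuinely requires the upper Ahlfors bound of \Ass{uniform_dimension}.
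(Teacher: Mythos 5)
Your proof is correct and follows essentially the same route as the paper: the paper's (much terser) proof likewise invokes the Bishop--Gromov monotonicity for MCP spaces from Sturm's work and concludes by monotonicity plus boundedness. Your version is more careful on two points the paper glosses over --- the passage from monotonicity of $\mu(B_r(x))/V_{K,N}(r)$ to convergence of $\mu(B_r(x))/r^N$, and the fact that the needed \emph{upper} bound comes from the standing Ahlfors-regularity assumption rather than from the MCP itself.
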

\begin{proof}
MCP spaces satisfy the Bishop-Gromov inequality by \cite{Sturm06b}, so $\frac{\mu(B_r(x))}{r^{N}}$ is increasing as $r \ra 0$, as it's bounded this implies the limit exists. 
\end{proof}

\begin{remark}
 These assumptions restrict the class of examples compared to~\cite{Sturm06b} but we feel we mostly excluded some pathological cases. We will call a space that satisfies the $(K,N)$-MCP \emph{and} these assumptions an \emph{MCP space}.
\end{remark}

\subsection{Glueing and manifold-like spaces}

The class of MCP spaces is already fairly large but it does not
contain some of the examples we want to study. We will extend this
class by introducing a glueing
operation.

\begin{definition}
  \label{def:identif}
  Let $(M,d,\mu)$ be a metric measure space.  We say that $\bar M$ is
  obtained from $M$ by \emph{glueing}
  \begin{itemize}
  \item if there are closed subsets $F_1$ and $F_2$ of $M$ such that
    there is a measure preserving isometry $\map \phi {F_1} {F_2}$
    and
  \item if $\bar M:=M/{\sim}$, where $\sim$ is the equivalence
    relation defined by $x \sim \phi(x)$;
  \item we assume that there exists a $k \in \N$ such that each
    equivalence class contains at most $k$ elements.
  \end{itemize}
\end{definition}
Denote the natural projection map by $\map \pi M {\bar M}$.  This
projection defines a metric measure space $(\bar M, \bar
d,\bar \mu)$ as follows: The induced distance is given by $\bar
d(\bar{x},\bar{y}):= \min \set{d(x,y)}{\pi(x)=\bar x,\pi(y)=\bar y}$,
and the induced measure is the the push forward measure $\bar
\mu:=\pi^*\mu$ (i.e., $\bar \mu(\bar{B}):= \mu(\pi^{-1}(\bar B))$).

Note that this construction includes both the possibility of glueing a metric space to itself as well as the possibility of glueing together two components of a disconnected metric space.

\begin{definition}
  \label{def:mfd-like}
  A \emph{manifold-like space} is a connected metric measure space $(\bar M,
  \bar d,\bar \mu)$ that is obtained from a (possibly not connected) MCP space $(M,d,\mu)$
  through a finite number of glueings.
\end{definition}

Note that $\bar M=M/{\sim}$ where $x \sim y$ if and only if there is a finite
sequence of isometries $\phi_1,\dots,\phi_r$ defining the glueing such
that $x=x_0$, $x_1=\phi_1(x_0)$, \dots, $y=x_r=\phi_r(x_{r-1})$.  We
still write $\map \pi M {\bar M}$ for a manifold-like space.

\begin{remark}
  Note that glueing does \emph{not} preserve the $(K,N)$-MCP property,
  as we will see in the example \Subsec{examples}.
\end{remark}

\begin{theorem}
  \label{thm:prop.identif.space}
  Let $(\bar M, \bar d,\bar \mu)$ be a manifold-like space obtained
  from the $(K,N)$-MCP space $(M,d,\mu)$. Then $\bar M$ inherits the
  following properties.
  \begin{enumerate}
  \item
    \label{prop.identif.space.a}
    $(\bar M, \bar d,\bar \mu)$ has Hausdorff dimension $N$ and is
    $N$-Alfohrs-regular, see~\eqref{eq:alfohrs-reg}.
  \item
    \label{prop.identif.space.b}
    $(\bar M, \bar d,\bar \mu)$ satisfies the volume doubling
    property. There exists a constant $v_{\bar M}$ such that for all
    $r>0$ and all $\bar{x} \in \bar M$ we have
    \begin{align*}
      \bar \mu(B_{2r}(\bar{x})) \le v_{\bar M} \bar \mu(B_r(\bar{x}))
    \end{align*}
  \item
    \label{prop.identif.space.c}
    The limit 
    \begin{align*}
      \bar{\tau}(\bar{x}) := \lim_{r\to 0} \frac{\bar \mu(B_r(\bar{x}))}{r^{N}} 
    \end{align*}
    exists for every $\bar{x} \in \bar{M}$. The limit function
    $\bar{x} \mapsto \bar{\tau}(\bar{x})$ is globally bounded on $\bar
    M$.
  \end{enumerate}
\end{theorem}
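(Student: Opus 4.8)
The plan is to transfer each property from $M$ to $\bar M$ by analysing how the projection $\map \pi M {\bar M}$ acts on small balls. Two elementary observations drive everything: $\pi$ is $1$-Lipschitz, since $\bar d$ is defined as a minimum over representatives, and each fibre $\pi^{-1}(\bar x)$ is a finite set $\{x_1,\dots,x_m\}$ with $m\le k$. Unwinding the definition of $\bar d$ gives the identity $B_r(\bar x)=\bigcup_{j=1}^m \pi(B_r(x_j))$, and because $\bar\mu=\pi^*\mu$ every computation of $\bar\mu(B_r(\bar x))=\mu(\pi^{-1}(B_r(\bar x)))$ reduces to understanding the \emph{saturation} $\pi^{-1}(\pi(A))$ of a set $A\subseteq M$ under the glueing relation. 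First I would record a saturation estimate: the equivalence relation is generated by the finitely many glueing isometries and their inverses, and each equivalence class, viewed as a graph whose edges join points related by one generator, is connected with at most $k$ vertices; hence any point equivalent to a given one is reached by a word in the generators of length at most $k-1$. There are only finitely many such words $g_1,\dots,g_L$, each a partial measure preserving isometry, so $\pi^{-1}(\pi(A))\subseteq\bigcup_{\ell=1}^L g_\ell(A\cap\dom g_\ell)$ and therefore $\mu(\pi^{-1}(\pi(A)))\le L\,\mu(A)$ for every measurable $A\subseteq M$.

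For \itemref{prop.identif.space.a} the lower bound is immediate: $B_r(x_1)\subseteq\pi^{-1}(B_r(\bar x))$ gives $\bar\mu(B_r(\bar x))\ge\mu(B_r(x_1))\ge c^{-1}r^N$ by the $N$-Ahlfors regularity of $M$ (\Assenum{uniform_dimension}{unif_dim.b}). The upper bound follows by applying the saturation estimate to each $A=B_r(x_j)$ and summing over the at most $k$ preimages, giving $\bar\mu(B_r(\bar x))\le kL\,c\,r^N$. These are exactly the $N$-Ahlfors regularity bounds \eqref{eq:alfohrs-reg} for $\bar M$, and Ahlfors $N$-regularity forces the Hausdorff dimension to equal $N$. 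Property \itemref{prop.identif.space.b} then drops out for small $r$ from the two-sided bound, $\bar\mu(B_{2r}(\bar x))/\bar\mu(B_r(\bar x))\le 2^N\,kL\,c^2$, while for large $r$ the compactness of $\bar M$ makes the ratio trivially bounded; patching the two ranges yields a single doubling constant $v_{\bar M}$.

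The heart of the argument is \itemref{prop.identif.space.c}, and here I would exploit that the glueing sets $F_1,F_2$ are \emph{closed}. If a preimage $x_j$ does not lie in a given closed glued set, it has positive distance to it, so for all sufficiently small $r$ no glueing map touches $B_r(x_j)$ except those carrying $x_j$ itself, and such a map sends $B_r(x_j)$ isometrically into a ball $B_r(\phi(x_j))$ around another preimage $\phi(x_j)\in\pi^{-1}(\bar x)$. Consequently, once $r$ is smaller than both the distances from each $x_j$ to the closed glued sets not containing it and than $\tfrac12\min_{j\ne j'}d(x_j,x_{j'})$, the saturation collapses and $\pi^{-1}(B_r(\bar x))=\bigsqcup_{j=1}^m B_r(x_j)$ is a disjoint union. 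Hence $\bar\mu(B_r(\bar x))=\sum_{j=1}^m\mu(B_r(x_j))$, so dividing by $r^N$ and letting $r\to0$ yields $\bar\tau(\bar x)=\sum_{j=1}^m\tau(x_j)$, each $\tau(x_j)$ existing by the limit lemma already established on $M$. Global boundedness of $\bar\tau$ follows at once from $\bar\tau(\bar x)\le k\,\sup_{x\in M}\tau(x)$ and the global boundedness of $\tau$ on $M$.

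The main obstacle is precisely the saturation analysis: one must guarantee that the identifications never enlarge a small ball $B_r(\bar x)$ beyond the disjoint balls around the finitely many preimages, and that no measure is gained in the process. This is exactly what the closedness of $F_1,F_2$ together with the uniform fibre bound $k$ provide, and it is the only place where the finite-glueing hypothesis is genuinely used; everything else is bookkeeping built on the Ahlfors regularity and the existence of $\tau$ already available on the underlying MCP space $M$.
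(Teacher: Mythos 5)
Your proof is correct and follows the same route as the paper, whose own argument is only a two-line sketch (``finitely many preimages'' plus ``push-forward measure''). The saturation estimate $\mu(\pi^{-1}(\pi(A)))\le L\,\mu(A)$ and the small-radius disjointness $\pi^{-1}(B_r(\bar x))=\bigsqcup_j B_r(x_j)$, which you derive from the closedness of the glued sets and the uniform fibre bound $k$, are exactly the details the paper leaves implicit, and they are supplied correctly.
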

\begin{proof}
  It is clearly sufficient to prove this for one glueing. By
  definition each point in $\bar M$ has only finitely many preimages
  in $M$ under the projection map $\pi$. This shows that $\bar M$
  still has Hausdorff dimension $N$ and is $N$-Alfohrs-regular.

  As $\bar \mu$ is just the push forward metric of $\mu$,
  properties~\itemref{prop.identif.space.b}
  and~\itemref{prop.identif.space.c} are directly inherited from $M$.
\end{proof}

\subsection{Examples}
\label{ssec:examples}

In this section we will show that various classes of spaces are MCP
spaces or manifold-like. We will also exhibit a few concrete examples
and counter examples.

\begin{lemma}[\cite{Sturm06b}]
  If $(M,d,\mu)$ is a metric measure space with Hausdorff dimension $N$
  and with Alexandrov curvature bounded from below by $\kappa$, then
  $M$ satisfies the $((N-1)\kappa, N)$-MCP.
\end{lemma}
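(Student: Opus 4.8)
The plan is to reduce the statement to the comparison geometry of Alexandrov spaces, following the Riemannian heuristic that a sectional (here Alexandrov) lower curvature bound $\kappa$ forces a Ricci-type lower bound $(N-1)\kappa$, which is exactly the content of the $((N-1)\kappa,N)$-MCP. First I would record the geometric preliminaries: a complete Alexandrov space with curvature bounded below by $\kappa$ and finite Hausdorff dimension $N$ is a locally compact geodesic space (by the Hopf--Rinow--Cohn-Vossen theorem for length spaces, see~\cite{BBI01}), so a minimizing geodesic joins every pair of points. The crucial regularity input is that the set of pairs $(x,y)$ joined by more than one minimizing geodesic is $\mu\otimes\mu$-null; this follows from the structure theory of Alexandrov spaces, where the singular set carries no $\mathcal H^N$-mass and geodesics are non-branching. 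This lets me define, for each $t\in(0,1)$, the Markov kernel $P_t(x,y;\cdot):=\delta_{\gamma_t(x,y)}$, where $\gamma_t(x,y)$ is the $t$-intermediate point on the $\mu\otimes\mu$-a.e.\ unique geodesic from $x$ to $y$; measurability of $(x,y)\mapsto\gamma_t(x,y)$ makes this a genuine Markov kernel, and condition~(i) of the MCP then holds by construction.

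The main work is the pair of volume inequalities in condition~(ii). I would fix $x\in M$ and study the geodesic contraction toward $x$, the map $\Phi_t^x\colon y\mapsto\gamma_t(x,y)$. The inequality $\int_M \zeta_{(N-1)\kappa,N}^{(t)}(d(x,y))\,P_t(x,y;B)\,d\mu(y)\le\mu(B)$ amounts to saying that pushing $\mu$ forward under $\Phi_t^x$ expands volumes by at most the model factor $1/\zeta_{(N-1)\kappa,N}^{(t)}$. To see this I would disintegrate the Hausdorff measure $\mu=\mathcal H^N$ along the geodesics emanating from $x$, writing it in ``polar coordinates'' about $x$; the curvature lower bound $\kappa$ then controls the Jacobian of the contraction exactly as in the constant-curvature model of dimension $N$, the radial direction contributing a factor $t$ and each of the $N-1$ transverse directions contributing $\sinh(t\theta\sqrt{-K/(N-1)})/\sinh(\theta\sqrt{-K/(N-1)})$ (with $\theta=d(x,y)$ and $K=(N-1)\kappa$), so that the total distortion is precisely $\zeta_{(N-1)\kappa,N}^{(t)}(d(x,y))$. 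Integrating this pointwise distortion bound over the geodesic cone whose endpoints land in $B$ yields the inequality; the second inequality is obtained symmetrically by contracting toward $y$ and using the parameter $1-t$.

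The hardest step is the volume-distortion estimate in the absence of a smooth structure: in a Riemannian manifold this is just Bishop's Jacobian comparison for the exponential map, but in a singular Alexandrov space one must replace the exponential map and its Jacobian by an a.e.\ defined radial structure and justify the disintegration of $\mathcal H^N$ along the geodesics issuing from $x$. The required tools are the Bishop--Gromov volume comparison for Alexandrov spaces together with the comparison (first and second variation) estimates for the distance function from a point. I would therefore organise the argument so that everything reduces to this single comparison inequality, and then invoke~\cite{Sturm06b} for the comparison itself rather than redeveloping the full Alexandrov-space machinery here.
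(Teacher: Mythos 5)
The paper never proves this lemma: it is quoted verbatim from~\cite{Sturm06b} with no proof environment attached, so there is no in-paper argument to compare yours against. Your sketch is a faithful reconstruction of the standard proof strategy behind Sturm's (and Ohta's) result --- take $P_t(x,y;\cdot)=\delta_{\gamma_t(x,y)}$ on the $\mu\otimes\mu$-a.e.\ unique geodesic, reduce condition~(ii) to a volume-distortion bound for the contraction $\Phi_t^x$, and identify the model coefficient $\zeta^{(t)}_{(N-1)\kappa,N}(\theta)$ as the product of one radial factor $t$ and $N-1$ transverse Jacobi-field factors $\sinh(t\theta\sqrt{-\kappa})/\sinh(\theta\sqrt{-\kappa})$ --- and that identification of the exponent $N-1$ with the passage from the sectional-type bound $\kappa$ to the Ricci-type bound $(N-1)\kappa$ is exactly right.

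However, as a proof your proposal is not self-contained in precisely the place where the statement is nontrivial: the ``Jacobian of the contraction'' does not literally exist on a singular Alexandrov space, and you explicitly hand the required disintegration of $\mathcal H^N$ along geodesics and the comparison estimate back to~\cite{Sturm06b}. That makes your text an expanded citation rather than an independent argument --- which is defensible, and indeed matches what the paper itself does, but you should say so plainly rather than present it as a proof. If you wanted to close the gap you would need either the Otsu--Shioya structure theory (the space is a $C^0$-Riemannian manifold off an $\mathcal H^N$-null set, on which the comparison becomes a genuine Jacobian estimate) or a direct Bishop--Gromov-type density-ratio argument; you would also need to supply a measurable selection for $\gamma_t(x,y)$ on the null set where geodesics are non-unique so that $P_t$ is an honest Markov kernel, and justify a.e.\ uniqueness via non-branching plus a density argument rather than merely via the nullity of the singular set. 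These are all standard but none of them is free.
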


\begin{corollary}
  Compact Riemannian manifolds without boundary or with smooth
  boundary are MCP spaces.
\end{corollary}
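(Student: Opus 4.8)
The strategy in both cases is to produce an Alexandrov curvature lower bound, feed it into the preceding lemma of \cite{Sturm06b} to obtain an MCP, and then separately verify the two conditions of \Ass{uniform_dimension}. I begin with the boundaryless case. If $M$ is a compact Riemannian manifold of dimension $n$ without boundary, then by compactness its sectional curvature is bounded below by some $\kappa \in \R$, and a Riemannian manifold with sectional curvature $\ge \kappa$ is an Alexandrov space of curvature $\ge \kappa$ (Toponogov comparison). Its Hausdorff dimension equals its topological dimension $n$, so the preceding lemma yields the $((n-1)\kappa, n)$-MCP. It then remains to check \Ass{uniform_dimension}: the number $N=n$ is exactly the Hausdorff dimension, and since the Riemannian volume of a small geodesic ball satisfies $\mu(B_r(x)) = \omega_n r^n(1+O(r^2))$ uniformly over the compact manifold, the $N$-Alfohrs-regularity estimate holds. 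Hence $M$ is an MCP space.

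For the boundary case the difficulty is that a manifold with smooth but possibly concave boundary need not be an Alexandrov space of curvature bounded below, so the lemma cannot be applied to $M$ directly. My plan is to pass to the double $DM := M \cup_{\partial M} M$, the closed manifold obtained by glueing two copies of $M$ along $\partial M$. The reflection $\iota$ interchanging the two copies is an isometry of $DM$ whose fixed-point set is $\partial M$. Given two points in one copy of $M$, any minimizing geodesic of $DM$ joining them can be folded back by applying $\iota$ to the portions lying in the other copy, producing a path of the same length contained in that single copy; hence each copy of $M$ is geodesically convex in $DM$ and carries exactly the intrinsic metric. Consequently, once $DM$ is known to be an MCP space, $M$ inherits the MCP from \Thmenum{mcp2}{mcp2.e} (convex subsets of MCP spaces are MCP spaces), while $N$-Alfohrs-regularity and exactness of the Hausdorff dimension for $M$ are again immediate, since balls centred at boundary points are comparable to half-balls and still satisfy $\mu(B_r(x)) \asymp r^n$ uniformly.

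The main obstacle is therefore to establish that the double $DM$ is an MCP space. Its metric is smooth in each open copy but only Lipschitz across the glueing locus $\partial M$, where a distributional curvature term proportional to the second fundamental form of $\partial M$ appears. When $\partial M$ is convex (non-negative second fundamental form with respect to the inward normal) this concentrated curvature has the favourable sign, $DM$ is an Alexandrov space of curvature bounded below, and the boundaryless argument applies verbatim to $DM$. For a general smooth boundary the concentrated curvature can be negative, so an Alexandrov lower bound may fail; here I would instead verify the $(K,n)$-MCP on $DM$ directly from its definition, estimating the volume distortion of the geodesic contraction $y \mapsto \gamma_t(x,y)$ by a Jacobi-field comparison along minimizing geodesics. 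Since $M$ is compact, its Ricci curvature and the second fundamental form of $\partial M$ are bounded, which should furnish a finite, though possibly very negative, effective constant $K$ for which the MCP holds; controlling the distortion of the geodesics that graze or reflect off $\partial M$ is the delicate point, and this is where the real work lies.
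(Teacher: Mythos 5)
Your treatment of the boundaryless case is correct and coincides with the paper's: compactness gives a sectional curvature bound, Toponogov gives an Alexandrov lower bound, the Hausdorff dimension is $n$, and the lemma quoted from \cite{Sturm06b} yields the $((n-1)\kappa,n)$-MCP; your additional verification of \Ass{uniform_dimension} via the volume expansion of small balls is a welcome detail that the paper leaves implicit.

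The boundary case, however, is not a proof but a programme, and the programme stops exactly at the statement to be proven. You are right that a compact manifold with smooth but non-convex boundary is in general \emph{not} an Alexandrov space with curvature bounded below (geodesics that graze a concave portion of the boundary can branch), so the lemma on Alexandrov spaces cannot be applied directly --- this is a genuine subtlety that the paper's own one-line proof, which simply invokes the Toponogov comparison for ``compact $N$-dimensional manifolds'', glosses over. But your proposed remedy does not close the gap. The doubling construction and the folding argument showing that each copy of $M$ is convex in $DM$ are fine, and \Thmenum{mcp2}{mcp2.e} would then transfer the MCP from $DM$ to $M$; the problem is that $DM$ carries a metric that is only Lipschitz across $\partial M$, with concentrated curvature of the unfavourable sign precisely when $\partial M$ is concave, so $DM$ is no more an Alexandrov space than $M$ was, and the ``direct verification of the $(K,n)$-MCP on $DM$ by Jacobi-field comparison'' that you defer to is exactly the hard content of the corollary. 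Estimating the volume distortion of the contraction $y\mapsto\gamma_t(x,y)$ along geodesics that reflect off or run along a concave boundary is the whole difficulty; asserting that compactness ``should furnish'' a finite constant $K$ is not an argument, and you say as much yourself. As it stands the corollary is established only for closed manifolds and for manifolds with convex boundary; for the general smooth-boundary case one needs either a genuine quantitative comparison argument near the boundary or a citation to a result covering manifolds with (possibly concave) boundary, neither of which is supplied.
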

\begin{proof}
  Compact $N$-dimensional manifolds have Alexandrov curvature bounded
  from below by the Cartan-Alexandrov-Toponogov triangle comparison
  theorem and have Hausdorff dimension $N$.
\end{proof}

A closed subset $D \subset \R^n$ is called a \emph{special Lip\-schitz
  domain} if there is a Lip\-schitz-continuous function $\map
\psi{\R^{n-1}}\R$ such that
\begin{equation*}
  D=\set{(x',x_n) \in \R^n}{\psi(x') \le  x_n \; \forall x' \in \R^{n-1}}.
\end{equation*}

\begin{definition}[cf.~\cite{MitreaTaylor99}]
  \label{def:lip.mfd}
  A pair of a compact metric measure space $(M,d,\mu)$ and a smooth
  Riemannian manifold without boundary $(\wt M, \wt g)$ is a
  \emph{smooth manifold with Lip\-schitz boundary} if the following
  holds.
  \begin{itemize}
  \item $M \subseteq \wt M$; 
  \item the metric $d$ is the metric defined via the Riemannian metric
    $\wt g$ restricted to $M$;
  \item the measure $\mu$ is the Riemannian measure defined via $\wt
    g$ restricted to $M$;
  \item $M$ and $\wt M$ have the same dimension;
  \item in the charts of $\wt M$, the boundary of $M$ in $\wt M$ is
    Lip\-schitz, i.e, for each point $p \in \bd M$ there is an open
    neighbourhood $U$ and a (smooth) chart $\map \phi U {V \subset
      \R^n}$ of the manifold $\wt M$ and a special Lip\-schitz domain
    $D \subset \R^n$ such that $M \cap U=\phi(V \cap D)$.
  \end{itemize}
\end{definition}

\begin{corollary}
  If $(M,\wt M)$ is a manifold with Lipschitz boundary and $M$ is
  convex in $\wt M$, then $M$ is an MCP space.
\end{corollary}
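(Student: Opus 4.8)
The plan is to exhibit $M$ as a geodesically convex subset of a compact ambient MCP space and then to quote the hereditary property \Thmenum{mcp2}{mcp2.e}, after which only the two extra conditions of \Ass{uniform_dimension} remain to be checked. The point is that all the genuinely geometric work (curvature lower bounds, existence of the Markov kernels) is already done once we produce a suitable ambient space, because $M$ differs from $\wt M$ only by a Lipschitz cut.

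First I would manufacture the ambient space. Since $M$ is compact and contained in the smooth boundaryless manifold $\wt M$, there is a compact smooth codimension-zero submanifold $\wt M_0 \subseteq \wt M$ with smooth boundary containing $M$ in its interior (any compact subset of a smooth manifold lies in the interior of such a domain). Being a compact Riemannian manifold with smooth boundary, $\wt M_0$ is an MCP space by the preceding corollary, so it satisfies the $(K,N)$-MCP for some $K \in \R$, with $N = \dim \wt M$. Next I would transfer convexity from $\wt M$ down to $\wt M_0$. For $x,y \in M$ the geodesic $\gamma$ realising $d(x,y)$ stays in $M \subseteq \wt M_0$ and hence is admissible in $\wt M_0$, while enlarging the ambient space can only decrease distances; a sandwich argument
\[
  d(x,y) = d_{\wt M}(x,y) \le d_{\wt M_0}(x,y) \le L_d(\gamma) = d(x,y)
\]
then yields $d_{\wt M_0}\restr{M\times M}=d$ with realising geodesics in $M$, so $M$ is convex in $\wt M_0$. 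Since the restricted measure is again $\mu$, \Thmenum{mcp2}{mcp2.e} shows that $(M,d,\mu)$ satisfies the $(K,N)$-MCP.

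It then remains to verify \Ass{uniform_dimension}. Because $M$ has nonempty interior in the $N$-dimensional manifold $\wt M$, its Hausdorff dimension is exactly $N$, so the exponent $N$ in the $(K,N)$-MCP is sharp, giving condition~(i). For $N$-Alfohrs-regularity~\eqref{eq:alfohrs-reg} the upper bound $\mu(B_r(x)) \le c\, r^N$ is immediate from $\mu(B_r(x)) \le \vol_{\wt M}$ of the ambient $r$-ball together with compactness. The lower bound is standard Riemannian volume growth at interior points, while at boundary points one invokes the interior cone (corkscrew) condition enjoyed by Lipschitz domains: $M \cap B_r(x)$ contains a cone of fixed aperture and height comparable to $r$, whence $\mu(B_r(x)) \ge c^{-1} r^N$.

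The hard part will be exactly this uniform lower volume bound along the Lipschitz boundary: one must extract, from the single Lipschitz constant and a finite atlas covering the compact set $\bd M$, one aperture and one height for the interior cones, so that the constant in~\eqref{eq:alfohrs-reg} is genuinely independent of the base point $x \in M$. Everything else — the existence of the smooth ambient domain $\wt M_0$, the convexity transfer, and the appeal to \Thmenum{mcp2}{mcp2.e} — is routine once that ambient MCP space is in place.
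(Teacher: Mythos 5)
Your proposal is correct and follows essentially the same route as the paper's (very terse) proof: inherit the $(K,N)$-MCP on convex subsets via \Thmenum{mcp2}{mcp2.e} from the preceding corollary on compact Riemannian manifolds, and use the Lipschitz boundary to verify \Ass{uniform_dimension}. You merely fill in details the paper leaves implicit --- passing to a compact ambient domain $\wt M_0$ with the sandwich argument for convexity, and the interior cone condition for the lower Alfohrs bound, which is exactly the point of the paper's remark that Lipschitz continuity is crucial and cusps would break the assumption.
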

\begin{proof}
  The $(K,N)$-MCP property is inherited on convex subsets by
  \Thmenum{mcp2}{mcp2.e}. The dimension \Ass{uniform_dimension} is
  inherited for subsets with Lipschitz boundary. Note that Lipschitz
  continuity is crucial here. If there are cusps, this assumption may
  fail, see \Exenum{mcp}{mcp.e} below.
\end{proof}

\begin{lemma}
  Compact metric graphs are manifold-like spaces.
\end{lemma}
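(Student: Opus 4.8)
The plan is to realise a compact metric graph $G$ directly as the quotient of a disconnected MCP space by finitely many glueings, so that the defining conditions of \Def{mfd-like} are met on the nose. Write $G$ as a finite collection of edges $e_1,\dots,e_m$, each isometric to a compact interval $[0,\ell_j]$ carrying its Lebesgue measure, together with the identifications of edge-endpoints prescribed by the combinatorial structure of the graph; compactness of $G$ forces both $m$ and each length $\ell_j$ to be finite. I would then set $M:=\bigsqcup_{j=1}^m [0,\ell_j]$, the disjoint union of the closed edges, with the metric equal to the usual interval metric on each component and $+\infty$ between distinct components, and with $\mu$ the edgewise Lebesgue measure. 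Throughout I take $G$ connected, as is standard for metric graphs and as is forced by the fact that manifold-like spaces are connected by definition.

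First I would verify that $M$ is an MCP space. Each interval $[0,\ell_j]$ is a compact one-dimensional Riemannian manifold with smooth boundary, hence an MCP space by the corollary on Riemannian manifolds above; its exact Hausdorff dimension is $1$ and it is $1$-Alfohrs-regular (a ball of radius $r\le 1$ has measure between $r$ and $2r$), so \Ass{uniform_dimension} holds. Since distinct components sit at infinite distance, every ball of finite radius lies inside a single edge, so the global space $M$ has Hausdorff dimension $1$, is $1$-Alfohrs-regular, and satisfies the $(0,1)$-MCP on each component. As $M$ has only finitely many components, it is an MCP space in the disconnected sense permitted by the convention following the definition of the MCP.

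Next I would encode the vertex identifications as glueings in the sense of \Def{identif}. For a vertex $v$ of degree $d_v$ meeting the edge-endpoints $p_1,\dots,p_{d_v}\subset M$, I would introduce the $d_v-1$ pairwise identifications $\phi_i\colon\{p_i\}\to\{p_{i+1}\}$. Each $\phi_i$ is a map between one-point, hence closed, subsets of $\mu$-measure zero, so the isometry and measure-preserving conditions are automatic, and the equivalence relation generated by all these maps fuses exactly the endpoints lying over each vertex. Summing over vertices gives $\sum_v(d_v-1)=2m-|V|$ glueings, a finite number, and every equivalence class has at most $\max_v d_v<\infty$ elements, so the finite-fibre requirement of \Def{identif} is met. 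The quotient $\bar M=M/{\sim}$ is then connected (because $G$ is), and $\bar\mu=\pi_*\mu$ is the edgewise Lebesgue measure on $G$; thus $\bar M$ is obtained from an MCP space by finitely many glueings and is a manifold-like space.

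The point that needs the most care is the quotient metric. Identifying the endpoints creates shortcuts through the vertices, so the distance realising $G$ is the induced length (intrinsic) metric on $M/{\sim}$: paths may travel within edges and change edges freely at an identified vertex, and $\bar d(\bar x,\bar y)$ is the infimum of the $d$-lengths of such paths between representatives, which is precisely the standard metric-graph distance. I would check that this coincides with the induced distance attached to the glueing and that $\bar\mu$ is unaffected because the glued locus (the vertices) is $\mu$-null. Finally, it is worth stressing that one cannot short-circuit the argument by treating $G$ itself as an MCP space: at a vertex of degree at least three $G$ is not locally manifold-like and the $(K,N)$-MCP fails, which is exactly why one must decompose into edges and then glue (cf.\ the remark that glueing does not preserve the MCP property).
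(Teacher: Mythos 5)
Your proof is correct and takes exactly the paper's approach: the paper's entire proof is the one-line observation that a compact metric graph is obtained from finitely many finite intervals (compact manifolds with boundary, hence MCP spaces) by glueing their endpoints. Your version just verifies the conditions of the glueing definition in detail, and your added care that the induced distance on the quotient is the length (chain) metric of the metric graph addresses a point the paper glosses over entirely.
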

\begin{proof}
  A compact metric graph can be obtained from a finite number of
  finite intervals, that is manifolds with boundary, through glueing
  of the end points. 
\end{proof}
Note that any vertex of degree at least $3$ has
  Alexandrov curvature $-\infty$.

\begin{example}
  A compact good orbifold is a manifold-like space.  See~\cite{DGGW08}
  for the exact definition and a general introduction to orbifolds. A
  good orbifold is the orbit space of an isometric action by a
  discrete group on a manifold. In other words it can be obtained
  through glueing from a manifold.
\end{example}

\begin{lemma}[\cite{Ohta07}] 
  If $(M_1,d_1,\mu_1)$ and $(M_2,d_2,\mu_2)$ satisfy the
  $(K_1,N_1)$-MCP and $(K_2,N_2)$-MCP respectively, then $(M_1\times
  M_2, d_1+d_2, \mu_1 \times \mu_2)$ satisfies the $(\min(K_1,K_2),
  N_1+N_2)$-MCP. In other words, the MCP property is preserved under
  products.
\end{lemma}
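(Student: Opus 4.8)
The plan is to build a Markov kernel on the product directly from the kernels on the factors and to verify the two defining conditions of the $(\min(K_1,K_2),N_1+N_2)$-MCP for it. Write $x=(x_1,x_2)$, $y=(y_1,y_2)$ for points of $M_1\times M_2$, let $P_t^{(1)}$ and $P_t^{(2)}$ be the Markov kernels supplied by the MCP on $M_1$ and $M_2$, and set $P_t:=P_t^{(1)}\otimes P_t^{(2)}$, i.e.
\begin{equation*}
  P_t\bigl((x_1,x_2),(y_1,y_2);A_1\times A_2\bigr)
  := P_t^{(1)}(x_1,y_1;A_1)\,P_t^{(2)}(x_2,y_2;A_2),
\end{equation*}
extended to all measurable subsets of $M_1\times M_2$ in the usual way. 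The tensor product of Markov kernels is again a Markov kernel, and measurability of $x\mapsto P_t(x,B)$ follows from measurability in each factor together with a monotone-class argument, so $P_t$ is admissible.

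For condition (i) the key observation is that if $z_i$ is $t$-intermediate between $x_i$ and $y_i$ in $M_i$ for $i=1,2$, then $z=(z_1,z_2)$ is $t$-intermediate between $x$ and $y$ in the product: along the product geodesic both factor distances scale by the same factor $t$, so $d(x,z)=t\,d(x,y)$ and $d(z,y)=(1-t)\,d(x,y)$. Since $P_t^{(i)}(x_i,y_i;\cdot)=\delta_{\gamma_t^{(i)}(x_i,y_i)}$ for $\mu_i^2$-almost all $(x_i,y_i)$, Fubini's theorem gives $P_t(x,y;\cdot)=\delta_{(\gamma_t^{(1)}(x_1,y_1),\gamma_t^{(2)}(x_2,y_2))}$ for $(\mu_1\times\mu_2)^2$-almost all $(x,y)$, and the point in the subscript is $t$-intermediate by the previous observation.

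For condition (ii) I would work at the level of measures rather than with individual sets $B$. The contraction map of the product is $\Gamma_t=\gamma_t^{(1)}\times\gamma_t^{(2)}$, and condition (ii) for a factor is equivalent to $(\gamma_t^{(i)}(x_i,\cdot))_*\bigl[\zeta_{K_i,N_i}^{(t)}(d_i(x_i,\cdot))\,\mu_i\bigr]\le\mu_i$. Pushing the product measure forward under $\Gamma_t$ factorises by Fubini, so
\begin{equation*}
  (\Gamma_t)_*\Bigl[\zeta_{K_1,N_1}^{(t)}(d_1(x_1,\cdot))\,\zeta_{K_2,N_2}^{(t)}(d_2(x_2,\cdot))\,(\mu_1\times\mu_2)\Bigr]
  \le \mu_1\times\mu_2 .
\end{equation*}
Hence condition (ii) for $P_t$ with parameters $(\min(K_1,K_2),N_1+N_2)$ follows once one establishes the pointwise comparison of reference coefficients
\begin{equation*}
  \zeta_{\min(K_1,K_2),\,N_1+N_2}^{(t)}\bigl(d(x,y)\bigr)
  \le \zeta_{K_1,N_1}^{(t)}\bigl(d_1(x_1,y_1)\bigr)\,\zeta_{K_2,N_2}^{(t)}\bigl(d_2(x_2,y_2)\bigr),
\end{equation*}
since then the left-weighted measure is dominated by the right-weighted one and pushforward is monotone. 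The second inequality of condition (ii) follows by the symmetric argument with $t$ replaced by $1-t$ and the roles of $x,y$ interchanged, and the passage from product sets to all measurable $B$ is the routine step already subsumed in this measure-theoretic formulation.

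The main obstacle is precisely the coefficient inequality above, which is the analytic heart of the statement. I would first reduce to the case $K_1=K_2=\min(K_1,K_2)=:K$: since $\zeta_{K,N}^{(t)}(\theta)$ is monotone in $K$ (lowering $K$ decreases it, compare \Thmenum{mcp2}{mcp2.d}), replacing each $K_i$ by the smaller value $K$ only shrinks the right-hand side, so it suffices to prove the more demanding symmetric inequality. When $K=0$ one has $\zeta_{0,N}^{(t)}\equiv t^{N}$, and the inequality becomes the identity $t^{N_1+N_2}=t^{N_1}\,t^{N_2}$. For $K\neq0$ the remaining work is a one-variable estimate comparing $\bigl(\sinh(t\theta\sqrt{-K/(N-1)})/\sinh(\theta\sqrt{-K/(N-1)})\bigr)^{N-1}$ at the combined dimension $N=N_1+N_2$ against the product of the two factor ratios at dimensions $N_1$ and $N_2$, the difficulty being that the curvature normalisation $K/(N-1)$ differs in all three terms. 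This is where the genuine computation lies; it can be handled through the concavity and log-convexity properties of these model distortion coefficients (equivalently, by reading them as volume ratios in the constant-curvature model spaces), which is exactly the estimate carried out in~\cite{Ohta07}.
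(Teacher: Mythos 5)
The paper offers no proof of this lemma at all --- it is quoted verbatim from~\cite{Ohta07} --- so there is nothing internal to compare your argument against; what you have written is, in outline, a faithful reconstruction of how the cited proof goes. The parts you actually carry out are correct: the tensor product of the two Markov kernels is the right candidate; with the product metric $d=d_1+d_2$ the pair of $t$-intermediate points is indeed $t$-intermediate in the product (both summands scale by $t$, so their sum does); the rewriting of condition~(ii) as $(\gamma_t(x,\cdot))_*[\zeta\,\mu]\le\mu$ is exactly equivalent to the set-wise formulation and factorises correctly under the product map; and the reduction to a common curvature bound $K=\min(K_1,K_2)$ via monotonicity of $\zeta_{K,N}^{(t)}$ in $K$ is legitimate, since lowering $K$ only shrinks the right-hand side of the inequality you need.

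The gap is that the one step carrying all the analytic content, namely
\begin{equation*}
  \zeta_{K,\,N_1+N_2}^{(t)}(\theta_1+\theta_2)
  \;\le\; \zeta_{K,N_1}^{(t)}(\theta_1)\,\zeta_{K,N_2}^{(t)}(\theta_2)
  \qquad (K<0),
\end{equation*}
is asserted rather than proved: ``concavity and log-convexity properties of these model distortion coefficients'' is not an argument, and the inequality is genuinely delicate because the three $\sinh$-ratios are evaluated at three different curvature normalisations $\sqrt{-K/(N-1)}$ \emph{and} the leading factors of $t$ do not match ($t$ on the left versus $t^2$ on the right, so after cancelling one $t$ you must absorb an extra factor of $t$ into the product of ratios). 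Everything you wrote reduces the lemma to this inequality; nothing you wrote establishes it, so as a self-contained proof the proposal is incomplete --- it is a correct reduction plus a citation, which is essentially the same status as the paper's own bare citation. One further caveat worth noting: your monotonicity reduction (and the explicit formula for $\zeta_{K,N}^{(t)}$ given in the paper) only covers $K\le 0$; for positive lower bounds the product genuinely degrades the curvature constant (already for Ricci bounds on products of spheres), which is why the precise statement in~\cite{Ohta07} involves $\min\{K_1,K_2,0\}$ rather than $\min\{K_1,K_2\}$. If you want a complete proof you must either prove the displayed coefficient inequality (e.g.\ by interpreting both sides as volume-distortion ratios in the model spaces and using $\sinh(ts)\le t\sinh(s)$ together with a careful handling of the dimensional exponents) or restrict honestly to the case $K_1=K_2=0$, where it is the identity $t^{N_1+N_2}=t^{N_1}t^{N_2}$.
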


\begin{example}
  Let $(M,d,\mu)$ be a $(K,N)$-MCP space. Then $M \times M$ is a
  $(K,2N)$-MCP space. This can be seen as a physical model of the
  state space of two distinguishable particles.
 
  Let $\map \phi {M \times M}{M \times M}$ be the isometry given by
  $\phi(x,y)=(y,x)$. Then $(M \times M)/{\sim}$ with ${(x,y) \sim
    \phi(x,y)}$ is a manifold-like space. This corresponds to the
  state space of two indistinguishable particles. The same
  construction applies to multi-particle systems.
\end{example}

\begin{examples}
  \label{ex:mcp}
  Some concrete examples and counter-examples:
  \begin{enumerate}
  \item
    \label{mcp.a}
    If $M$ is a flat cone (i.e. a wedge like segment of the unit
    disk in $\R^2$ with boundaries identified) it satisfies the
    $(0,2)$-MCP and is an MCP space. The Alexandrov curvature is
    $+\infty$ at the cone point and zero elsewhere.
  \item
    \label{mcp.b}
    Let $M$ be constructed as follows. Cut open the unit disk in
    $\R^2$ along the negative $x$-axis and glue in another quarter of
    the unit disk. $M$ is a pseudo cone with angle $5\pi/2$. It has
    Alexandrov curvature $-\infty$ at the cone point and does
    \emph{not} satisfy the $(K,N)$-MCP for any $K,N$
    (see~\cite{Sturm06b}) but $M$ is a manifold-like space (glued out
    of $3$ pieces to make the Lipschitz domains convex).

  \item
    \label{mcp.c}
    Let $M$ consist of two copies of the unit disk in $\R^2$ glued
    together at the origin. This is a manifold-like space but does not
    satisfy the $(K,N)$-MCP.

  \item
    \label{mcp.d}
    Let $M$ be the set of points in $\R^3$ that is the union of
    $\set{(x,y,z)}{x^2+y^2 \le 1, z=0}$ and $\set{(x,y,z)}{x^2+z^2 \le
      1, x,z \ge 0, y=0}$.  This is a manifold-like space.

  \item
    \label{mcp.e}
    Let $M:=\set{(x,y) \in [0,1]^2}{y \le x^2}$. Then the $\eps$-balls
    around $(0,0)$ have volume proportional to $\eps^3$. Hence the
    dimension \Ass{uniform_dimension} is \emph{not} satisfied and this is neither
    an MCP space nor a manifold-like space.

  \item
    \label{mcp.f}
    Let $M = [0,1]\times [0,1]/{\sim}$ where $(x,0) \sim (0,0)$ for
    all $x \in [0,1]$. Then the $\eps$-balls around $(0,0)$ have
    volume proportional to $\eps$. Hence the dimension \Ass{uniform_dimension} is
    \emph{not} satisfied and this is neither an MCP space nor a
    manifold-like space.
  \end{enumerate}
\end{examples}

%
\section{The natural Dirichlet forms}
\label{sec:dir.forms}
%

\subsection{Definition of the natural Dirichlet form}
\label{ssec:dir.mcp}
There exists a natural Dirichlet form on MCP spaces and it induces a
Dirichlet form on manifold-like spaces.

\begin{definition}[\cite{Sturm06b}]
  \label{def:dir.form1}
  Let $(M,d,\mu)$ be a metric measure space. Let
  \begin{align*}
    \qf E_r(f) := \int_M \frac{N}{r^N} \int_{B_r^*(x)}
    \left(\frac{f(y)-f(x)}{d(y,x)}\right)^2 \dd \mu(y) \dd\mu(x)
  \end{align*}
  for all $f \in \LipCont M$.
\end{definition}

\begin{theorem}[see~\cite{Sturm06b}]
  \label{thm:dirichlet_form_existence}
  Assume $(M,d,\mu)$ is an MCP space. Then the limit 
  \begin{align*}
   \qf E(f):= \lim_{r \to 0} \qf E_r(f)
  \end{align*}
exists for all $f \in \LipCont
  M$. Furthermore the closure of $\qf E$ is a regular strongly local
  Dirichlet form on $(M,d,\mu)$ with core $\LipCont M$.
\end{theorem}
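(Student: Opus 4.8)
The plan is to organise everything around the polarised bilinear forms $\qf E_r(f,g)$ coming from \Def{dir.form1}, first establishing that $\lim_{r\to0}\qf E_r(f)$ exists on the Lipschitz core and then verifying the four defining properties of a regular strongly local Dirichlet form for the closure. For existence of the limit I would start from a uniform bound: writing $L$ for the Lipschitz constant of $f$, the integrand in \Def{dir.form1} is at most $L^2$, so by $N$-Alfohrs regularity \eqref{eq:alfohrs-reg} the inner integral is bounded by $N c L^2$ uniformly in $x$ and $r$, whence $\qf E_r(f)\le N c L^2\,\mu(M)$ uniformly. To upgrade this boundedness to genuine convergence I would prove that the inner integral converges for $\mu$-almost every $x$ and then apply dominated convergence. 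For the pointwise statement I would use the MCP structure: for $\mu$-a.e.\ $x$ the contraction kernels $P_t$ provide geodesic ``polar coordinates'' based at $x$, turning the inner integral into an average of squared radial difference quotients of $f$; letting $r\to0$ and combining the volume density $\tau(x)=\lim_{r\to0}\mu(B_r(x))/r^N$ established above with the a.e.\ differentiability of $f$ along these geodesics identifies the limit with a carr\'e-du-champ $\Gamma(f)(x)$, so that $\qf E(f)=\int_M\Gamma(f)\dd\mu$. This is the analytically heaviest ingredient and I would lean on \cite{Sturm06b}.

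Next I would verify closability, which is what gives meaning to ``the closure of $\qf E$''. The cleanest route is to use the integral representation $\qf E(f)=\int_M\Gamma(f)\dd\mu$ and recognise $\Gamma$ as (a constant multiple of) the minimal weak upper gradient; the associated energy functional is $\Lsqr M$-lower semicontinuous by construction, so closability follows by a Fatou-type argument: if $f_n\to0$ in $\Lsqr M$ and $(f_n)$ is $\qf E$-Cauchy, lower semicontinuity forces $\qf E(f_n)\to0$. Granting closability, the closure $(\qf E,\dom\qf E)$ is a closed, non-negative, symmetric form, and it is densely defined because $\LipCont M$ is dense in $\Lsqr M$.

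It then remains to check the Markov, strong locality and regularity properties, all of which are easiest on the core $\LipCont M$ and then passed to the closure. For the Markov property, the unit contraction $f^\#$ of a Lipschitz function is again Lipschitz and satisfies $\abs{f^\#(y)-f^\#(x)}\le\abs{f(y)-f(x)}$ pointwise, so $\qf E_r(f^\#)\le\qf E_r(f)$ for every $r$ and hence $\qf E(f^\#)\le\qf E(f)$ in the limit; thus the unit contraction operates. For strong locality I would take $f$ constant on a neighbourhood of $\supp g$: in the polarised integrand the factor $(f(y)-f(x))(g(y)-g(x))$ then vanishes except on an $r$-neighbourhood of $\bd(\supp g)$, whose contribution tends to $0$ as $r\to0$, giving $\qf E(f,g)=0$. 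Finally, regularity is immediate: $\LipCont M$ is a subalgebra of $\Cont M$ containing the constants and separating points (via $x\mapsto d(x,x_0)$), hence dense in $\Cont M$ by Stone--Weierstrass, and it is dense in $\dom\qf E$ for $\norm[\qf E]\cdot$ by construction of the closure, so $\LipCont M$ is a core and $\qf E$ is regular.

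The main obstacle is the pair formed by the pointwise convergence of the inner integral and the closability, since both hinge on extracting from the MCP alone a well-defined gradient, equivalently the carr\'e-du-champ $\Gamma(f)$ and its $\Lsqr M$-lower semicontinuity, for arbitrary Lipschitz $f$; this is exactly the technical heart of \cite{Sturm06b}. The Markov, locality and regularity properties are comparatively routine once the limit form and its lower semicontinuity are available.
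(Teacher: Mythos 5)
First, note that the paper does not prove this theorem at all: it is quoted verbatim from \cite{Sturm06b}, so there is no in-paper argument to compare yours against. Judged on its own terms, your proposal gets the routine verifications right. The uniform bound $\qf E_r(f)\le NcL^2\mu(M)$ via \eqref{eq:alfohrs-reg} is correct; the Markov property follows exactly as you say from the pointwise inequality $\abs{f^\#(y)-f^\#(x)}\le\abs{f(y)-f(x)}$; for strong locality the situation is even better than you claim, since if $f$ is constant on an open $V\supset\supp g$ then the polarised integrand requires one of $x,y$ to lie in $\supp g$ and one to lie outside $V$, which is impossible once $r<d(\supp g, M\setminus V)$, so $\qf E_r(f,g)=0$ identically for small $r$; and regularity via Stone--Weierstrass plus the definition of the closure is fine.

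The genuine gap is in your mechanism for the existence of the limit, which is the heart of the theorem. You propose to prove $\mu$-a.e.\ convergence of the inner integral by setting up ``geodesic polar coordinates'' from the Markov kernels $P_t$ and invoking a.e.\ differentiability of $f$ along geodesics. That step would fail as stated: a Rademacher-type theorem for Lipschitz functions on a general MCP space is not available at this level of generality (such results require substantially more structure, and proving one would be at least as hard as the theorem itself), and the kernels $P_t$ select intermediate points rather than furnishing a coordinate system. The argument that actually works --- and which this paper itself records and reuses in \Lem{subpartitioning_lemma_proven} --- is the subpartitioning inequality
\begin{equation*}
  \qf E_r(f) \le \sum_{i=1}^{n}(t_i-t_{i-1})\, \qf E_{(t_i-t_{i-1})r}(f),
\end{equation*}
proved by inserting $t_i$-intermediate points via the MCP kernels. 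With the partition $\{0,\tfrac12,1\}$ this makes $n\mapsto\qf E_{2^{-n}r}(f)$ nondecreasing, and together with your uniform upper bound the limit exists along dyadic scales (a further comparison argument upgrades this to the full limit $r\to0$). This monotone structure also repairs your closability step more cleanly than an appeal to minimal weak upper gradients: each $\qf E_{r}$ is $\Lsqr M$-lower semicontinuous by Fatou, an increasing limit of lower semicontinuous functionals is lower semicontinuous, and lower semicontinuity of $\qf E$ on $\Lsqr M$ is equivalent to closability. So your overall architecture is sound, but the two steps you flag as the ``technical heart'' need to be routed through the subpartitioning/monotonicity argument rather than through pointwise differentiation.
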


\begin{remark}
  This and other theorems quoted from~\cite{Sturm06b} also hold for
  non-compact MCP spaces. In this case one needs to replace the
  function spaces with the compactly supported versions.
\end{remark}


We now define a Dirichlet form $\bar{\qf E}$ on the quotient $\bar M$
from our Dirichlet form $\qf E$ on the original space $M$ via $\map
\pi M {\bar M=M/{\sim}}$.  We can see $\bar {\qf E}$ as a
restriction of the form $\qf E$ (see remark below):
\begin{proposition}
  \label{prp:dirichlet_form_on_manifold-like}
  Let $(\bar M, \bar d,\bar \mu)$ be a manifold-like space obtained
  from the MCP space $(M,d,\mu)$.  Then the Dirichlet form $\qf E$ on
  $M$ induces a Dirichlet form $\bar{\qf E}$ on $\bar M$ as a pull
  back. This form $\bar{\qf E}$ is also regular, strongly local and
  has core $\LipCont {\bar M}$.
\end{proposition}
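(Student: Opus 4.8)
The plan is to define $\bar{\qf E}$ by pulling $\qf E$ back along the projection $\map \pi M {\bar M}$: set $\bar{\qf E}(\bar f):=\qf E(\bar f\circ\pi)$ on $\LipCont{\bar M}$, take the closure, and transfer closability, the Dirichlet property, strong locality and regularity from $\qf E$ along $\pi$. Since a single glueing again produces a metric measure space, I would induct on the number of glueings, with inductive hypothesis ``the space carries a regular strongly local Dirichlet form whose core is its Lipschitz functions'' and base case \Thm{dirichlet_form_existence}; it then suffices to treat one glueing, so $\pi$ is finite-to-one, $\bar d(\bar x,\bar y)=\min\set{d(x,y)}{\pi(x)=\bar x,\ \pi(y)=\bar y}$, and $\bar\mu=\pi^*\mu$ as in \Def{identif}.

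The technical core is a correspondence lemma: $\bar f\mapsto\bar f\circ\pi$ is a bijection from $\LipCont{\bar M}$ onto the $\pi$-invariant Lipschitz functions on $M$ that preserves Lipschitz constants and is isometric for the $\Lsqrspace$-norms. One direction is immediate because $\pi$ is $1$-Lipschitz (take $x=x'$ and $y=y'$ in the formula for $\bar d$), so $\bar f\circ\pi$ is Lipschitz with the same constant; for the converse one evaluates an invariant Lipschitz $f$ on representatives realizing the minimum defining $\bar d$, which gives $\Lip(\bar f)\le\Lip(f)$ for the descended function. The identity $\normsqr[\Lsqr M]{\bar f\circ\pi}=\normsqr[\Lsqr{\bar M}]{\bar f}$ is immediate from $\bar\mu=\pi^*\mu$, and together with $\bar{\qf E}(\bar f)=\qf E(\bar f\circ\pi)$ it shows that $\bar f\mapsto\bar f\circ\pi$ is an isometric embedding of $(\LipCont{\bar M},\norm[\bar{\qf E}]{\cdot})$ into $(\dom\qf E,\norm[\qf E]{\cdot})$, which extends to $\dom\bar{\qf E}$ by continuity. (In every example of \Subsec{examples} the glueing locus is lower-dimensional, hence $\mu$-null; then the embedding is onto, $\Lsqr{\bar M}\cong\Lsqr M$, and the whole difference between $\qf E$ and $\bar{\qf E}$ sits in the form domain. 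I would record this but not rely on it.)

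Granting the correspondence, the structural properties transfer formally. Closability follows from that of $\qf E$: if $\bar f_n\to0$ in $\Lsqr{\bar M}$ with $\bar{\qf E}(\bar f_n-\bar f_m)\to0$, then $\bar f_n\circ\pi\to0$ in $\Lsqr M$ with $\qf E$-Cauchy images, so $\qf E(\bar f_n\circ\pi)\to0$, i.e.\ $\bar{\qf E}(\bar f_n)\to0$; taking the closure makes $\LipCont{\bar M}$ a core by construction. The Dirichlet property passes through because the unit contraction commutes with the pull-back, $(\bar f^\#)\circ\pi=(\bar f\circ\pi)^\#$, giving $\bar{\qf E}(\bar f^\#)=\qf E((\bar f\circ\pi)^\#)\le\bar{\qf E}(\bar f)$. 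For strong locality, by polarisation $\bar{\qf E}(\bar f,\bar g)=\qf E(\bar f\circ\pi,\bar g\circ\pi)$, and if $\bar f$ is constant on a neighbourhood $\bar U$ of $\supp\bar g$ then $\bar f\circ\pi$ is constant on the open set $\pi^{-1}(\bar U)\supseteq\supp(\bar g\circ\pi)$, so strong locality of $\qf E$ forces $\bar{\qf E}(\bar f,\bar g)=0$. Regularity splits into density of $\LipCont{\bar M}$ in $\dom\bar{\qf E}$, which is automatic since $\bar{\qf E}$ is the closure of its restriction to $\LipCont{\bar M}$, and density of $\LipCont{\bar M}$ in $\Cont{\bar M}$ in the supremum norm, which holds on the compact space $\bar M$ by Stone--Weierstrass. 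The main obstacle is the correspondence lemma: it is the only place where the quotient metric $\bar d$ and the behaviour at the glueing locus genuinely enter, and once it is established every remaining property is a routine transfer along $\pi$.
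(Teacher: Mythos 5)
Your proposal is correct and follows essentially the same route as the paper's proof: define $\bar{\qf E}(\bar f):=\qf E(\bar f\circ\pi)$ on $\LipCont{\bar M}$ using that $\pi^*$ is an $\Lsqrspace$-isometry onto the $\sim$-invariant Lipschitz functions, take the closure, transfer the unit contraction and locality, and get regularity from the closure construction plus Stone--Weierstrass. The only differences are organisational (you induct over single glueings where the paper pulls back through the composite projection in one step) and that you spell out details the paper leaves implicit, namely closability, the Lipschitz-constant correspondence in both directions, and strong locality rather than mere locality.
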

\begin{proof}
  As $\bar \mu$ is the push forward measure of $\mu$, the map
  $\map{\pi^*}{\Lsqr {\bar M}}{\Lsqr M}$, $\pi^* \bar f
  := \bar f \circ \pi$, is an isometry onto its image.
  The image of $\LipCont{\bar M} \subset \Lsqr{\bar M}$ under $\pi^*$
  is given by
  \begin{equation*}
    \pi^*(\LipCont {\bar M})
    = \bigset{f \in \LipCont M}{\text{$f(x)=f(y)$ whenever $x \sim y$}}
    \subset \dom \qf E
  \end{equation*}
  Hence for $\bar f \in \LipCont{\bar M}$ we define $\bar{\qf E}(\bar
  f):= \qf E (\pi^*\bar f)$. We then define $\bar{\qf E}$ to be the
  closure of this form with respect to the norm given by $\normsqr[\qf
  {\bar E}]\cdot =\normsqr[\Lsqr{\bar M}] \cdot + \bar{\qf E}(\cdot)$.

  The unit contraction property is inherited from $\qf E$, i.e. $\bar
  f \in \dom \bar{\qf E}$ implies that $\bar f^\# \in \dom \bar{\qf
    E}$ and $\bar{\qf E}(\bar f^{\#}) \le \bar{\qf E} (\bar
  f)$. Similarly, locality is inherited from $\qf E$.
  
  For the regularity of $\bar{\qf E}$, we note first that
  $\LipCont{\bar M} \subset \Cont {\bar M} \cap \dom \bar{\qf E}$ by
  definition. Hence $\Cont {\bar M} \cap \dom \bar{\qf E}$ is dense in
  $\dom \bar{\qf E}$. By Stone-Weierstrass, $\LipCont{\bar M}$ is
  dense in $\Cont{\bar M}$ in the supremum norm. Thus $\Cont {\bar M}
  \cap \dom \bar{\qf E}$ in also dense in $\Cont {\bar M}$ in the
  supremum norm.
\end{proof}

\begin{remark}
  By definition, $\map{\pi^*}{\dom \bar {\qf E}} {\dom \qf E}$
  (endowed with the natural norms $\norm[\bar {\qf E}] \cdot$ and
  $\norm[\qf E] \cdot$) is also an isometry onto its image (as
  \begin{equation*}
    \normsqr[\qf E] {\pi^* \bar f}
    = \normsqr[\Lsqr M]{\pi^*\bar f} + \qf E(\pi^* \bar f)
    =  \normsqr[\Lsqr{\bar M}] {\bar f} + \bar{\qf E}(\bar f)
  \end{equation*}
  for $\bar f$ in the core $\LipCont{\bar M}$). Hence, we can also 
  work with the corresponding image form $\dbar{\qf E}$ on $\Lsqr M$,
  which is the restriction $\dbar {\qf E} := \qf E \restr{\dom
    \dbar{\qf E}}$ of $\qf E$ with domain given by
  \begin{equation*}
    \dom \dbar{\qf E} 
    = \clo[{\norm[\qf E]\cdot}]{\bigset{f \in \LipCont M}
    {\text{$f(x)=f(y)$ whenever $x \sim y$}}}
    \subset \dom \qf E.
  \end{equation*}
  
  Note that it can happen that $\dom \dbar{\qf E} = \dom \qf E$
  although $\pi^* \LipCont{\bar M} \subsetneq \LipCont M$. This
  happens because the $\norm[\qf E]\cdot$-norm cannot see subsets of
  codimension at least two (see e.g.~\cite{ChavelFeldman78}). This
  effect can be seen in \Exenum{mcp}{mcp.c}). The Dirichlet form of
  two copies of the unit disk identified at a point is the same as the
  Dirichlet form on two disjoint copies.
\end{remark}

\subsection{Local isometries on manifold-like spaces}
\label{ssec:local.isom.manifold_like_spaces}

\begin{proposition}
  \label{prp:isomorphism_induces_form_equivalence}
  \indent
  \begin{enumiii}
  \item
    \label{iso_ind.a}
    Let $(M,d,\mu)$ and $(\oS M, \oS d, \oS \mu)$ be two MCP
    spaces with associated Dirichlet forms $\qf E$ and $\oS{\qf E}$ as
    constructed in \Thm{dirichlet_form_existence}.  If there is a
    measure preserving isometry $\map \psi U {\oS U=\psi(U)}$ for open
    subsets $U \subset M$ and $\oS U \subset \oS M$, then $\qf E$ and
    $\oS{\qf E}$ agree on $U$ and $\oS U$.

  \item
    \label{iso_ind.b}
    Let $(\bar M,\bar d, \bar \mu)$ and $(\bar{\oS M},\bar{\oS d},
    \bar{\oS \mu})$ be two manifold-like spaces with associated
    Dirichlet forms $\bar{\qf E}$ and $\bar{\oS{\qf E}}$ as constructed in
    \Prp{dirichlet_form_on_manifold-like}.  Assume that $\map \pi M
    {\bar M}$ and $\map{\oS \pi}{\oS M}{\oS {\bar M}}$ are the
    corresponding projections from MCP spaces $M$ and $\oS M$,
    respectively (see \Def{mfd-like}).
    
    If there is a measure preserving isometry $\map {\bar \psi} {\bar
      U} {\bar{\oS U}=\bar \psi(\bar U)}$ for open subsets $\bar U
    \subset \bar M$ and $\bar{\oS U} \subset \bar{\oS M}$ that lifts
    to a measure preserving isometry $\map \psi U {\oS U}$ with
    $U=\pi^{-1}(\bar U)$ and $\oS U=(\oS \pi)^{-1}(\bar{\oS U})$
    (i.e., $\bar \psi \circ \pi = \oS \pi \circ \psi$), then $\bar{\qf
      E}$ and $\bar{\oS{\qf E}}$ agree on $U$ and $\oS U$.
  \end{enumiii}
\end{proposition}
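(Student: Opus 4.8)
The plan is to establish part \itemref{iso_ind.a} directly from the explicit approximating functionals $\qf E_r$ of \Def{dir.form1}, exploiting their manifest locality, and then to reduce part \itemref{iso_ind.b} to \itemref{iso_ind.a} by transporting functions through the pullbacks $\pi^*$ and $(\oS\pi)^*$, which by \Prp{dirichlet_form_on_manifold-like} are form-isometries onto their images.

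For \itemref{iso_ind.a} I would start on the core $\LipCont M$. Fix $f \in \LipCont M$ with $\supp f \subset U$. Since $M$ is compact and $\supp f$ is a compact set disjoint from the closed set $M \setminus U$, the number $\delta := d(\supp f, M\setminus U)$ is positive, and likewise $\delta' := \oS d(\psi(\supp f), \oS M \setminus \oS U) > 0$. The integrand of $\qf E_r(f)$, namely $\bigl((f(y)-f(x))/d(y,x)\bigr)^2$, is supported on pairs with $d(x,y)<r$ and $\{x,y\}\cap\supp f \neq \emptyset$; hence for $r < \min(\delta,\delta')$ only pairs $(x,y)$ with both points in $U$ contribute, and similarly for $\oS{\qf E}_r(\psi_* f)$ only pairs with both points in $\oS U$ contribute. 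On these regions $\psi$ is a distance- and measure-preserving bijection carrying $f$ to $\psi_* f$, so the two integrals coincide once the normalising constants agree. To see $N = \oS N$ I would note that for $x\in\supp f$ and small $r$ the ball $B_r(x)\subset U$ satisfies $\mu(B_r(x)) = \oS\mu(B_r(\psi(x)))$, and $N$- resp.\ $\oS N$-Alfohrs regularity then force $N = \oS N$. Consequently $\qf E_r(f) = \oS{\qf E}_r(\psi_* f)$ for all small $r$, and letting $r \to 0$ via \Thm{dirichlet_form_existence} gives $\qf E(f) = \oS{\qf E}(\psi_* f)$; moreover $\psi_* f$ is Lipschitz and supported away from $\bd \oS U$, hence lies in $\LipCont{\oS M}\subset\dom\oS{\qf E}$.

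To pass from the core to an arbitrary $f\in\dom\qf E$ with $\supp f\subset U$, I would approximate $f$ in the form norm $\norm[\qf E]\cdot$ by Lipschitz functions $f_n$ whose supports still lie in $U$; then the established identity together with the unitarity of $\psi_*$ on $\Lsqr U$ shows $(\psi_* f_n)$ is $\oS{\qf E}$-Cauchy, its limit lies in $\dom\oS{\qf E}$ and equals $\psi_* f$ by $\Lsqr{\oS M}$-convergence, and both forms pass to the limit. The step I expect to be the main obstacle is exactly this approximation: producing Lipschitz approximants whose supports remain inside $U$. Here I would multiply a core approximation of $f$ by a fixed Lipschitz cutoff $\chi$ equal to $1$ on a neighbourhood of $\supp f$ and supported in $U$, and control $\qf E(\chi g)$ in terms of $\qf E(g)$ and $\norm[\Lsqr M]g$ using the energy-measure (carr\'e du champ) calculus available for strongly local regular Dirichlet forms; strong locality ensures that the cutoff does not disturb the energy on the region where $\chi \equiv 1$.

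For \itemref{iso_ind.b} I would verify \Defenum{forms.ops.agree}{agree.a} for $\bar{\qf E}, \bar{\oS{\qf E}}$ and $\bar\psi$ on $\bar U, \bar{\oS U}$. Given $\bar f\in\dom\bar{\qf E}$ with $\supp\bar f\subset\bar U$, set $f := \pi^*\bar f = \bar f\circ\pi$, so that $\supp f\subset\pi^{-1}(\bar U)=U$ and $\bar{\qf E}(\bar f)=\qf E(f)$. The key computation is the pointwise identity $(\oS\pi)^*(\bar\psi_*\bar f) = \psi_* f$ on $\oS M$: from the compatibility $\bar\psi\circ\pi = \oS\pi\circ\psi$ one gets $\bar\psi^{-1}\circ\oS\pi = \pi\circ\psi^{-1}$ on $\oS U$, so both functions equal $\bar f\circ\pi\circ\psi^{-1}$ on $\oS U$ and vanish off it. As this function factors through $\oS\pi$ it lies in the image of $(\oS\pi)^*$, and since $(\oS\pi)^*$ is a form-isometry onto $\dom\dbar{\oS{\qf E}}$, I obtain $\bar{\oS{\qf E}}(\bar\psi_*\bar f) = \oS{\qf E}(\psi_* f)$. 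Applying \itemref{iso_ind.a} to $M,\oS M$ and $\psi$ gives $\psi_* f\in\dom\oS{\qf E}$ and $\qf E(f)=\oS{\qf E}(\psi_* f)$, so chaining yields $\bar{\qf E}(\bar f) = \bar{\oS{\qf E}}(\bar\psi_*\bar f)$. For the domain membership $\bar\psi_*\bar f\in\dom\bar{\oS{\qf E}}$ I would approximate $\bar f$ by Lipschitz functions supported in $\bar U$ (the same obstacle as above, now on $\bar M$, which is again regular with Lipschitz core by \Prp{dirichlet_form_on_manifold-like}) and pass to the limit exactly as in \itemref{iso_ind.a}.
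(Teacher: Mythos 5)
Your proof follows the same route as the paper's: part (i) rests on the observation that $\qf E_r$ is built purely from $d$ and $\mu$ and is supported near the diagonal, so the measure-preserving isometry identifies $\qf E_r(f)$ with $\oS{\qf E}_r(\psi_*f)$ once $r < d(\supp f, M\setminus U)$, after which one passes to the limit $r\to 0$; and part (ii) is the same chain of identities $\bar{\qf E}(\bar f)=\qf E(\pi^*\bar f)=\oS{\qf E}(\psi_*\pi^*\bar f)=\oS{\qf E}((\oS\pi)^*\bar\psi_*\bar f)=\bar{\oS{\qf E}}(\bar\psi_*\bar f)$ obtained from the lift property. You are more careful than the paper at two points---checking that the normalising exponents $N$ and $\oS N$ agree via Ahlfors regularity, and supplying the cutoff/closure argument that extends the identity from the Lipschitz core to every $f\in\dom\qf E$ with $\supp f\subset U$ (which the definition of ``agree on $U$ and $\oS U$'' formally requires and the paper's proof leaves implicit)---but these are refinements of the same argument rather than a different one.
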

\begin{proof}
  \itemref{iso_ind.a}~The Dirichlet form on the MCP space $(M,d,\mu)$
  is defined by $\qf E(f)= \lim_{r \to 0}\qf E_r(f)$ and similarly for
  $(\oS M, \oS d, \oS \mu)$.  As $\qf E_r$ and $\oS{\qf E}_r$ are
  expressed entirely in terms of the metric $d$ and the measure $\mu$,
  we have $\qf E_r(f)=\oS {\qf E}_r(\psi_* f)$ for $f \in \LipCont M$
  with $\supp f \subset U$ and $0<r < d(\supp f, M \setminus
  U)$.
    Passing to the limit $r \to 0$ yields the
  first result. 

  \itemref{iso_ind.b}~By part~\itemref{iso_ind.a}, the lifted forms
  $\qf E$ and $\oS{\qf E}$ agree on $U$ and $\oS U$, i.e., $\qf
  E(f)=\oS{\qf E}(\psi_* f)$.  Moreover,
  \begin{equation*}
    \bar{\qf E}(\bar f)
    =\qf E(\pi^*\bar f)
    =\oS{\qf E}(\psi_* \pi^* \bar f)
    =\oS {\qf E}((\oS \pi)^*\bar \psi_* \bar f)
    =\bar{\oS {\qf E}}(\psi_* \bar f)
  \end{equation*}
  using the lift property of $\bar \psi$
  and $\psi$.
\end{proof}

\begin{corollary}
  \label{cor:isomorphism_induces_form_equivalence}
  Under the assumptions of the previous proposition, the associated
  operators on MCP resp.\ manifold-like spaces also agree.
\end{corollary}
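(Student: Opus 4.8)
The plan is to deduce this directly from \Lem{forms.ops.agree}, which asserts that for \emph{local} Dirichlet forms the two notions ``the forms agree on $U$ and $\oS U$'' and ``the associated operators agree on $U$ and $\oS U$'' are equivalent. The entire proof therefore reduces to checking the two hypotheses of that lemma in each of the two settings of \Prp{isomorphism_induces_form_equivalence}: that the forms in question are local Dirichlet forms, and that they agree on $U$ and $\oS U$.

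First I would verify locality of the forms. In the MCP case, \Thm{dirichlet_form_existence} states that $\qf E$, and likewise $\oS{\qf E}$, is a regular \emph{strongly} local Dirichlet form; in the manifold-like case, \Prp{dirichlet_form_on_manifold-like} gives the same for the pulled-back forms $\bar{\qf E}$ and $\bar{\oS{\qf E}}$. Strong locality implies locality: if $f,g \in \dom \qf E$ have disjoint supports, then $f$ vanishes, hence is constant, on the open set $M \setminus \supp f$, which is a neighbourhood of $\supp g$, so strong locality forces $\qf E(f,g)=0$. Thus in both parts the forms satisfy the locality hypothesis of \Lem{forms.ops.agree}.

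Next, the agreement of the forms on $U$ and $\oS U$ is precisely the conclusion of the preceding proposition: part~\itemref{iso_ind.a} supplies this for the two MCP spaces via $\qf E(f)=\oS{\qf E}(\psi_* f)$, and part~\itemref{iso_ind.b} supplies it for the two manifold-like spaces. With both hypotheses in hand, \Lem{forms.ops.agree} yields that $D$ and $\oS D$ (respectively, the operators associated to $\bar{\qf E}$ and $\bar{\oS{\qf E}}$ through the correspondence $\iprod{Df}{g}=\qf E(f,g)$ of \Thm{semigroup_form_operator}) agree on $U$ and $\oS U$ in the sense of \Defenum{forms.ops.agree}{agree.b}, which is the assertion.

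I do not expect a genuine obstacle here, since the corollary is essentially a bookkeeping application of \Lem{forms.ops.agree}. The one point demanding care is that the equivalence in that lemma is available only under the locality hypothesis, which is why one must invoke the strong locality guaranteed by \Thm{dirichlet_form_existence} and \Prp{dirichlet_form_on_manifold-like} rather than treat the forms as arbitrary Dirichlet forms; locality is exactly what lets one transfer the identity $\qf E(f,g)=\oS{\qf E}(\psi_*f,\psi_*g)$ into the statement $\psi_*(Df)=\oS D(\psi_* f)$, because it confines $Df$ to the support of $f$ inside $U$.
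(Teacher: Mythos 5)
Your argument is correct and follows exactly the paper's route: the paper's proof is the one-line observation that the corollary follows directly from \Lem{forms.ops.agree}, and your write-up simply makes explicit the verification of that lemma's hypotheses (locality via \Thm{dirichlet_form_existence} and \Prp{dirichlet_form_on_manifold-like}, and agreement of the forms via \Prp{isomorphism_induces_form_equivalence}). No difference in substance, only in the level of detail.
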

\begin{proof}
 This follows directly from \Lem{forms.ops.agree}.
\end{proof}


\subsection{The boundary conditions for the operator}
\label{ssec:bd_cond}

We defined a Dirichlet form and the associated operator in a quite general
setting.  In this section we are going to show that for nice spaces
the operator and the Dirichlet form are very natural and familiar. 

The main motivation for the definition of the Dirichlet form
\Def{dir.form1} in~\cite{Sturm06b} is the fact that if $M$ is a
Riemannian manifold, the corresponding form is $\qf E(f) = \int_M
\abssqr{\nabla f} \dd\mu$. Additionally, his definition makes sense in
a much broader metric space setting. This statement is also true in a
local version:

\begin{proposition}
  \label{prp:mcp-mfd}
  Assume that $(M,d,\mu)$ is a manifold-like space, and $\oS M$ a boundaryless
  Riemannian manifold with its natural metric $\oS d$ and Riemannian
  measure $\oS \mu$.  If there is a measure preserving isometry $\map
  \psi U {\oS U}$ with $U \subset M$ and $\oS U \subset \oS M$ open,
  then on $U$, the form $\qf E$ just acts as $\int_{\oS U}\abssqr{\nabla
    f}\dd \oS \mu$ and the operator $D$ acts as the Laplace Beltrami operator
  on $\oS U$.
\end{proposition}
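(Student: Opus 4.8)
The plan is to lift everything to the unglued MCP space, apply there Sturm's identification of his form with the classical energy, and descend through $\psi$. Since the proposition only concerns the action of $\qf E$ and $D$ on functions living on $U$, it suffices to treat $f \in \LipCont M$ with $\supp f \subset U$. By \Prp{dirichlet_form_on_manifold-like} the form on the manifold-like space is a pullback, $\qf E(f) = \qf E_0(\pi^* f)$, where $\pi$ denotes the projection from the underlying MCP space onto $M$, the lift is $\pi^* f = f \circ \pi$, and $\qf E_0 = \lim_{r \to 0}(\qf E_0)_r$ is the Sturm form of \Def{dir.form1} on that space. As each point of $M$ has only finitely many preimages and the gluing loci are closed and $\mu$-null, $\pi^{-1}(U)$ is a finite union of pieces on each of which $\pi$ is a measure-preserving local isometry, whose images cover $U$ and overlap only along these loci.

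On each such piece the approximating form $(\qf E_0)_r$ is \emph{local}: once $r$ is smaller than the distance from $\supp \pi^* f$ to the complement of $\pi^{-1}(U)$, the integrand of \Def{dir.form1} only couples nearby points, so the computation is carried, measure-preservingly and isometrically, onto $\oS U \subset \oS M$ by $\psi \circ \pi$, exactly by the mechanism of \Prpenum{isomorphism_induces_form_equivalence}{iso_ind.a}. Summing over the finitely many pieces and letting $r \to 0$ gives $\qf E(f) = \oS{\qf E}(\psi_* f)$, with $\oS{\qf E}$ the Sturm form on $\oS M$; the gluing loci contribute nothing in the limit since they are $\mu$-null and $\psi_* f$ is Lipschitz across them. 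I then invoke the motivating identification recalled above, due to \cite{Sturm06b}: on a boundaryless Riemannian manifold Sturm's form is the classical energy. Because $\psi_* f$ is supported in $\oS U$ and the energy is local, this reads $\qf E(f) = \int_{\oS U} \abssqr{\nabla(\psi_* f)} \dd \oS\mu$, which is the first assertion.

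For the operator, the form agreement just established on $U$ and $\oS U$, together with strong locality of both forms, lets me apply \Lem{forms.ops.agree}: the operators $D$ and $\oS D$ agree on $U$ and $\oS U$. On the boundaryless manifold $\oS M$, Green's formula identifies the operator of the energy form on the interior set $\oS U$ with the Laplace--Beltrami operator $\laplacian$, with no boundary terms arising. Transporting back through $\psi$ shows that $D$ acts as $\laplacian$ on $U$.

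The delicate point, and the main obstacle, is the behaviour at gluing loci lying inside $U$. Because $\psi$ identifies $U$ with a piece of a smooth boundaryless manifold, any such locus is a $\mu$-null set across which the glued sheets fit together smoothly, and one must verify that summing the sheetwise energies genuinely reproduces the intrinsic energy over $\oS U$ rather than picking up boundary contributions from the individual pieces. For the form this is handled by the nullity of the loci and the Lipschitz regularity of $\psi_* f$; for the operator one must further check that the Neumann-type conditions carried by each sheet combine, across the smooth locus, into the transmission conditions of the honest Laplace--Beltrami operator, so that no spurious boundary term survives. Once this compatibility at the gluing loci is secured, the remaining steps are routine consequences of the form--operator correspondence and Sturm's manifold computation.
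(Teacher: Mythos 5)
Your argument is correct and follows essentially the same route as the paper, whose entire proof is the one-line citation of \Prp{isomorphism_induces_form_equivalence} (with \Cor{isomorphism_induces_form_equivalence}, i.e.\ \Lem{forms.ops.agree}, supplying the operator statement): you have simply unfolded the proof of that proposition --- locality of the approximating forms $\qf E_r$, transport through the measure preserving isometry, passage to the limit, and Sturm's identification of his form with the classical Dirichlet energy on a manifold. Your extra care about gluing loci inside $U$ is a reasonable elaboration of a point the paper leaves implicit, but it does not change the method.
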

\begin{proof}
  This follows directly from
  \Prp{isomorphism_induces_form_equivalence}.
\end{proof}

\begin{definition}
  \label{def:r-fold}
  Assume that $(\bar M,\bar d,\bar \mu)$ is a manifold-like space with
  MCP lift $(M,d,\mu)$ and projection $\map \pi M {\bar M}$.  We say
  that $\bar U \subset \bar M$ is an \emph{$r$-fold smooth fibration}
  glued at a closed subset $\bar F \subset \bar M$ if the following
  holds:
  \begin{enumerate}
  \item $\bar U$ is open and connected and $\pi^{-1}(\bar
    U)=U=\bigdcup_{j=1}^r U_j$, where each $U_j$ is connected and the
    closure of each $U_j$ is isometric to a subset of a Riemannian
    manifold with smooth boundary.  The sets $U_j$ are called
    \emph{leaves}.

  \item $\bar F$ is connected and $\pi^{-1}(\bar F)=\bigdcup_{j=1}^r
    F_j$ with $F_j$ connected and $F_j \subset \bd U_j$, hence $F_j$
    is isometric to a subset of the boundary of the Riemannian
    manifold.
  \end{enumerate}
\end{definition}
To simplify notation, we assume that $U_j$ and $F_j$ are already
subsets of a Riemannian manifold (the former open in the interior, the
latter a closed subset of the boundary).  If $\map{\bar f}{\bar M}\R$
denote by $\map f M \R$ the lift of $\bar f$ onto $M$, i.e., $f \circ
\pi=\bar f$.  If $f$ is smooth enough on each $U_j$, we define
$\normder f_j$ as the normal (outward) derivative of $f$ on $\hat U_j
:= F_j \cup U_j$, and we pull back all functions $\normder f_j
\restr{F_j}$ formally defined on $F_j \subset M$ onto $\bar F$ via the
isometries and denote them by $\map {\normder \bar f_j} {\bar F}\R$.

Note that $U \setminus \bigcup_{j=1}^r F_j$ is naturally the same as
$\bar U \setminus \bar F$, as $\pi$ does not identify any points here.
 Moreover, these two
sets also have the same measure, and integrals over them agree. Therefore, we consider these sets as the same. 
\begin{proposition}
  \label{prp:kirchhoff}
  Let $(\bar M,\bar d,\bar \mu)$ be a manifold-like space obtained
  from the MCP space $(M,d,\mu)$.  Let $\bar U \subset \bar M$ be an
  $r$-fold smooth fibration with leaves $U_j$ glued at $F_j$.

  If $\bar f$ is in the domain of the associated operator $\bar D$ on
  $\bar M$ with $\supp \bar f \subset \bar U$ then $\bar f$ acts on
  $\bar U$ as the usual Laplacian ($\bar D \bar f = -\Delta \bar f$).

  Moreover, the normal derivatives on the leaves satisfy the so-called
  \emph{Kirchhoff} condition on the glued part $\bar F$. This means that
  \begin{equation*}
    \sum_{j=1}^r \normder \bar f_j =0 \qquad \textit{on } \bar F
  \end{equation*}
  and that $\bar f$ is continuous on $\bar F$.
\end{proposition}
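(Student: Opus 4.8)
The plan is to transfer everything to the lift on the MCP space $(M,d,\mu)$ and to reduce the boundary statement to Green's identity on each leaf. Write $f:=\pi^*\bar f$ for the lift of $\bar f$ and $f_j:=f\restr{U_j}$ for its restriction to the $j$-th leaf, and similarly $g:=\pi^*\bar g$, $g_j:=g\restr{U_j}$ for a test function $\bar g\in\dom\bar{\qf E}$ with $\supp\bar g\subset\bar U$. By the description of $\dom\bar{\qf E}$ in the remark following \Prp{dirichlet_form_on_manifold-like}, the lift $f$ satisfies the glueing condition $f(x)=f(y)$ whenever $x\sim y$, so in particular the boundary traces $f_j\restr{F_j}$ all coincide with $\bar f\restr{\bar F}$ under the identifying isometries. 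Using the defining relation $\iprod{\bar D\bar f}{\bar g}=\bar{\qf E}(\bar f,\bar g)$ of \Thm{semigroup_form_operator} together with \Prp{mcp-mfd}, which says that on $\bar U\setminus\bar F$ the form acts leafwise as the Dirichlet energy of a Riemannian manifold, one obtains
\begin{equation*}
  \iprod[\Lsqr{\bar M}]{\bar D\bar f}{\bar g}
  =\bar{\qf E}(\bar f,\bar g)
  =\sum_{j=1}^r\int_{U_j}\iprod{\nabla f_j}{\nabla g_j}\dd\mu .
\end{equation*}

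First I would establish the interior statement $\bar D\bar f=-\Delta\bar f$. Restricting the test functions to those $\bar g$ supported in the interior of a single leaf (equivalently in $\bar U\setminus\bar F$), the identity above is precisely the weak formulation of $\bar D\bar f=-\Delta f_j$ on the interior of $U_j$. Interior elliptic regularity for the Laplace--Beltrami operator then upgrades each $f_j$ to $H^2_{\loc}$ on the leaf interior, and since $\bar F$ has codimension at least one and is therefore $\bar\mu$-null, we conclude $\bar D\bar f=-\Delta\bar f$ $\bar\mu$-almost everywhere on $\bar U$, which is the first assertion.

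For the Kirchhoff condition I would apply Green's identity on each leaf. Granting up-to-boundary regularity of $f_j$ (see below) and using that $\supp\bar g\subset\bar U$ forces $g_j$ to vanish on $\partial U_j\setminus F_j$ (these points project to $\partial\bar U$), Green's formula gives
\begin{equation*}
  \int_{U_j}\iprod{\nabla f_j}{\nabla g_j}\dd\mu
  =-\int_{U_j}(\Delta f_j)\,g_j\dd\mu+\int_{F_j}(\normder f_j)\,g_j\dd S .
\end{equation*}
Summing over $j$, substituting $\bar D\bar f=-\Delta\bar f$ from the previous step so that the bulk terms cancel against $\iprod{\bar D\bar f}{\bar g}$, and transporting each boundary integral to $\bar F$ via the measure-preserving glueing isometries (under which every $g_j\restr{F_j}$ becomes $\bar g\restr{\bar F}$ and the induced surface measures agree), one is left with
\begin{equation*}
  0=\int_{\bar F}\Bigl(\sum_{j=1}^r\normder\bar f_j\Bigr)\bar g\dd S .
\end{equation*}
Since the traces $\bar g\restr{\bar F}$ of admissible test functions are dense in $\Lsqr{\bar F}$ (one may use Lipschitz functions on $\bar M$ with prescribed trace on $\bar F$ and support in $\bar U$, which lie in the core), the bracket vanishes, giving $\sum_{j}\normder\bar f_j=0$ on $\bar F$. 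Continuity of $\bar f$ across $\bar F$ is immediate, since the glueing condition built into $\dom\bar{\qf E}$ forces the (now well-defined) boundary traces $f_j\restr{F_j}$ to agree.

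The main obstacle is exactly the up-to-boundary regularity invoked in Green's identity, namely that $\bar f\in\dom\bar D$ is in fact $H^2$ up to each $F_j$, so that the normal derivatives $\normder f_j$ exist as $\Lsqr{F_j}$ functions and the surface integrals are meaningful. Near $\bar F$ the set $\bar U$ is a finite collection of smooth-boundary Riemannian pieces glued transversally along the common smooth hypersurface $\bar F$, and the required statement is precisely elliptic regularity for the associated transmission (quantum-graph-type) boundary value problem. I would prove it by localising near $\bar F$ and reducing to standard elliptic regularity up to a smooth boundary via an even/odd reflection (doubling) that unfolds the glued configuration; everything else in the argument is bookkeeping with the glueing isometries.
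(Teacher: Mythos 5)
Your proposal is correct and follows essentially the same route as the paper: express $\bar{\qf E}(\bar f,\bar g)$ leafwise, apply Green's formula on each Riemannian leaf, first test against $\bar g$ supported away from $\bar F$ to get $\bar D\bar f=-\Delta\bar f$, then use density of the traces $\bar g\restr{\bar F}$ to extract the Kirchhoff condition. You are in fact more explicit than the paper about the up-to-boundary $\Sobspace[2]$ regularity needed to make the normal derivatives and surface integrals meaningful, a point the paper only flags in a remark (interpreting the derivatives weakly and asserting $\Sobspace[2]$ regularity on $r$-fold smooth fibration parts).
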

Note that the derivatives are only weak derivatives. This theorem does
not make any statements on the regularity of $\dom \bar D$.  If we are
only on parts which are $r$-fold smooth fibrations, then the solutions
are in $\Sobspace[2]$.
\begin{proof}
  Let $\bar g \in \dom \bar{\qf E}$ with $\supp \bar g \subset \bar U$
  and $g$ its lift.  Now after our notes made above, we have
  \begin{align*}
    \int_{\bar U} \bar D \bar f \cdot \bar g \dd \bar \mu
    =& \bar{\qf E}(\bar f, \bar g)\\
    =& \sum_{j=1}^r \int_{U_j} \nabla f \cdot \nabla g \dd \mu\\
    =&\sum_{j=1}^r 
    \Bigl(
      \int_{U_j} (-\Delta f) \cdot g \dd \mu
     +\int_{F_j} \normder f_j \cdot g \dd \sigma
     \Bigr)\\
    =& \int_{\bar U \setminus \bar F} (-\Delta \bar f) \cdot g \dd \mu
     +  \int_{\bar F} \Bigl(\frac 1 r\sum_{j=1}^r\normder \bar f_j\Bigr) 
           \cdot \bar g \dd \bar \sigma   
  \end{align*}

  using Green's formula on the Riemannian manifold (third equality).
  Here, $\sigma$ denotes the canonical measure on the boundary of the
  Riemannian manifold and $\bar \sigma$ the push forward measure on
  $\bar F$ (counting each measure from the leaves boundary, hence
  $\bar \sigma (\bar F)=r\sigma(F_j)$.  We first see that $\bar D \bar
  f= -\Delta \bar f$ (choose $\bar g$ with support away from $\bar
  F$).  Then we let $\bar g \in \dom \bar{\qf E}$ with $\supp \bar g
  \subset \bar U$; as $\bar g \restr {\bar F}$ runs through a dense
  subspace of $\Lsqr{\bar F,\bar \sigma}$, the result follows.
\end{proof}

If $r=1$, this reduces to the manifold case with Neumann boundary
conditions.
\begin{corollary}
  With the same notation as above and the additional assumption that
  $r=1$, functions $\bar f \in \dom \bar D$ with $\supp \bar f \subset
  \bar U$ satisfy Neumann boundary conditions $\normder \bar f=0$ on
  $\bar F$.
\end{corollary}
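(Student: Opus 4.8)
The plan is to read this statement off directly from the Kirchhoff condition of \Prp{kirchhoff}, which already contains all of the analytic content. First I would record what the hypothesis $r=1$ means geometrically: the fibration $\pi^{-1}(\bar U)=U_1$ consists of a single leaf, so over $\bar U$ the projection $\pi$ identifies no points, $\bar U$ is isometric to an open subset of a Riemannian manifold with smooth boundary, and $\bar F$ is isometric to the boundary piece $F_1 \subset \bd U_1$. In particular the lift $f$ agrees with $\bar f$ throughout this region, and there is only a single outward normal derivative, namely $\normder \bar f_1$, which we simply denote $\normder \bar f$.

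With this bookkeeping in place, I would invoke \Prp{kirchhoff}: for $\bar f \in \dom \bar D$ with $\supp \bar f \subset \bar U$ the Kirchhoff relation
\begin{equation*}
  \sum_{j=1}^r \normder \bar f_j = 0 \quad \text{on } \bar F
\end{equation*}
holds. Specialising to $r=1$ collapses the sum to its single summand, giving $\normder \bar f = 0$ on $\bar F$, which is exactly the Neumann boundary condition. I expect no genuine obstacle here: all the substance---Green's formula on each leaf, the passage to the push-forward boundary measure $\bar \sigma$, and the density of boundary traces in $\Lsqr{\bar F,\bar \sigma}$---is already packaged into \Prp{kirchhoff}, so that the corollary reduces to the trivial observation that a one-term sum vanishes precisely when its single term does. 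This also makes precise the remark preceding the statement, that the case $r=1$ is simply the Riemannian manifold with Neumann boundary conditions.
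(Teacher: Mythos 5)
Your proposal is correct and matches the paper's intent exactly: the paper gives no separate proof, treating the corollary as the immediate specialisation of the Kirchhoff condition in Proposition~\ref{prp:kirchhoff} to a one-leaf fibration, which is precisely what you do. The single-term sum $\normder \bar f_1=0$ is the Neumann condition, and no further argument is needed.
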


\begin{examples}
  The simplest example of the situation in \Prp{kirchhoff} is a metric
  graph. The MCP space consists of a collection of disjoint intervals,
  one for each edge of the metric graph. The glueing then identifies
  the end points of the intervals that correspond to adjacent edges in
  the metric graph.
 
  \Exenum{mcp}{mcp.d} is a higher dimensional version.
\end{examples}

\subsection{Heat kernel estimates}

\begin{theorem}[\cite{CKS87}]
\label{thm:existence_measure}
  Let $(M,d,\mu)$ be a compact metric measure space and $\qf E$ a
  regular Dirichlet form on it.  Then there exists a measure
  $\Gamma(f)$ such that
  \begin{align*}
    \qf E(f) = \int_M \dd\Gamma(f)(x)
  \end{align*}
  for any $f \in \Cont M \cap \dom \qf E$.
\end{theorem}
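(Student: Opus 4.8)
The plan is to realise $\Gamma(f)$ as the \emph{energy measure} (the ``carré du champ'') attached to $\qf E$ and $f$. Fix $f \in \Cont M \cap \dom \qf E$; since $M$ is compact, $f$ is automatically bounded. Because $\qf E$ is a Dirichlet form, $\dom \qf E \cap \Cont M$ is an algebra, so for every $g \in \dom \qf E \cap \Cont M$ the products $fg$ and $f^2$ again lie in $\dom \qf E$, and one may define a linear functional
\begin{align*}
  \Lambda_f(g) := \qf E(f,fg) - \tfrac12 \qf E(f^2,g)
\end{align*}
on $\dom \qf E \cap \Cont M$. The content of the theorem is that this $\Lambda_f$ is represented by a positive measure; the particular combination above is chosen precisely so that its total mass will come out to $\qf E(f)$.

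The crucial step, and I expect the only genuinely hard one, is \textbf{positivity}: $\Lambda_f(g) \ge 0$ whenever $g \ge 0$. This is where the Markov property of $\qf E$ (equivalently of $D$, via \Thm{semigroup_form_operator}) must be used. I would prove it through the standard approximation by the bounded forms
\begin{align*}
  \qf E^{(\beta)}(u,v) := \beta \iprod{u - \beta G_\beta u}{v},
\end{align*}
where $G_\beta$ denotes the resolvent associated to $D$, so that $\beta G_\beta$ is sub-Markovian and $\qf E^{(\beta)}(u) \nearrow \qf E(u)$ as $\beta \to \infty$ for every $u \in \dom \qf E$. Each $\qf E^{(\beta)}$ is a jump-type form with an explicit symmetric kernel; substituting it into the definition, the corresponding functional $\Lambda_f^{(\beta)}$ (built from $\qf E^{(\beta)}$ in place of $\qf E$) symmetrises into an integral whose integrand is a non-negative multiple of $g$, with the sub-Markov property making the diagonal contribution non-negative. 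Hence $\Lambda_f^{(\beta)}(g) \ge 0$ for $g \ge 0$, and letting $\beta \to \infty$ yields $\Lambda_f(g) \ge 0$.

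Once positivity holds, the remainder is routine. By positivity and compactness of $M$, $\Lambda_f$ is bounded with respect to the supremum norm (dominate $g$ by $\norm[\infty]g$ times an approximation of $\mathbf 1$), and regularity of $\qf E$ makes $\dom \qf E \cap \Cont M$ dense in $\Cont M$ for $\norm[\infty]\cdot$; thus $\Lambda_f$ extends to a positive bounded linear functional on $\Cont M$, and the Riesz representation theorem supplies a unique finite Radon measure $\Gamma(f)$ with $\Lambda_f(g) = \int_M g \dd\Gamma(f)$. For the total mass I would choose an increasing sequence $g_n \in \dom \qf E \cap \Cont M$ with $0 \le g_n \le 1$ and $g_n \to 1$; monotone convergence gives $\int_M \dd\Gamma(f) = \lim_n \Lambda_f(g_n)$, and a short computation using the closedness of $\qf E$ (together with the absence of a killing term, which holds in the conservative setting of this paper) yields $\lim_n \Lambda_f(g_n) = \qf E(f,f) = \qf E(f)$.
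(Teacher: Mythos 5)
The paper offers no proof of this theorem at all — it is quoted directly from \cite{CKS87} — so the only comparison available is with the cited source; your argument is exactly the standard energy-measure (carr\'e du champ) construction found there and in Fukushima--Oshima--Takeda, \S 3.2 (positivity of $g \mapsto 2\qf E(fg,f)-\qf E(f^2,g)$ via the sub-Markovian approximating forms $\qf E^{(\beta)}$, then Riesz representation), and it is correct, including the algebra computation that makes the symmetrised integrand $(f(x)-f(y))^2(g(x)+g(y))$. The one point worth making explicit is the one you already flag yourself: the exact identity $\int_M \dd\Gamma(f) = \qf E(f)$, rather than the inequality $\le$, uses that $1 \in \dom \qf E$ with $\qf E(1,\cdot)=0$ (conservativeness and absence of a killing term), which holds in this paper's setting of a compact $M$ with strictly Markovian semigroup but is not literally among the hypotheses as the theorem is stated.
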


\begin{lemma}[Subpartitioning lemma] 
  \label{lem:subpartitioning_lemma_proven}
 
  Let $(M,d,\mu)$ be a $(K,N)$-MCP space and let $U \subset M$ be open and convex. Then
  \begin{align*}
    \int_U \frac{N}{r^N} \int_{B_r^*(x) \cap U} 
       \left(\frac{f(y)-f(x)}{d(y,x)}\right)^2\dd\mu(y)\dd\mu(x) 
    \le \int_U \dd\Gamma(f)(x)
  \end{align*}
\end{lemma}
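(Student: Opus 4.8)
The plan is to dominate each difference quotient by the mean square of the derivative of $f$ along a geodesic — a ``subpartitioning'' of the segment from $x$ to $y$ — and then to re-sum these contributions using the measure contraction property so that each point of $U$ is charged exactly its share of $\Gamma(f)$. I would work with $f\in\LipCont U$; since this is a core for the form by \Thm{dirichlet_form_existence} and both sides of the asserted inequality are continuous in the form norm, the general case then follows by density.

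First I would exploit convexity. By \Thmenum{mcp2}{mcp2.e}, the triple $(U,d\restr{U\times U},\mu\restr U)$ is again a $(K,N)$-MCP space; its intrinsic metric is $d$, so $B_r^*(x)\cap U$ is an intrinsic ball and the left-hand side — call it $\qf E_r^U(f)$ — is the scale-$r$ Sturm functional of $U$ from \Def{dir.form1}. Convexity further guarantees that for $x,y\in U$ a minimizing geodesic and all its intermediate points $\gamma_t(x,y)$ stay inside $U$. Applying \Thm{dirichlet_form_existence} and \Thm{existence_measure} to $U$ and using strong locality of the energy measure, I would record that $\int_U\dd\Gamma(f)=\lim_{s\to0}\qf E_s^U(f)$: the right-hand side is precisely the $s\to0$ limit of the left-hand side, which both pins down the normalisation and identifies the target value.

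The heart is a geodesic Cauchy--Schwarz bound. For $x,y\in U$,
\begin{equation*}
  \left(\frac{f(y)-f(x)}{d(y,x)}\right)^2
  \le\int_0^1\bigl|\dot f(\gamma_t(x,y))\bigr|^2\,\dd t,
\end{equation*}
where $\dot f$ is the derivative of $f$ along the unit-speed geodesic. I would substitute this into $\qf E_r^U(f)$, use Fubini to pull the $t$-integral outside, and for fixed $t$ and $x$ push the inner $y$-integral forward under $y\mapsto\gamma_t(x,y)$. Part~(ii) of the $(K,N)$-MCP definition bounds this push-forward: the measure $\zeta_{K,N}^{(t)}(d(x,y))\,\dd\mu(y)$ maps into $\mu$. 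As $d(x,y)<r$ and $\zeta_{K,N}^{(t)}(\theta)\to t^N$ as $\theta\to0$, the contraction weight is essentially $t^N$, and together with the factor $N/r^N$ and the arc-length scaling $s=t\,d(x,y)$ a second application of Fubini — now swapping the intermediate point with the base point $x$ — collapses the integral over pairs into a single integral of the energy density over $U$, namely $\int_U\dd\Gamma(f)$. Restricting the inner ball to $U$ only discards mass, so every manipulation is an inequality in the correct direction.

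The main obstacle is the exact constant. One must verify that $N/r^N$, the contraction weight $\zeta_{K,N}^{(t)}$, and the angular average over geodesic directions of $|\dot f|^2$ combine to reproduce $\dd\Gamma(f)$ with constant exactly $1$, and not merely a constant tending to $1$ as $r\to0$. For $K=0$ the measure contraction is an equality and this is a direct spherical computation; for $K\ne0$ one has to control the gap between $\zeta_{K,N}^{(t)}(d(x,y))$ and $t^N$ on $\{d(x,y)<r\}$ and absorb it using the volume asymptotics $\mu(B_r(x))/r^N\to\tau(x)$. Equally delicate, already on the core $\LipCont U$, is matching the angular mean of the along-geodesic derivative with the Carlen--Kusuoka--Stroock energy measure of \Thm{existence_measure}. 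Here convexity does the essential work: by keeping every intermediate point inside $U$ it both legitimises the change of variables and supplies the $O(r)$ boundary deficit that offsets the interior $O(r^2)$ error, so that the inequality holds for each fixed $r$ and not only in the limit.
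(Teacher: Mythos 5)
Your proposal takes a genuinely different route from the paper, and it has a real gap at exactly the point you flag as ``the main obstacle''. You are trying to prove the fixed-$r$ inequality from first principles: bound each difference quotient by the mean square of the derivative of $f$ along a connecting geodesic, push forward with the MCP Markov kernel, and identify the resulting angular average of $\bigl|\dot f\bigr|^2$ with the density of $\Gamma(f)$ at constant exactly $1$. That last identification is not a technical footnote: in a general $(K,N)$-MCP space there is no a priori pointwise gradient, $\bigl|\dot f(\gamma_t(x,y))\bigr|$ is a function of the geodesic rather than of the point, and matching its directional average against the energy measure of \Thm{existence_measure} is essentially the content of Sturm's construction of $\qf E$ itself. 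Likewise the claim that an ``$O(r)$ boundary deficit offsets the interior $O(r^2)$ error'' so that the inequality holds for each \emph{fixed} $r$ is asserted, not proved; convexity only guarantees that $U$ is again an MCP space and that geodesics stay inside, it produces no such cancellation. Without that step you only obtain the statement in the limit $r\to 0$, which is an equality by definition of $\qf E$ and is not the lemma.

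The paper avoids all of this by quoting the subpartitioning inequality from \cite{Sturm06b}: for any partition $0=t_0<\dots<t_n=1$,
\begin{equation*}
  \qf E_r(f) \le \sum_{i=1}^{n}(t_i-t_{i-1})\, \qf E_{(t_i-t_{i-1})r}(f),
\end{equation*}
applied on $U$, which is itself a $(K,N)$-MCP space by \Thmenum{mcp2}{mcp2.e} thanks to convexity. With the partition $\{0,\tfrac12,1\}$ this gives $\qf E_r(f)\le \qf E_{r/2}(f)$, so $n\mapsto \qf E_{2^{-n}r}(f)$ is nondecreasing and $\qf E_r(f)\le\lim_{n\to\infty}\qf E_{2^{-n}r}(f)=\qf E(f)=\int_U \dd\Gamma(f)$. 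That monotonicity is what converts the limiting identity into an inequality valid at every fixed $r$ with constant exactly $1$ --- precisely the step your argument is missing. Your geodesic decomposition is in effect the proof strategy of the subpartitioning inequality itself; you should either cite that result and then run the dyadic monotonicity argument, or carry out the error analysis in full rather than deferring it.
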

\begin{proof}
  For any MCP space and $0 = t_0 < t_1 < \dots t_{n-1} < t_n=1$ a
  partition of the unit interval we have the estimate
  \begin{equation*}
    \qf E_r(f) \le \sum_{i=1}^{n}(t_i-t_{i-1}) \qf E_{(t_i-t_{i-1})r}(f)
  \end{equation*}
  by~\cite{Sturm06b}. We will apply this directly to $U$, which is
  also a $(K,N)$-MCP space by \Thmenum{mcp2}{mcp2.e}.  Let $r_n :=
  2^{-n}r$ then $\qf E(f)= \lim_{n \to \infty} \qf E_{r_n}(f)$. Using
  the partition $0,\frac12,1$ this is an increasing sequence. Hence
  $\qf E_r(f) \le \qf E(f)$.
\end{proof}

\begin{definition}
  \label{def:e-metric}
  Let $(M,d,\mu)$ be a metric measure space and $\qf E$ a regular
  Dirichlet form on it. Then we define the \emph{energy metric}
  $\rho$ on $M$ as follows
  \begin{align*}
    \rho(x,y) := 
    \sup \Bigset{f(x) - f(y)} 
             {f \in \Cont M \cap \dom \qf E, 
               \frac{\dd \Gamma(f)}{\dd\mu} \le 1 \; \text{on}\; M}
  \end{align*}
  where $\dd\Gamma(f)/\dd \mu$ represents the Radon-Nikod\'ym
  derivative. Note that this includes the implicit assumption that
  $\dd\Gamma(f)$ is absolutely continuous with respect to $\dd\mu$.
\end{definition}

This metric is often called the intrinsic metric, especially when $M$
is only a (sufficiently nice) topological space. To avoid confusion
with the distance induced via the length of paths we use the term  energy metric.  A priori, the  energy metric need not be a proper metric, it can be degenerate.

\begin{remark}
  \label{rem:absolute_continuity}
  Let $(M,d,\mu)$ be an MCP space and let $\qf E$ be the associated
  Dirichlet form. Then $\dd\Gamma(f)$ is absolutely continuous with
  respect to $\dd\mu$ for all $f \in \Cont M \cap \dom \qf E$
  by~\cite[Cor.~6.6~(iii)]{Sturm06b}.
\end{remark}

\begin{lemma}
  \label{lem:metric_equivalence}
  Let $(M,d,\mu)$ be a $(K,N)$-MCP space. Let $\qf E$ be the
  associated Dirichlet form from \Thm{dirichlet_form_existence}.  Then
  the energy metric is equivalent to the metric $d$. In particular
  they induce the same topology on $M$.
\end{lemma}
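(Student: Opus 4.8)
The plan is to establish a two-sided comparison $c_1\, d(x,y) \le \rho(x,y) \le c_2\, d(x,y)$ for constants $c_1,c_2>0$ depending only on $N$, the Alfohrs constant, and the structural data; bi-Lipschitz equivalence of metrics then immediately gives that $\rho$ and $d$ induce the same topology. The guiding principle is that the density $\dd\Gamma(f)/\dd\mu$ behaves like the square of the local Lipschitz constant of $f$, up to uniform factors. Here \Ass{uniform_dimension} is what makes the factors uniform: $N$-Alfohrs regularity forces $\tfrac1c \le \mu(B_r(x))/r^N \le c$, so the limit density $\tau$ is bounded \emph{above and below}, and these comparability constants feed into both inequalities.

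For the lower bound $\rho \ge c_1 d$ I would test the supremum in \Def{e-metric} against a rescaled distance function. Fix $y\in M$ and set $f_y := d(y,\cdot) \in \LipCont M \subset \dom \qf E$, which is continuous. Since $f_y$ is $1$-Lipschitz, the difference quotients in \Def{dir.form1} are bounded by $1$, so the inner integral defining $\qf E_r(f_y)$, namely $\frac{N}{r^N}\int_{B_r^*(x)}((f_y(z)-f_y(x))/d(z,x))^2\,\dd\mu(z)$, is at most $\frac{N}{r^N}\mu(B_r(x)) \le Nc$ for $r\le 1$ by Alfohrs regularity. Using strong locality of the energy measure $\Gamma$ together with \Thmenum{mcp2}{mcp2.e} (so that any convex open $V$ is again a $(K,N)$-MCP space on which the form restricts to the intrinsic one), this bound localizes to $\Gamma(f_y)(V) = \qf E_V(f_y) \le Nc\,\mu(V)$ for every convex $V$, whence $\dd\Gamma(f_y)/\dd\mu \le Nc =: C$ $\mu$-almost everywhere. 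Then $g_y := f_y/\sqrt{C}$ is admissible in \Def{e-metric} (absolute continuity being guaranteed by \Rem{absolute_continuity}), and $g_y(x)-g_y(y) = d(x,y)/\sqrt{C}$ yields $\rho(x,y) \ge c_1\, d(x,y)$ with $c_1 = 1/\sqrt{Nc}$.

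For the upper bound $\rho \le c_2 d$ I must show that \emph{every} admissible $f$, that is, every $f$ with $\dd\Gamma(f)/\dd\mu \le 1$, is $c_2$-Lipschitz; then $f(x)-f(y) \le c_2 d(x,y)$ for all such $f$ and the supremum is controlled. Applying the subpartitioning \Lem{subpartitioning_lemma_proven} on a small convex ball $B_s(w)$ bounds the \emph{averaged} difference quotient, $\int_{B_s(w)} \frac{N}{r^N}\int_{B_r^*(x)\cap B_s(w)}((f(z)-f(x))/d(z,x))^2 \,\dd\mu(z)\,\dd\mu(x) \le \Gamma(f)(B_s(w)) \le \mu(B_s(w))$, so $f$ has a uniformly bounded generalized gradient. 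To upgrade this averaged bound to a pointwise Lipschitz estimate I would chain along a geodesic joining $x$ and $y$ (which exists because an MCP space is geodesic), using the volume doubling property \Thmenum{mcp2}{mcp2.c} together with the local Poincaré inequality available for MCP spaces from~\cite{Sturm06b}. Doubling plus Poincaré is precisely the setting of Cheeger's differentiation theory, in which an $L^\infty$ bound on the minimal upper gradient produces a Lipschitz representative with constant $c_2$ depending only on the structural constants.

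I expect this last step to be the main obstacle: passing from the integrated bound supplied by \Lem{subpartitioning_lemma_proven} to a genuine pointwise Lipschitz bound is a regularity statement, and it is here that one must exploit the doubling-plus-Poincaré geometry of MCP spaces rather than soft measure-theoretic arguments. By contrast the lower bound is essentially elementary once the density of the distance function is controlled. Combining the two inequalities gives $c_1\, d \le \rho \le c_2\, d$, so $\rho$ and $d$ are equivalent and, in particular, induce the same topology.
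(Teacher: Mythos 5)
Your lower bound $\rho \ge c_1 d$ is essentially the paper's argument: test the supremum in \Def{e-metric} with a rescaled distance function, bound the difference quotients by $1$, and use Alfohrs regularity~\eqref{eq:alfohrs-reg} to control $\dd\Gamma(f)/\dd\mu$. (The paper gets the pointwise density bound by dominated convergence on $\lim_{r\to 0}\frac{N}{r^N}\int_{B_r^*(x)}(\cdot)$ rather than by your localization to convex sets, but the content is the same.) That half is fine.

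The upper bound is where you diverge, and it is where your proposal has a genuine gap — one you yourself flag as ``the main obstacle'' and then outsource to Cheeger differentiation theory without closing it. Two things are missing. First, the input to the PI-space machinery is an $L^\infty$ bound on a \emph{minimal upper gradient}, whereas what \Lem{subpartitioning_lemma_proven} hands you is an \emph{averaged} difference-quotient bound for Sturm's nonlocal approximating forms; the identification of $\dd\Gamma(f)/\dd\mu$ with the square of an upper gradient for this particular form is not an off-the-shelf citation and is precisely the kind of regularity statement you would need to prove. Second, even granting it, you would still need the $p$-Poincaré inequality in the form Cheeger's theory requires. None of this is necessary. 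The paper closes the upper bound with a short, self-contained localization argument: let $L_f$ be the sharp Lipschitz constant of an admissible $f$, pick $x_0,y_0$ with $f(x_0)-f(y_0)\ge \tfrac{L_f}{2}d(x_0,y_0)$ and, by repeatedly passing to midpoints (MCP spaces are geodesic), arrange that $d_0:=d(x_0,y_0)$ is as small as you like. Then every $x\in B_{d_0/6}(x_0)$ and $y\in B_{d_0/6}(y_0)$ satisfies $|f(x)-f(y)|\ge \tfrac{L_f}{12}d(x,y)$, and applying \Lem{subpartitioning_lemma_proven} on $U=B_{2d_0}(x_0)$ with $r=2d_0$ gives $\mu(B_{2d_0}(x_0)) \ge \tfrac{L_f^2}{144}\tfrac{N}{(2d_0)^N}\mu(B_{d_0/6}(x_0))\mu(B_{d_0/6}(y_0))$. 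The two-sided bounds of~\eqref{eq:alfohrs-reg} make every power of $d_0$ cancel, yielding a bound on $L_f$ depending only on $c$ and $N$. This is exactly the step your plan leaves open, and it uses only the ingredients you already listed (subpartitioning plus Alfohrs regularity), with no differentiation theory. I would also note that both your route and the paper's quietly restrict to Lipschitz competitors $f$, whereas \Def{e-metric} allows all $f\in\Cont M\cap\dom\qf E$ with bounded energy density; your Cheeger-type route would, if completed, actually handle that larger class, which is the one thing it buys over the paper's argument.
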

\begin{proof}
  This is proven in~\cite{Sturm98} for a different version of the
  MCP. This proof is an adaptation of his proof.

  We have $\qf E (f) = \int_M d\Gamma(f) = \int_M
  \frac{d\Gamma(f)}{\dd\mu}(x) \dd\mu(x)$ by \Thm{existence_measure}
  and \Rem{absolute_continuity}.
  
  For $z \in M$ and $C>0$ fixed, let $f(x):=Cd(x,z)$. Then we have
  \begin{align*}
    \qf E_r(f) 
    &= \int_M \frac{N}{r^N} \int_{B_r^*(x)}C^2
       \left(\frac{d(x,z)-d(y,z)}{d(x,y)}\right)^2
       \dd\mu(y)\dd\mu(x) \\
    &\le \int_M \frac{N}{r^N} \int_{B_r^*(x)}C^2 \dd\mu(y)\dd\mu(x) .
  \end{align*}
  We assumed in~\eqref{eq:alfohrs-reg} that $\mu(B_r(x))/r^N$ is
  globally bounded by some constant $c$. Hence we can apply the
  dominated convergence theorem and get
  \begin{align*}
   \frac{d\Gamma(f)}{d\mu}(x) 
   = \lim_{r \ra 0} \frac{N}{r^N} 
       \int_{B_r^*(x)}C^2 \left(\frac{d(x,z)-d(y,z)}{d(x,y)}\right)^2
       \dd\mu(y)
  \end{align*}
  This shows $\dd \Gamma(f)/\dd \mu\le 1$ for $C \le (cN)^{-1/2}$.
 
  Plugging $f$ into the definition of the energy metric, we obtain
  $\rho(x,y) \ge C(d(x,z)-d(y,z))$ valid for any $C \le (cN)^{-1/2}$
  and any $z \in M$. In particular, for $z:=y$ we get the lower bound
  $\rho(x,y) \ge Cd(x,y)$.
 
  Let $f \in \LipCont M$ with $\dd \Gamma(f)/\dd \mu \le 1$ and let
  $L_f$ denote the sharp Lipschitz constant of $f$. Then $f(x)-f(y)
  \le L_fd(x,y)$. Hence if we show that there exists a global
  Lipschitz constant $L$ for all functions $f \in \LipCont M$ that
  satisfy $\dd\Gamma(f)/\dd\mu \le 1$ we get the estimate $\rho(x,y)
  \le Ld(x,y)$.

  Let $x_0,y_0 \in M$ be such that $f(x_0)-f(y_0)\ge
  \frac{L_f}{2}d(x_0,y_0)$. We can assume without loss of generality
  that $d_0:=d(x_0,y_0)$ is arbitrarily small by repeatedly taking
  midpoints. Hence $d_0$ can be bounded from above independent of
  $f$. Let $x \in B_{d_0/6}(x_0)$ and $y \in B_{d_0/6}(y_0)$. Then
  \begin{align*}
    |f(x)-f(y)| 
    \ge |f(x_0)-f(y_0)| - |f(x)-f(x_0)| - |f(y)-f(y_0)| \\
    \ge \left( \frac{L_f}{2} - \frac{L_f}{6} - \frac{L_f}{6}\right)d_0 
    = \frac{L_fd_0}{6} 
    \ge \frac{L_f}{12}d(x,y)
  \end{align*}

  Let $U:= B_{2d_0}(x_0)$ and $r=2d_0$ in
  \Lem{subpartitioning_lemma_proven}, then
  \begin{align*}
    \mu(B_{2d_0}(x_0)) 
    \ge& \int_{B_{2d_0}(x_0)} \dd\Gamma(f) \\
    \ge& \int_{B_{2d_0}(x_0)} \frac{N}{(2d_0)^N} 
      \int_{B_{2d_0}^*(x)\cap B_{2d_0}(x_0)} 
      \left(\frac{f(y)-f(x)}{d(y,x)}\right)^2 \dd\mu(y)\dd\mu(x) \\
    \ge& \int_{B_{d_0/6}(x_0)} \frac{N}{(2d_0)^N} \int_{B_{d_0/6}^*(y_0)} 
      \left(\frac{f(y)-f(x)}{d(y,x)}\right)^2 \dd\mu(y)\dd\mu(x) \\
    \ge & \frac{L_f^2}{144}\frac{N}{(2d_0)^N}\mu(B_{d_0/6}(x_0)) \mu(B_{d_0/6}(y_0))
  \end{align*}
  By~\eqref{eq:alfohrs-reg} we have uniform global bounds for the
  volumes of balls. All the $d_0$ cancel out, so this proves an upper
  bound for $L_f$ that is independent of $f$ completing the proof.
\end{proof}

\begin{lemma}
  Let $(\bar M,\bar d,\bar \mu)$ be a manifold-like space with induced
  Dirichlet form $\bar{\qf E}$. Then the  energy metric is
  equivalent to the $\bar d$ metric.
\end{lemma}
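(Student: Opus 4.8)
The plan is to reduce the statement to \Lem{metric_equivalence} by pulling everything back along the projection $\map\pi M{\bar M}$ from the MCP lift $(M,d,\mu)$. The two structural facts I would use are that $\bar\mu$ is the push forward of $\mu$ and that $\bar{\qf E}(\bar f)=\qf E(\pi^*\bar f)$ on the core $\LipCont{\bar M}$, both recorded in \Prp{dirichlet_form_on_manifold-like}. A single glueing (\Def{identif}) identifies two closed isometric sets $F_1,F_2$; in a manifold-like space these are lower-dimensional and hence $\mu$-null, and $\bar M$ is obtained by finitely many such glueings, so the whole glued locus is $\mu$-null and $\pi$ is a local isometry off it. Using the absolute continuity of $\dd\Gamma(f)$ with respect to $\dd\mu$ on MCP spaces (\Rem{absolute_continuity}) together with \Thm{existence_measure}, I would show that the energy measure on $\bar M$ is the push forward of the one on $M$, i.e.\ $\bar\Gamma(\bar f)=\pi_{*}\Gamma(\pi^*\bar f)$, and hence obtain the pointwise density identity
\begin{equation*}
  \frac{\dd\bar\Gamma(\bar f)}{\dd\bar\mu}(\bar x)
  = \frac{\dd\Gamma(\pi^*\bar f)}{\dd\mu}(x)
  \qquad\text{for $\bar\mu$-a.e.\ $\bar x=\pi(x)$.}
\end{equation*}
In particular $\dd\bar\Gamma(\bar f)/\dd\bar\mu\le 1$ on $\bar M$ precisely when $\dd\Gamma(\pi^*\bar f)/\dd\mu\le 1$ on $M$. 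This density correspondence — checking that $\Gamma$ charges no mass on the glued locus and that the densities transform correctly through the finite-to-one map $\pi$ — is the step I expect to be the main obstacle; the rest is a transcription of the MCP argument.

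Granting this, the upper bound $\rho\le L\bar d$ is immediate. For admissible $\bar f$ (continuous, in $\dom\bar{\qf E}$, with $\dd\bar\Gamma(\bar f)/\dd\bar\mu\le 1$) the lift $f:=\pi^*\bar f$ has $\dd\Gamma(f)/\dd\mu\le 1$, so \Lem{metric_equivalence} gives the global bound $|f(x)-f(y)|\le\rho(x,y)\le L\,d(x,y)$ for all $x,y\in M$. Since $f=\bar f\circ\pi$, for any representatives $x,y$ of $\bar x,\bar y$ we have $\bar f(\bar x)-\bar f(\bar y)=f(x)-f(y)\le L\,d(x,y)$; taking the infimum over representatives and then the supremum over $\bar f$ yields $\rho(\bar x,\bar y)\le L\,\bar d(\bar x,\bar y)$.

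For the lower bound $\rho\ge c\,\bar d$, I would test the definition of the energy metric (\Def{e-metric}) with $\bar f(\bar w):=C\,\bar d(\bar w,\bar y)$, which lies in $\LipCont{\bar M}\subset\dom\bar{\qf E}$. Its lift $g(w)=\bar d(\pi(w),\bar y)$ is $1$-Lipschitz on $M$, because $|\bar d(\pi(w_1),\bar y)-\bar d(\pi(w_2),\bar y)|\le\bar d(\pi(w_1),\pi(w_2))\le d(w_1,w_2)$. Running the computation from the proof of \Lem{metric_equivalence} verbatim for the $1$-Lipschitz function $g$, and using the upper Ahlfors bound $\mu(B_r(x))\le c\,r^N$ from~\eqref{eq:alfohrs-reg}, gives $\dd\Gamma(g)/\dd\mu\le Nc$, hence $\dd\Gamma(Cg)/\dd\mu\le 1$ for $C:=(Nc)^{-1/2}$. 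By the density correspondence above $\bar f$ is admissible, so
\begin{equation*}
  \rho(\bar x,\bar y)\ \ge\ \bar f(\bar x)-\bar f(\bar y)\ =\ C\,\bar d(\bar x,\bar y).
\end{equation*}
Combining the two bounds gives $c\,\bar d\le\rho\le L\,\bar d$, so the energy metric and $\bar d$ are equivalent and define the same topology, as claimed.
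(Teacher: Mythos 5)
Your proof is essentially correct, but it follows a genuinely different and considerably more explicit route than the paper. The paper's own proof is a three-line reduction: it writes $\bar d(\bar x,\bar y)=\min\set{d(x,y)}{\pi(x)=\bar x,\,\pi(y)=\bar y}$, asserts that the energy metric on $\bar M$ admits the same description as a minimum over representatives, and concludes that equivalence is ``inherited through glueing.'' That assertion is itself nontrivial (the supremum defining the energy metric on $\bar M$ runs only over those test functions that descend to the quotient, so one does not immediately get the quotient formula in both directions), and the paper does not justify it. You instead re-run the two halves of \Lem{metric_equivalence} directly on $\bar M$: the lower bound by testing with $C\,\bar d(\cdot,\bar y)$, whose lift is $1$-Lipschitz so the Ahlfors bound gives admissibility; the upper bound by lifting an admissible $\bar f$ to an admissible $f$ on $M$ and invoking the uniform Lipschitz constant $L$ from the MCP argument. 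Both halves are correct, and your approach makes visible exactly where the work is, which the paper's proof hides.

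The one point to be careful about is the density correspondence $\dd\bar\Gamma(\bar f)/\dd\bar\mu=\dd\Gamma(\pi^*\bar f)/\dd\mu$, which you rightly flag as the crux. The identity $\bar\Gamma(\bar f)=\pi_*\Gamma(\pi^*\bar f)$ does follow from $\bar{\qf E}=\qf E\circ\pi^*$ and the characterisation of the energy measure, but passing from the push-forward identity to the pointwise density identity uses that $\pi$ is injective off the glued locus and that the glued locus is $\bar\mu$-null. The latter is \emph{not} part of \Def{identif} (the closed sets $F_1,F_2$ are not required to be null), though it holds in every example the paper considers; if the glued locus had positive measure, the push-forward density at a $k$-fold point would be a weighted average of the $k$ preimage densities, and the implication ``admissible on $\bar M$ $\Rightarrow$ admissible on $M$'' needed for your upper bound could fail. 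So either state the nullity of $F_1\cup F_2$ as an additional hypothesis or observe that it holds for the manifold-like spaces under consideration. With that caveat made explicit, your argument is complete and, in my view, a more honest proof than the one in the paper.
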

\begin{proof}
  It is sufficient to prove this for one glueing with glueing map
  $\phi$ and projection $\pi$.  We can write $\bar d(\bar x,\bar y)=
  \min \set{d(x,y)}{\pi(x)=\bar x,\pi(y)=\bar y}$ and similarly for
  the energy metric. Hence the equivalence of metrics is
  inherited through glueing.
\end{proof}

\begin{definition}
  Let $(M,d,\mu)$ be a metric measure space that is $N$-Alfohrs
  regular and let $\qf E$ be a regular Dirichlet form on $M$. Let
  $N^*:= \max \{3,N\}$. Then we say $\qf E$ satisfies the
  \emph{Sobolev inequality} if there exists a $C>0$ such that for all
  $f \in \dom{\qf E} \cap \Contc{B_r(x)}$ we have
  \begin{align*}
    \left(\int_{B_r(x)}\abs f^{(2N^*)(N^*-1)} \dd\mu\right)^{(N^*-2)/N^*} 
    \le C \frac{r^2}{\mu(B_r(x))^{2/N^*}} 
          \left(\int_{B_r(x)} \dd\Gamma(f) 
              + \frac 1{r^2} \int_{B_r(x)}\abssqr f \dd\mu \right) 
  \end{align*}
  for all $r>0$.
\end{definition}

\begin{theorem}[\cite{Sturm95}]
  \label{thm:heat_kernel_original} 
  Let $(M,d,\mu)$ be a metric measure space and $\qf E$ a strongly
  local regular Dirichlet form on it.

  Assume $M$ satisfies the volume doubling property, the Sobolev
  inequality and the topology induced by $\rho$ is the same as the one
  induced by $d$. Then for any $T>0$ and any $\eps>0$ there exists a
  $C>0$ such that the heat kernel estimate
  \begin{align*}
    p_t(x,y) 
    \le C \mu(B_{\sqrt{t}}(x))^{-1/2}\mu(B_{\sqrt{t}}(y))^{-1/2} 
        \exp\left(-\frac{\rho^2(x,y)}{(4+\eps)t}\right)
  \end{align*} 
  is valid for all $x,y \in M$ and all $0<t<T$.
\end{theorem}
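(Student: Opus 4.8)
The plan is to follow the standard two-step route to Gaussian upper bounds for symmetric Dirichlet semigroups: first an on-diagonal bound obtained from the Sobolev inequality together with volume doubling, and then the off-diagonal Gaussian factor via Davies' exponential-perturbation method, where strong locality of $\qf E$ supplies the Leibniz rule for the energy measure and the energy metric $\rho$ supplies the correct ``gradient $\le 1$'' test functions. The two-point estimate will be reduced to on-diagonal values through the semigroup and symmetry properties: writing $p_t(x,y)=\int_M p_{t/2}(x,z)p_{t/2}(z,y)\dd\mu(z)$ and applying Cauchy--Schwarz gives $p_t(x,y)\le p_t(x,x)^{1/2}p_t(y,y)^{1/2}$, so it suffices to control $p_t(x,x)$ on the diagonal and to insert the Gaussian weight afterwards.

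For the on-diagonal bound I would first convert the scale-invariant Sobolev inequality into a Nash-type (equivalently Faber--Krahn) inequality on each ball $B_{\sqrt t}(x)$, using volume doubling to handle the $\mu(B_r(x))$ factors uniformly across scales. A Moser iteration (or Nash's original differential-inequality argument applied to $\|P_t f\|_{L^2}^2$) then yields $p_t(x,x)\le C\,\mu(B_{\sqrt t}(x))^{-1}$ for all $0<t<T$, with $C$ depending only on the Sobolev constant, the doubling constant, $N^*$ and $T$. Combined with the Cauchy--Schwarz reduction above, this already produces the prefactor $\mu(B_{\sqrt t}(x))^{-1/2}\mu(B_{\sqrt t}(y))^{-1/2}$, but as yet without any spatial decay.

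The Gaussian factor comes from the Davies--Gaffney weighted estimate. Fix $\alpha>0$ and a bounded function $\psi$ with $\dd\Gamma(\psi)/\dd\mu\le 1$, i.e.\ one of the admissible functions in the definition of $\rho$ (\Def{e-metric}). Setting $u_t:=P_t f$ and $\Phi(t):=\int_M u_t^2\,e^{2\alpha\psi}\dd\mu$, differentiation gives $\Phi'(t)=-2\qf E(u_t,u_t e^{2\alpha\psi})$, and the Leibniz rule for strongly local forms expands this as $\Phi'(t)=-2\int e^{2\alpha\psi}\dd\Gamma(u_t)-4\alpha\int u_t e^{2\alpha\psi}\dd\Gamma(u_t,\psi)$, where $\Gamma(u_t,\psi)$ is the bilinear energy measure. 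Applying the Cauchy--Schwarz inequality for the energy measure together with $\dd\Gamma(\psi)\le\dd\mu$, and then Young's inequality, absorbs the first (negative) term and leaves $\Phi'(t)\le 2\alpha^2\Phi(t)$, hence $\Phi(t)\le e^{2\alpha^2 t}\Phi(0)$. This step is precisely where strong locality and the energy metric are indispensable.

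Finally I would run Davies' trick to merge the two estimates. Taking $\psi$ to be a bounded $\rho$-Lipschitz approximation of $z\mapsto\rho(z,y)$ and conjugating the semigroup by $e^{\alpha\psi}$, the weighted $L^2$ bound converts the on-diagonal estimate into $p_t(x,y)\le C\,\mu(B_{\sqrt t}(x))^{-1/2}\mu(B_{\sqrt t}(y))^{-1/2}\exp(\alpha^2 t-\alpha\rho(x,y))$; optimizing over $\alpha$ by choosing $\alpha=\rho(x,y)/(2t)$ produces the exponent $-\rho^2(x,y)/(4t)$. The loss from $4$ to $4+\eps$ is forced by the slack needed when passing from the $L^2\to L^2$ weighted bound to the pointwise kernel bound (the two half-time steps, together with the truncation of $\psi$, cost a factor that can be made as small as one wishes at the price of enlarging $C$). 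The main obstacle, and the only genuinely delicate point, is obtaining the sharp constant $4$ in the exponent up to this arbitrarily small $\eps$: this requires the Davies--Grigor'yan integrated-maximum-principle bookkeeping and is exactly why the statement measures decay in the energy metric $\rho$ rather than in $d$.
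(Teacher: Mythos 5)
The paper does not prove this theorem at all: it is quoted verbatim from \cite{Sturm95}, so there is no internal proof to compare against. Your outline is nevertheless a faithful reconstruction of the standard argument, and indeed essentially of Sturm's actual one: a localized Faber--Krahn/Nash inequality extracted from the scale-invariant Sobolev inequality plus volume doubling gives the on-diagonal bound $p_t(x,x)\le C\mu(B_{\sqrt t}(x))^{-1}$ via Moser iteration, and the Gaussian factor in the energy metric comes from the Davies--Grigor'yan exponential perturbation, where strong locality supplies the Leibniz and Cauchy--Schwarz rules for the energy measure $\Gamma$ and the admissible $\psi$ with $\dd\Gamma(\psi)/\dd\mu\le 1$ are exactly those defining $\rho$. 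Your differential inequality $\Phi'(t)\le 2\alpha^2\Phi(t)$ and the optimization $\alpha=\rho(x,y)/(2t)$ are correct, and you correctly identify that the loss from $4$ to $4+\eps$ is incurred in upgrading the weighted $L^2\to L^2$ bound to a pointwise kernel bound; the only caveat is that this last step (ultracontractivity of the \emph{conjugated} semigroup on the two half-time intervals) is the technically heaviest part and is only gestured at, as is the Moser iteration itself.
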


\begin{corollary}
  \label{cor:heat_kernel_decay}
  Let $(M,d,\mu)$ be a $(K,N)$-MCP space. Let $\qf E$ be the strongly
  local regular Dirichlet form from
  \Thm{dirichlet_form_existence}. Then the heat kernel satisfies an
  exponential decay bound as in \Def{heat_kernel_decay}. That is
  \begin{align*}
    p_t(x,y) \le C t^{-N/2} \exp\left(-\frac{d^2(x,y)}{ct}\right)
  \end{align*} 
  holds for some $C,c>0$ independent of $x,y \in M$ and of $t$ as long as
  $0<t<T$ for some $T$.
\end{corollary}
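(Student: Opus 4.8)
The plan is to verify the three hypotheses of \Thm{heat_kernel_original} for the Dirichlet form $\qf E$ from \Thm{dirichlet_form_existence}, apply that theorem, and then convert the resulting estimate into the claimed form using $N$-Ahlfors regularity and the equivalence of the two metrics. Two preliminary reductions are harmless: since $M$ has at most finitely many connected components and $d(x,y)=\infty$ forces $p_t(x,y)=0$ whenever $x,y$ lie in different components, it suffices to argue on a single component, so we may assume $M$ is connected; and we fix $T \le 1$ so that the Ahlfors bound \eqref{eq:alfohrs-reg} is available at radius $\sqrt t$ for all $0<t<T$.

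First I would dispose of the two easy hypotheses. Volume doubling holds by \Thmenum{mcp2}{mcp2.c}. The requirement that the topology induced by the energy metric $\rho$ coincide with the one induced by $d$ is exactly \Lem{metric_equivalence}, which in fact provides two-sided bounds $c_1 d(x,y) \le \rho(x,y) \le c_2 d(x,y)$ that I will reuse in the final step.

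The main obstacle is the Sobolev inequality. Here I would use that a $(K,N)$-MCP space satisfies, in addition to volume doubling, a local (weak) Poincaré inequality, which is established in~\cite{Sturm06b}. The scale-invariant Sobolev inequality in the form required before \Thm{heat_kernel_original} is then a consequence of volume doubling together with the Poincaré inequality, by the standard equivalence of these conditions. The subpartitioning \Lem{subpartitioning_lemma_proven}, together with the existence of the energy measure from \Thm{existence_measure}, is what allows one to pass between the combinatorial forms $\qf E_r$ restricted to balls and the intrinsic energy $\int \dd\Gamma(f)$ that enters the Sobolev inequality, so that the estimate can be phrased in terms of $\Gamma$.

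With all three hypotheses verified, \Thm{heat_kernel_original} yields, for a fixed choice of $\eps>0$, a constant $C>0$ with
\begin{align*}
  p_t(x,y) \le C \mu(B_{\sqrt t}(x))^{-1/2}\,\mu(B_{\sqrt t}(y))^{-1/2}\,
  \exp\left(-\frac{\rho^2(x,y)}{(4+\eps)t}\right)
\end{align*}
for all $x,y\in M$ and all $0<t<T$. It remains to simplify the prefactor and the exponent. By the lower bound in \eqref{eq:alfohrs-reg} we have $\mu(B_{\sqrt t}(x)) \ge \tfrac1c\, t^{N/2}$, so the product of the two volume factors is at most $c\, t^{-N/2}$. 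By \Lem{metric_equivalence} we have $\rho^2(x,y) \ge c_1^2\, d^2(x,y)$, so the exponential is bounded by $\exp\bigl(-d^2(x,y)/(c't)\bigr)$ with $c' := (4+\eps)/c_1^2$. Absorbing all constants gives $p_t(x,y) \le C'\, t^{-N/2}\exp\bigl(-d^2(x,y)/(c't)\bigr)$, which is the assertion with $n=N$.
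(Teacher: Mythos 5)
Your proof is correct and follows essentially the same route as the paper: verify the hypotheses of \Thm{heat_kernel_original} (volume doubling, Sobolev inequality, equivalence of the energy metric and $d$ via \Lem{metric_equivalence}) and then convert Sturm's bound using the Ahlfors lower bound on $\mu(B_{\sqrt t}(x))$ and $\rho \ge c_1 d$. The only cosmetic difference is how you source the Sobolev inequality --- you go through volume doubling plus a Poincar\'e inequality, while the paper cites the parabolic Harnack inequality from~\cite{Sturm06b}; these are equivalent characterizations and both ultimately rest on the same reference, and your explicit treatment of disconnected components and of volume doubling is if anything slightly more careful than the paper's.
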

\begin{proof}
  $(M,d,\mu)$ satisfies a parabolic Harnack inequality
  by~\cite{Sturm06b}. This also implies that $\qf E$ satisfies the
  Sobolev inequality~\cite{Sturm06b}. \Lem{metric_equivalence}
  gives the equivalence of the energy metric and $d$. Hence the
  assumptions of \Thm{heat_kernel_original} are satisfied.
  
  As the volume of radius $\sqrt{t}$-balls is bounded and the metrics
  $d$ and $\rho$ are equivalent, we can reformulate the bound
  from~\cite{Sturm95} as written.
\end{proof}

\begin{corollary}
  Let $(\bar M,\bar d,\bar \mu)$ be a manifold-like space with induced
  Dirichlet form $\bar{\qf E}$. Then the heat kernel satisfies the
  exponential decay bound in \Def{heat_kernel_decay}.
\end{corollary}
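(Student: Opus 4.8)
The plan is to verify the hypotheses of \Thm{heat_kernel_original} for the manifold-like space $(\bar M,\bar d,\bar\mu)$ equipped with $\bar{\qf E}$, and then to reformulate the resulting estimate exactly as in the proof of \Cor{heat_kernel_decay}. Three of the four ingredients are already at hand: $\bar{\qf E}$ is a regular, strongly local Dirichlet form by \Prp{dirichlet_form_on_manifold-like}; the volume doubling property holds by \Thmenum{prop.identif.space}{prop.identif.space.b}; and the energy metric $\rho$ induces the same topology as $\bar d$ by the preceding lemma on the equivalence of the energy metric with $\bar d$. Since $\bar M$ is $N$-Alfohrs-regular by \Thmenum{prop.identif.space}{prop.identif.space.a}, the volume of a ball of radius $\sqrt t$ is comparable to $t^{N/2}$, so once \Thm{heat_kernel_original} applies the two volume factors can be replaced by $t^{-N/2}$ and $\rho$ by $\bar d$, yielding the bound of \Def{heat_kernel_decay} with $n=N$.

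The one genuinely new ingredient is the Sobolev inequality on $\bar M$. The difficulty is that glueing does \emph{not} preserve the $(K,N)$-MCP property, so the parabolic Harnack inequality of~\cite{Sturm06b} used in \Cor{heat_kernel_decay} is unavailable directly on $\bar M$; instead I would lift the inequality from $M$. Recall that $\map{\pi^*}{\dom\bar{\qf E}}{\dom\qf E}$, $\pi^*\bar f=\bar f\circ\pi$, is an isometry onto the subspace of $\sim$-invariant functions, that $\bar\mu=\pi_*\mu$, and that the energy measure of \Thm{existence_measure} is compatible with the lift, i.e.\ $\int_{\bar B}\dd\bar\Gamma(\bar f)=\int_{\pi^{-1}(\bar B)}\dd\Gamma(\pi^*\bar f)$ for Borel $\bar B\subseteq\bar M$ (this follows from $\bar{\qf E}=\qf E\circ\pi^*$ and strong locality, together with the fact that the glued set is $\mu$-null and $\pi$ is a local isometry off it). Consequently, for $\bar f$ supported in a ball $B_r(\bar x)$, all three integrals appearing in the Sobolev inequality on $\bar M$ coincide with the corresponding integrals of the lift $f=\pi^*\bar f$ over the preimage $\pi^{-1}(B_r(\bar x))$.

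It then remains to compare $\pi^{-1}(B_r(\bar x))$ with genuine balls in $M$. The key geometric observation is that $\pi$ is finite-to-one, with at most $k$ preimages, and is a local isometry away from the glued set, so for $r$ below a scale $r_0>0$ that, by compactness of $\bar M$, may be chosen uniformly, the preimage $\pi^{-1}(B_r(\bar x))$ is contained in the union of the balls $B_r(x_i)$ around the finitely many preimages $x_i\in\pi^{-1}(\bar x)$, and these may be taken pairwise disjoint. On each $B_r(x_i)$ I would apply the Sobolev inequality already established for the MCP space $M$ to the piece $f\restr{B_r(x_i)}$ (which is Lipschitz and compactly supported in the open ball once one reduces to $\bar f$ in the core $\LipCont{\bar M}$), and sum the at most $k$ resulting inequalities. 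Using $N$-Alfohrs-regularity to replace each factor $\mu(B_r(x_i))$ and $\bar\mu(B_r(\bar x))$ by a constant multiple of $r^N$, the summed inequalities assemble into the Sobolev inequality for $\bar{\qf E}$ on $B_r(\bar x)$, with a constant depending only on $k$ and the Alfohrs constant.

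I expect the main obstacle to be exactly this last, geometric, step: making the uniform choice of scale $r_0$ rigorous and controlling the preimage of a ball when $\bar x$ lies on an extended part of the glued set, or when two faces $F_1,F_2$ of a self-glueing come close, where the naive covering by disjoint balls can fail. For radii $r\ge r_0$ the inequality is not governed by the glueing and follows from compactness together with the volume doubling property, so only the small-scale regime requires the lifting argument above. Granting the Sobolev inequality, \Thm{heat_kernel_original} yields the heat kernel estimate, which reformulates into the exponential decay bound of \Def{heat_kernel_decay} precisely as in the proof of \Cor{heat_kernel_decay}.
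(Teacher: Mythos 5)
Your overall route is exactly the paper's: apply \Thm{heat_kernel_original}, observe that regularity, strong locality, volume doubling and the equivalence of the energy metric with $\bar d$ are already established for $(\bar M,\bar d,\bar\mu)$, so that the only missing hypothesis is the Sobolev inequality, which is to be transferred from the underlying MCP space $M$ via the lift $\pi^*$; then use $N$-Alfohrs-regularity to rewrite the volume factors as $t^{-N/2}$ as in \Cor{heat_kernel_decay}. The paper disposes of the Sobolev transfer in a single sentence (``as $\bar{\qf E}$ is a restriction of $\qf E$ and $\bar\mu$ is the push forward measure, the Sobolev inequality also holds on $\bar M$''), whereas you try to actually prove it. That extra care is to your credit, but it exposes a genuine gap, which you flag without closing.

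The gap is in your ``key geometric observation''. It is not true, even for arbitrarily small $r$, that $\pi^{-1}(B_r(\bar x))$ is contained in $\bigcup_i B_r(x_i)$ over the preimages $x_i$ of $\bar x$: as soon as $B_r(\bar x)$ meets the glued set, its preimage spills into other leaves at points that are far from every $x_i$ in $M$ (indeed at infinite distance if the leaves are distinct components of $M$). Concretely, glue two copies of $[0,1]$ at their left endpoints and take $\bar x=1/2$ on the first copy; for $r$ slightly larger than $1/2$ the ball $B_r(\bar x)$ contains points near $0$ on the second copy, whose unique preimages lie in the other component of $M$. Since $\bar x$ may be taken arbitrarily close to (or on) the glued set, no uniform $r_0$ separates a ``good'' small-scale regime from this phenomenon, so compactness does not rescue the covering claim. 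A second, related problem is that even where a containment $A_i\subseteq B_r(x_i)$ holds, the restriction $f\restr{B_r(x_i)}$ of the lift need not be compactly supported in $B_r(x_i)$, so the Sobolev inequality on $M$ does not apply to it as stated. (The direction of the summation is not the issue: $\sum_i a_i^\theta\ge(\sum_i a_i)^\theta$ for $\theta=(N^*-2)/N^*\le 1$, so assembling the pieces would be fine if the local inequalities were available.) To repair the argument one would have to work with preimages of balls directly --- e.g.\ establish a Sobolev (or Poincar\'e plus doubling) inequality on the sets $\pi^{-1}(B_r(\bar x))$, using that they are unions of at most $k$ convex MCP pieces meeting along the glued faces --- or invoke a stability result for parabolic Harnack inequalities under finite glueings. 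Be aware that the paper's own proof silently assumes exactly the step you could not complete.
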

\begin{proof}
  We are going to use \Thm{heat_kernel_original} again. The only
  assumption that is missing is the Sobolev inequality.
 
  Let $(M,d,\mu)$ be the $(K,N)$-MCP space that $\bar M$ was obtained
  from.  The Sobolev inequality holds on $M$ by~\cite{Sturm06b}. As
  $\bar{\qf E}$ is defined as a restriction of $\qf E$ and the measure
  $\bar \mu$ on $\bar M$ is just the push forward measure, the Sobolev
  inequality also holds on $\bar M$.
\end{proof}

The manifold-like spaces we defined in \Sec{the.space} provide a large
class of examples where locality holds:
\begin{maintheorem}
  \label{mthm:manifold_like_spaces}
  Let $(M,d,\mu)$ and $(\oS M, \oS d, \oS \mu)$ be two manifold-like
  spaces. Let $\qf E$ and $\qf {\oS E}$ be the natural Dirichlet forms
  on $M$ and $\oS M$ from \Prp{isomorphism_induces_form_equivalence}.
  Let $p_t$ and $\oS p_t$ be the associated heat kernels and let
  $\Proba$ and $\oS \Proba$ the associated Wiener measures.
 
  Let $U \subset M$ be open and assume there exists a measure
  preserving isometry $\map \psi U {\oS U \subset \oS M}$.
 
  Then the Wiener measures $\Proba$ and $\oS \Proba$ are identical on
  $U$.   
  
  Let $V$ be open with $\clo V \subset U$ and let $x,y \in V$.
  Then the difference of the heat kernels is
  exponentially small, that is
  \begin{align*}
    \abs{p_t(x,y) - \oS p_t(\psi(x), \psi(y))} 
    \le C\e^{- \eps/t}
  \end{align*}
  for $\mu$-almost all $x,y \in V$ and all $t \in (0,T]$. The asymptotic expansions
  of $p_t$ and $\oS p_t$ agree on $V$.
\end{maintheorem}
\begin{proof}
  If $\psi$ is a measure preserving isometry, then the Dirichlet forms
  and the operators are equivalent by
  \Prp{isomorphism_induces_form_equivalence}. Hence we get equivalence
  of the Wiener measures by \Mthm{locality_wiener_measure}. The heat
  kernels associated to these Dirichlet forms satisfy the heat kernel
  decay bound by corollary \Cor{heat_kernel_decay}. Thus we can apply
  \Mthm{locality_heat_kernel} and get locality of the heat kernel.
\end{proof}

%
\section{Example application: a two particle system on a metric graph}
\label{sec:example}
%

Let $G=(V,E)$ be a compact metric graph with vertex set $V$ and edge
set $E$. A metric graph is a combinatorial graph together with an
assignment of edge lengths. The operator is the Laplacian, that is the
second derivative on the edges seen as intervals. We impose the
standard Kirchhoff boundary conditions at all vertices. This means
functions are continuous and the sum of the first derivatives on all
edges adjacent to a vertex is zero (the derivatives are oriented away
from the vertex). A metric graph together with the operator is called
a quantum graph. See for example~\cite{BerkolaikoKuchment13} for an
introduction and a survey of quantum graphs.
 
The manifold-like space we will look at is $M:= G \times G /{\sim}$
with $(x,y) \sim (y,x)$. This is a model from physics, it corresponds
to two particles moving freely on a metric graph. The particles do not
interact and they are indistinguishable, hence we factor out by the
symmetry.
 
This is a $2$-dimensional space which is neither a manifold nor an orbifold. To the best of our knowledge, the results in this paper are the first to explicitly show that these kind of spaces do have `well-behaved' heat kernels.
 
In order to compute the heat asymptotics of the heat kernel for this
system, we will decompose the state space of the two particles into
various pieces. This is were the locality of the heat kernel comes
in. For each piece we will explicitly compute the heat kernel of a
different space which is locally isometric to the piece of $M$ but
globally a much simpler space. For these much simpler spaces one can
write down an explicit expression of the heat kernel and use it to
compute the asymptotics.

Pick a universal $\eps >0$ much smaller than any edge length. We say
that a particle is in the neighbourhood of a vertex if it less than
$\eps$ away from it.  We decompose $M$ into the following types of
pieces.
\begin{enumABC}
\item both particles are away from vertices and on distinct edges
\item both particles are away from vertices and on the same edge
\item one particle is in a neighbourhood of a vertex, the other one is
  away from the vertices on an edge
\item the particles are in neighbourhoods of two distinct vertices
\item both particles are in the neighbourhood of a vertex
\end{enumABC}
Note that the cutoffs between the pieces need to be made in a way that
intersects the singular pieces of $M$ orthogonally otherwise these
cutoffs will produce additional terms in the asymptotics. 

For the pieces of type $A$, $C$ and $D$ the particles cannot run into
each other. So the heat kernel is just the product of the heat kernels
of the two pieces. For pieces of type $B$ and $E$ the heat kernel is
the product of the individual heat kernels modded out by the symmetry.
 
For a single particle away from vertices we can just use the real line
as a comparison space, the heat kernel is
$p_{\R}(t,x,y)=\frac{1}{\sqrt{4\pi t}}\e^{-(x-y)^2/4t}$. For a
particle in the neighbourhood of a vertex we use the star shaped
metric graph consisting of $k$ half-infinite edges all meeting in a
single central vertex as a comparison space.  Its heat kernel can be
written down explicitly. For $\alpha, \beta \in [1, \hdots, \deg(v)]$,
we write $x^{\alpha}$ when $x$ is on the edge $\alpha$. The heat
kernel is then given by 
\begin{align*}
  p_{S}(t,x^{\alpha},y^{\beta}) 
  = \frac{1}{\sqrt{4\pi t}}\left(\delta_{\alpha \beta}\e^{-(x^{\alpha}-y^{\beta})^2/4t} 
    + \sigma_{\alpha \beta}^v\e^{-(x^{\alpha}+y^{\beta})^2/4t}\right)
\end{align*}
(see e.g.~\cite{Roth83} or \cite{KPS07} for a more general version), where $\delta_{\alpha \beta}$ is the
Kronecker-$\delta$ and $\sigma_{\alpha \beta}^v$ is the matrix of the
boundary conditions at the vertex. For Kirchhoff conditions we have
$\sigma_{\alpha \beta}^v=-\delta_{\alpha \beta} + \frac{2}{\deg(v)}$.
 
We will just carry out the computations for pieces of type $C$ and
$E$. The other types work in exactly the same way.

Let $N(v)$ denote the $\eps$-neighbourhood of the vertex $v$ and
$l_{\gamma}$ the edge where the other particle is located. The
particles do not interact, hence we just multiply a heat kernel of the
star graph with a heat kernel on the real line. 

We can now apply \Mthm{manifold_like_spaces} which says that the heat kernel on $M$ in the region C and the heat kernel on the comparison space differ only by an exponentially small error term. Hence,
on the diagonal the heat kernel of the region C is given by
\begin{align*}
  p_C(t,(x_1^{\alpha},x_2),(x_1^{\alpha},x_2)) =\frac{1}{4\pi
    t}\left(1 + \sigma^v_{\alpha \alpha}\e^{-(x_1^{\alpha})^2/t}
  \right) +O(t^\infty)
\end{align*}
Here and in further computations we use the notation $O(t^{\infty})$ to mean an error term that can be bounded as $O(t^k)$ for any $k$. Such an error will make no contribution to the asymptotics we are interested in.
Thus the contribution to the asymptotics is
\begin{align*}
  &\int_C p_C(t,(x_1,x_2),(x_1,x_2))\dd x_1\dd x_2 \\
  =&\frac{1}{4\pi t}(l_{\gamma}-2\eps)\eps \deg(v) 
  + \frac{1}{4\pi t} (l_{\gamma}-2\eps)
  \sum_{\alpha \sim v}\sigma^v_{\alpha \alpha} 
  \int_{0}^{\eps} \e^{-(x_1^{\alpha})^2/t}\dd x_1^{\alpha}\\
  =&\frac{1}{4\pi t}\vol(C) + \frac{1}{8\sqrt{\pi
      t}}(l_{\gamma}-2\eps)\sum_{\alpha \sim v}\sigma^v_{\alpha
    \alpha} +O(t^\infty)
\end{align*}

To get an expression for the heat kernel on the diagonal of the region
E, we also need to know it in a neighbourhood of the diagonal. In a
neighbourhood of the diagonal we can just assume that both points on
$M$ are in the region $E$.

For two distinguishable particles, the heat kernel on the domain $N(v)
\times N(v)$ is just the product, that is
\begin{align*}
  &p_{N(v) \times N(v)}(t,(x_1^{\alpha},x_2^{\beta}),(y_1^{\gamma},y_2^{\delta})) \\
  =&\frac{1}{4\pi t} \left(\delta_{\alpha
      \gamma}\e^{-(x_1^{\alpha}-y_1^{\gamma})^2/4t} + \sigma^v_{\alpha
      \gamma}\e^{-(x_1^{\alpha}+y_1^{\gamma})^2/4t}\right)\cdot
  \left(\delta_{\beta \delta}\e^{-(x_2^{\beta}-y_2^{\delta})^2/4t} +
    \sigma^v_{\beta
      \delta}\e^{-(x_2^{\beta}+y_2^{\delta})^2/4t}\right)
 \end{align*}
 We now factor out by the isometry $\phi: (x_1,x_2) \mapsto
 (x_2,x_1)$. To get the heat kernel on $\oS{E}:=N(v) \times N(v)
 \slash_{(x_1,x_2) \sim \phi(x_1,x_2)}$ we use a result
 of~\cite{Donnelly79} which says that in our setting
 $p_{\oS{E}}(t,x,y)=p_{N(v)\times N(v)}(t,x,y)+ p_{N(v)\times
   N(v)}(t,\phi(x),y)$.  We parametrise $\oS{E}$ as follows,
 $(x_1^{\alpha},x_2^{\beta}) \in \oS{E}$ where $(x_1^{\alpha},x_2^{\beta})
 \in N(v)\times N(v)$ and either $\alpha < \beta$ or ($\alpha=\beta$
 and $x_1^{\alpha} \le x_2^{\alpha}$). Now by \Mthm{manifold_like_spaces} the heat kernel on $M$ in the region $E$ is equal to the one on $\oS{E}$ up to an exponentially small error term. Hence 
\begin{align*}
  &p_{E}(t,(x_1^{\alpha},x_2^{\beta}),(y_1^{\gamma},y_2^{\delta})) \\
  =&\frac{1}{4\pi t} \left(\delta_{\alpha
      \gamma}\e^{-(x_1^{\alpha}-y_1^{\gamma})^2/4t} + \sigma^v_{\alpha
      \gamma}\e^{-(x_1^{\alpha}+y_1^{\gamma})^2/4t}\right)\cdot
  \left(\delta_{\beta \delta}\e^{-(x_2^{\beta}-y_2^{\delta})^2/4t}
    + \sigma^v_{\beta \delta}\e^{-(x_2^{\beta}+y_2^{\delta})^2/4t}\right) \\
  &+ \frac{1}{4\pi t} \left(\delta_{\beta
      \gamma}\e^{-(x_2^{\beta}-y_1^{\gamma})^2/4t} + \sigma^v_{\beta
      \gamma}\e^{-(x_2^{\beta}+y_1^{\gamma})^2/4t}\right)\cdot
  \left(\delta_{\alpha \delta}\e^{-(x_1^{\alpha}-y_2^{\delta})^2/4t} +
    \sigma^v_{\alpha
      \delta}\e^{-(x_1^{\alpha}+y_2^{\delta})^2/4t}\right)
  +O(t^\infty)
\end{align*}
On the diagonal this gives
\begin{align*}
  &p_{E}(t,(x_1^{\alpha},x_2^{\beta}),(x_1^{\alpha},x_2^{\beta})) \\
  =&\frac{1}{4\pi t} \left(1 + \sigma^v_{\alpha
      \alpha}\e^{-(x_1^{\alpha})^2/t}\right)\cdot
  \left(1 + \sigma^v_{\beta \beta}\e^{-(x_2^{\beta})^2/t}\right) \\
  & + \frac{1}{4\pi t} \left(\delta_{\alpha
      \beta}\e^{-(x_1^{\alpha}-x_2^{\beta})^2/4t} + \sigma^v_{\alpha
      \beta}\e^{-(x_1^{\alpha}+x_2^{\beta})^2/4t}\right)^2
  +O(t^\infty)
 \end{align*}

 We will separate the integration into two parts, first the region
 where $\alpha < \beta$ and second the region where $\alpha=\beta$ and
 $x_1^{\alpha}\le x_2^{\alpha}$. In the first region we can integrate
 over the rectangle $[0,\eps]\times [0,\eps]$ for each pair of edges.
\begin{align*}
  &\sum_{\alpha < \beta}\int_0^{\eps}\int_0^{\eps}
  p_{E}(t,(x_1^{\alpha},x_2^{\beta}),(x_1^{\alpha},x_2^{\beta}))
  \dd x_1^{\alpha}\dd x_2^{\beta}\\
  =&\frac{1}{4\pi t}\sum_{\alpha < \beta}\int_0^{\eps}1 +
  \sigma^v_{\alpha \alpha}\e^{-(x_1^{\alpha})^2/t}\dd x_1^{\alpha}
  \int_0^{\eps} 1 + \sigma^v_{\beta \beta}\e^{-(x_2^{\beta})^2/t}\dd x_2^{\beta}\\
  & + \frac{1}{4\pi t}\sum_{\alpha < \beta}(\sigma^v_{\alpha
    \beta})^2\int_0^{\eps}\int_0^{\eps}
  \e^{-(x_1^{\alpha}+x_2^{\beta})^2/2t}\dd x_1^{\alpha}\dd x_2^{\beta} +O(t^\infty)\\
  =&\frac{1}{4\pi t}\sum_{\alpha < \beta} \left( \eps +
    \sigma^v_{\alpha \alpha}\frac{\sqrt{\pi}}{2}t^{1/2}\right)\cdot
  \left( \eps + \sigma^v_{\beta
      \beta}\frac{\sqrt{\pi}}{2}t^{1/2}\right)
  +\frac{1}{4\pi t}\sum_{\alpha < \beta} (\sigma^v_{\alpha \beta})^2 t  + O(t^\infty)\\
  =&\frac{1}{4\pi t}\frac{1}{2}\deg(v)(\deg(v)-1)\eps^2
  + \frac{1}{8\sqrt{\pi t}}\sum_{\alpha < \beta} 
  (\sigma^v_{\alpha \alpha}+\sigma^v_{\beta \beta})\eps \\
  & + \frac{1}{16}\sum_{\alpha < \beta}\sigma^v_{\alpha \alpha}
  \sigma^v_{\beta \beta}
  +\frac{1}{4\pi}\sum_{\alpha < \beta}(\sigma^v_{\alpha \beta})^2  + O(t^\infty)\\
\end{align*}

For the second part, where $\alpha=\beta$ we will drop the superscript
and write $x_1$ and $x_2$ for $x_1^{\alpha}$ and $x_2^{\alpha}$ to
simplify notation.
\begin{align*}
  &e_{E}(t,(x_1,x_2),(x_1,x_2)) \\
  =&\frac{1}{4\pi t} \left(1 + \sigma^v_{\alpha
      \alpha}\e^{-x_1^2/t}\right)\cdot \left(1 + \sigma^v_{\alpha
      \alpha}\e^{-x_2^2/t}\right) + \frac{1}{4\pi t}
  \left(\e^{-(x_1-x_2)^2/4t}
    + \sigma^v_{\alpha \alpha}\e^{-(x_1+x_2)^2/4t}\right)^2 +O(t^\infty)\\
  =& \frac{1}{4\pi t} + \frac{1}{4\pi t}\sigma^v_{\alpha
    \alpha}\e^{-x_1^2/t}+\frac{1}{4\pi t}\sigma^v_{\alpha
    \alpha}\e^{-x_2^2/t}
  +\frac{1}{4\pi t}(\sigma^v_{\alpha \alpha})^2\e^{-(x_1^2+x_2^2)/t}\\
  &+\frac{1}{4\pi t} \e^{-(x_1-x_2)^2/2t} +\frac{1}{2\pi
    t}\sigma^v_{\alpha \alpha}\e^{-(x_1^2+x_2^2)/2t}
  +\frac{1}{4\pi t}(\sigma^v_{\alpha \alpha})^2\e^{-(x_1+x_2)^2/2t} +O(t^\infty)\\
 \end{align*}

 In the second region we will integrate over a difference of two
 triangles to ensure that the region meets the boundary orthogonally.
 The first triangle is described by $0 \le x_2 \le x_1 \le \eps$ with
 area $\frac{1}{2}\eps^2$. The second triangle is the region where
 $2^{-1/2}\eps \le x_1 \le \eps$ and $2^{1/2}\eps-x_1 \le x_2 \le x_1$
 with area
 $\frac{1}{2}(1-2^{-1/2})^2\eps^2=(\frac{3}{8}-2^{-1/2})\eps^2$.
 Therefore
 \begin{align*}
   & 4\pi t\int_0^{\eps}\int_0^{x_1}e_{E}(t,(x_1,x_2),(x_1,x_2)) \dd x_2\dd x_1 \\
   & - 4\pi t\int_{2^{-1/2}\eps}^{\eps}\int_{2^{1/2}\eps-x_1}^{x_1}
   e_{E}(t,(x_1,x_2),(x_1,x_2)) \dd x_2\dd x_1 \\
   =& \frac{1}{2}\eps^2 + \sigma^v_{\alpha
     \alpha}\int_0^{\eps}x_1\e^{-x_1^2/t}\dd x_1
   +\sigma^v_{\alpha \alpha}\int_0^{\eps}\int_0^{x_1}\e^{-x_2^2/t}\dd x_2\dd x_1 \\
   & + (\sigma^v_{\alpha
     \alpha})^2\int_0^{\eps}\int_0^{x_1}\e^{-(x_1^2+x_2^2)/t}\dd
   x_2\dd x_1
   +\int_0^{\eps}\int_0^{x_1} \e^{-(x_1-x_2)^2/2t}  \dd x_2\dd x_1\\
   &+2\sigma^v_{\alpha
     \alpha}\int_0^{\eps}\int_0^{x_1}\e^{-(x_1^2+x_2^2)/2t} \dd x_2\dd
   x_1
   +(\sigma^v_{\alpha \alpha})^2\int_0^{\eps}\int_0^{x_1}\e^{-(x_1+x_2)^2/2t}
   \dd x_2\dd x_1 \\
   & - (\frac{3}{8}-2^{-1/2})\eps^2
   - \sigma^v_{\alpha \alpha}\int_{2^{-1/2}\eps}^{\eps}(2x_1-2^{1/2}\eps) 
   \e^{-x_1^2/t} \dd x_1\\
   & -\sigma^v_{\alpha
     \alpha}\int_{2^{-1/2}\eps}^{\eps}\int_{2^{1/2}\eps-x_1}^{x_1}\e^{-x_2^2/t}
   \dd x_2\dd x_1
   -(\sigma^v_{\alpha \alpha})^2\int_{2^{-1/2}\eps}^{\eps}
   \int_{2^{1/2}\eps-x_1}^{x_1}\e^{-(x_1^2+x_2^2)/t} \dd x_2\dd x_1\\
   &-\int_{2^{-1/2}\eps}^{\eps}\int_{2^{1/2}\eps-x_1}^{x_1}
   \e^{-(x_1-x_2)^2/2t} \dd x_2\dd x_1
   -2\sigma^v_{\alpha \alpha}\int_{2^{-1/2}\eps}^{\eps}
   \int_{2^{1/2}\eps-x_1}^{x_1}\e^{-(x_1^2+x_2^2)/2t} \dd x_2\dd x_1\\
   &-(\sigma^v_{\alpha
     \alpha})^2\int_{2^{-1/2}\eps}^{\eps}\int_{2^{1/2}\eps-x_1}^{x_1}\e^{-(x_1+x_2)^2/2t}
   \dd x_2\dd x_1
   +O(t^\infty)\\
   =& \frac{1}{2}\eps^2 + \sigma^v_{\alpha \alpha}\frac{t}{2}
   +\sigma^v_{\alpha \alpha}(-\frac{1}{2}t + 
   \frac{\sqrt{\pi}}{2}\eps t^{1/2}) + (\sigma^v_{\alpha \alpha})^2\frac{1}{8}\pi t\\
   & - t + 2^{-1/2}\sqrt{\pi}\eps t^{1/2}
   +2\sigma^v_{\alpha \alpha}\frac{1}{4}\pi t + 
   (\sigma^v_{\alpha \alpha})^2\frac{1}{2}t \\
   &- (\frac{3}{8}-2^{-1/2})\eps^2  - 0 -0 - 0 -
   2^{-1/2}\sqrt{\pi}(1-2^{-1/2})\eps t^{1/2}+\frac{t}{2}\\
   & - 0 - 0 + O(t^\infty) \\
   =& \left(\frac{1}{2}- \frac{3}{8}+2^{-1/2}\right)\eps^2 +
   \sigma^v_{\alpha \alpha}\frac{\sqrt{\pi}}{2}\eps t^{1/2} +
   \frac{\sqrt{\pi}}{2}\eps t^{1/2}\\
   &+ (\sigma^v_{\alpha \alpha})^2\frac{1}{8}\pi t - t +
   \sigma^v_{\alpha \alpha}\frac{1}{2}\pi t +(\sigma^v_{\alpha
     \alpha})^2\frac{1}{2}t +\frac{t}{2} + O(t^\infty)
 \end{align*}
 The various terms that give zero contribution can all be bounded by
 noticing that the integrated function can be bounded as $\e^{-C/t}$
 for some constant $C>0$ over the entire domain of integration.

 Let $\Omega$ denote the region we integrated over, then this gives
 the following contribution
\begin{align*}
  & \int_\Omega e_{E}(t,(x_1,x_2),(x_1,x_2)) \dd\vol \\
  =& \frac{1}{4\pi t}\vol(\Omega)
  +  \frac{1}{8\sqrt{\pi t}}\left( \sigma^v_{\alpha \alpha}+1\right)\eps \\
  & -\frac{1}{8\pi} + \frac{1}{8}\sigma^v_{\alpha \alpha} +
  \left(\frac{1}{32}+\frac{1}{8\pi}\right) (\sigma^v_{\alpha
    \alpha})^2 + O(t^\infty)
\end{align*}
so for the entire region $E$ we get
\begin{align*}
  & \int_{E}e_{E}(t,(x_1,x_2),(x_1,x_2)) \dd\vol \\
  =& \frac{1}{4\pi t}\vol(E) + \frac{1}{8\sqrt{\pi
      t}}\sum_{\alpha}\left( \sigma^v_{\alpha \alpha}+1\right)\eps +
  \frac{1}{8\sqrt{\pi t}}\sum_{\alpha < \beta}
  (\sigma^v_{\alpha \alpha}+\sigma^v_{\beta \beta})\eps \\
  & -\frac{\deg(v)}{8\pi} + \frac{1}{8}\sum_{\alpha}\sigma^v_{\alpha
    \alpha} + \left(\frac{1}{32}+\frac{1}{8\pi}\right)\sum_{\alpha}
  (\sigma^v_{\alpha \alpha})^2 \\
  & + \frac{1}{16}\sum_{\alpha < \beta}\sigma^v_{\alpha \alpha}
  \sigma^v_{\beta \beta}
  +\frac{1}{4\pi}\sum_{\alpha < \beta}(\sigma^v_{\alpha \beta})^2  + O(t^\infty)\\
\end{align*}
 
After doing a similar computation for the pieces of type $A$, $B$ and
$D$ we get the following heat asymptotics.
 
\begin{theorem}
  \label{thm:main-example}
  Let $M:= G \times G / ((x,y)\sim(y,x))$ where $G=(V,E)$ is a metric
  graph with the standard Kirchhoff boundary conditions at all
  vertices. Then the heat kernel asymptotics of $M$ are
 \begin{align*}
   & \int_{M} p(t,(x_1,x_2),(x_1,x_2)) \dd x_1\dd x_2 \\
   \to_{t \to 0^+}& \frac{1}{4\pi t}\vol(M) 
   + \frac{1}{8\sqrt{\pi t}}\left(2L(G)(|V|-|E|) +  \sqrt{2}L(G) \right) \\
   &+\frac{1}{16} \sum_{v \neq v'}(2-\deg(v))(2-\deg(v'))
   + \sum_{v \in V}\left(\frac{3}{8}-\frac{\deg(v)}{4}
     +\frac{\deg(v)^2}{32} \right) 
   + O(t^\infty)\\
 \end{align*}
\end{theorem}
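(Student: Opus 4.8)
The plan is to prove the asymptotics by decomposing $M$ into the five region types A--E, replacing the heat kernel on each region by that of a globally simpler but locally isometric model, and then summing the contributions. The essential tool is \Mthm{manifold_like_spaces}: for each piece there is a manifold-like space built from copies of the real line $\R$ and star graphs $S$ (possibly quotiented by the exchange symmetry) that is isometric to a neighbourhood of that piece, so the on-diagonal heat kernel of $M$ agrees with that of the model up to an error of order $O(t^\infty)$. It therefore suffices to integrate the explicit model kernels $p_\R$ and $p_S$ over each region and collect powers of $t$.

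First I would fix the comparison spaces and read off the on-diagonal expansions. For pieces of type A and D the two particles cannot meet, so the model kernel is a product: $p_\R\cdot p_\R$ for type A, and $p_S\cdot p_S$ at the two distinct vertices for type D. For type C the model is $p_S\cdot p_\R$, which is computed above. For the interacting pieces B and E the particles lie on the same edge or near the same vertex, so the model is the corresponding product quotiented by $\phi(x_1,x_2)=(x_2,x_1)$; by \cite{Donnelly79} its heat kernel is $p(t,x,y)+p(t,\phi(x),y)$, as already used for type E. In every case one integrates the diagonal value over the region and expands the Gaussian integrals in powers of $t^{1/2}$.

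Next I would assemble the terms. The leading terms $\tfrac1{4\pi t}\vol(\cdot)$ from all five region types sum to $\tfrac1{4\pi t}\vol(M)$. For the half-order terms, the plain $t^{-1/2}$ contributions come from a single particle near a vertex (types C and E), governed by $\sum_\alpha\sigma^v_{\alpha\alpha}=2-\deg(v)$; since $\sum_{v}(2-\deg(v))=2(|V|-|E|)$ and the other particle ranges over the bulk of the graph (total length $L(G)$, the $\eps$-dependent corrections from C and E cancelling), these combine into the $2L(G)(|V|-|E|)$ term. The extra $\sqrt2\,L(G)$ term arises from the exchange Gaussian $\e^{-(x_1-x_2)^2/2t}$ on the same-edge pieces B and E, whose $(x_1-x_2)$-integral produces the characteristic factor $2^{-1/2}$ and, summed over edges, the total length $L(G)$. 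The constant ($t^0$) terms split into a two-vertex part from type D, giving $\tfrac1{16}\sum_{v\ne v'}(2-\deg(v))(2-\deg(v'))$ through the product $(\sum_\alpha\sigma^v_{\alpha\alpha})(\sum_\beta\sigma^{v'}_{\beta\beta})$, and a single-vertex part from type E.

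The main obstacle will be the bookkeeping of the constant term for type E, where one must control the four combinatorial sums $\sum_\alpha\sigma^v_{\alpha\alpha}$, $\sum_\alpha(\sigma^v_{\alpha\alpha})^2$, $\sum_{\alpha<\beta}\sigma^v_{\alpha\alpha}\sigma^v_{\beta\beta}$ and $\sum_{\alpha<\beta}(\sigma^v_{\alpha\beta})^2$, then substitute the Kirchhoff values $\sigma^v_{\alpha\beta}=-\delta_{\alpha\beta}+2/\deg(v)$ and verify that all $1/\pi$ contributions cancel, leaving the polynomial $\tfrac38-\tfrac{\deg(v)}4+\tfrac{\deg(v)^2}{32}$. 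A secondary subtlety, already flagged above, is that the cutoffs between regions must meet the diagonal and vertex strata orthogonally, since otherwise the truncated Gaussians pick up spurious $t^{1/2}$ terms; the difference-of-triangles integration domain used for type E is precisely the device enforcing this, and the same care must be exercised for types B and D.
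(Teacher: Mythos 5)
Your proposal follows essentially the same route as the paper: the same decomposition into region types A--E, the same comparison spaces ($\R$, star graphs, and their quotients via the Donnelly symmetrization), the same appeal to \Mthm{manifold_like_spaces} for the $O(t^\infty)$ replacement, and the same orthogonal-cutoff device for region E; your attribution of each term in the final formula to its source region also matches the paper's computation. The paper simply carries out the integrals explicitly for types C and E and leaves A, B, D to the reader, so your plan is the intended proof.
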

The first two terms of the asymptotic expansion are the volume and the
lengths of the boundary (the boundary here consists of the terms from
the product and the symmetrization).  The constant term describes the
corners and again has contributions from the product and the
symmetrization.

A vertex of degree $2$ in a metric graph with Kirchhoff boundary
condition imposes no conditions on the functions.  Hence this vertex
should be invisible to the heat kernel and one can easily check that
degree two vertices give no contribution to the asymptotics above.  A
vertex of degree one will produce a wedge with opening angle
$\frac{\pi}{4}$ in $M$.  This can be compared to the known heat
asymptotics of planar polygons~\cite{BergSrisatkunarajah88}. The
constant term contribution matches the expected $\frac{5}{32}$.

%
%

\ifthenelse{\isundefined \OlafsVersion}
{  
  \bibliographystyle{amsalpha}    
  \bibliography{literatur2}}
{   
  \bibliographystyle{/home/post/Aktuell/BibTeX/my-amsalpha}
  \bibliography{/home/post/Aktuell/BibTeX/literatur}

\def\cprime{$'$}
\providecommand{\bysame}{\leavevmode\hbox to3em{\hrulefill}\thinspace}
\providecommand{\MR}{\relax\ifhmode\unskip\space\fi MR }
\providecommand{\MRhref}[2]{%
  \href{http://www.ams.org/mathscinet-getitem?mr=#1}{#2}
}
\providecommand{\href}[2]{#2}
\begin{thebibliography}{DGGW08}

\bibitem[AGS13]{AGS13}
Luigi Ambrosio, Nicola Gigli, and Giuseppe Savar\'e, \emph{Heat flow and
  calculus on metric measure spaces with {R}icci curvature bounded below---the
  compact case}, Analysis and numerics of partial differential equations,
  Springer INdAM Ser., vol.~4, Springer, Milan, 2013, pp.~63--115.

\bibitem[BBI01]{BBI01}
Dmitri Burago, Yuri Burago, and Sergei Ivanov, \emph{A course in metric
  geometry}, Graduate Studies in Mathematics, vol.~33, American Mathematical
  Society, Providence, RI, 2001.

\bibitem[BG68]{BlumenthalGetoor68}
R.~M. Blumenthal and R.~K. Getoor, \emph{Markov processes and potential
  theory}, Pure and Applied Mathematics, Vol. 29, Academic Press, New
  York-London, 1968.

\bibitem[BH91]{BouleauHirsch91}
Nicolas Bouleau and Francis Hirsch, \emph{Dirichlet forms and analysis on
  {W}iener space}, de Gruyter Studies in Mathematics, vol.~14, Walter de
  Gruyter \& Co., Berlin, 1991.

\bibitem[BK13]{BerkolaikoKuchment13}
Gregory Berkolaiko and Peter Kuchment, \emph{Introduction to quantum graphs},
  Mathematical Surveys and Monographs, vol. 186, American Mathematical Society,
  Providence, RI, 2013.

\bibitem[BP11]{BaerPfaeffle11}
Christian B\"ar and Frank Pf\"affle, \emph{Wiener measures on {R}iemannian
  manifolds and the {F}eynman-{K}ac formula}, Matematica Contemporanea
  \textbf{40} (2011), 37--90.

\bibitem[CF78]{ChavelFeldman78}
I.~Chavel and E.~A. Feldman, \emph{Spectra of domains in compact manifolds}, J.
  Funct. Anal. \textbf{30} (1978), no.~2, 198--222.

\bibitem[CKS87]{CKS87}
E.~A. Carlen, S.~Kusuoka, and D.~W. Stroock, \emph{Upper bounds for symmetric
  {M}arkov transition functions}, Ann. Inst. H. Poincar\'e Probab. Statist.
  \textbf{23} (1987), no.~2, suppl., 245--287.

\bibitem[Dav80]{Davies80}
Edward~Brian Davies, \emph{One-parameter semigroups}, London Mathematical
  Society Monographs, vol.~15, Academic Press, Inc. [Harcourt Brace Jovanovich,
  Publishers], London-New York, 1980.

\bibitem[DGGW08]{DGGW08}
Emily~B. Dryden, Carolyn~S. Gordon, Sarah~J. Greenwald, and David~L. Webb,
  \emph{Asymptotic expansion of the heat kernel for orbifolds}, Michigan Math.
  J. \textbf{56} (2008), no.~1, 205--238.

\bibitem[Don79]{Donnelly79}
Harold Donnelly, \emph{Asymptotic expansions for the compact quotients of
  properly discontinuous group actions}, Illinois J. Math. \textbf{23} (1979),
  no.~3, 485--496.

\bibitem[EK86]{EthierKurtz86}
Stewart~N. Ethier and Thomas~G. Kurtz, \emph{Markov processes}, Wiley Series in
  Probability and Mathematical Statistics: Probability and Mathematical
  Statistics, John Wiley \& Sons, Inc., New York, 1986, Characterization and
  convergence.

\bibitem[FOT11]{FOT11}
Masatoshi Fukushima, Yoichi Oshima, and Masayoshi Takeda, \emph{Dirichlet forms
  and symmetric {M}arkov processes}, extended ed., de Gruyter Studies in
  Mathematics, vol.~19, Walter de Gruyter \& Co., Berlin, 2011.

\bibitem[Fuk80]{Fukushima80}
Masatoshi Fukushima, \emph{Dirichlet forms and {M}arkov processes},
  North-Holland Mathematical Library, vol.~23, North-Holland Publishing Co.,
  Amsterdam-New York; Kodansha, Ltd., Tokyo, 1980.

\bibitem[GK17]{GrigoryanKajino17}
Alexander Grigor{\cprime}yan and Naotaka Kajino, \emph{Localized upper bounds
  of heat kernels for diffusions via a multiple {D}ynkin-{H}unt formula},
  Trans. Amer. Math. Soc. \textbf{369} (2017), no.~2, 1025--1060.

\bibitem[Gri03]{Grigoryan03}
Alexander Grigor{\cprime}yan, \emph{Heat kernels and function theory on metric
  measure spaces}, Cont. Math. \textbf{338} (2003), 143--172.

\bibitem[GT12]{GrigoryanTelcs12}
Alexander Grigor{\cprime}yan and Andras Telcs, \emph{Two-sided estimates of
  heat kernels on metric measure spaces}, Ann. Probab. \textbf{40} (2012),
  no.~3, 1212--1284. \MR{2962091}

\bibitem[G{\"u}n17]{Gueneysu17}
Batu G{\"u}neysu, \emph{On the geometry of semiclassical limits on {D}irichlet
  spaces}, https://arxiv.org/abs/1701.04998 (2017).

\bibitem[Hsu95]{Hsu95}
Elton~P. Hsu, \emph{On the principle of not feeling the boundary for diffusion
  processes}, J. London Math. Soc. (2) \textbf{51} (1995), no.~2, 373--382.

\bibitem[Kac51]{Kac51}
M.~Kac, \emph{On some connections between probability theory and differential
  and integral equations}, Proceedings of the {S}econd {B}erkeley {S}ymposium
  on {M}athematical {S}tatistics and {P}robability, 1950, University of
  California Press, Berkeley and Los Angeles, 1951, pp.~189--215.

\bibitem[Kat80]{Kato80}
Tosio Kato, \emph{Perturbation theory for linear operators}, second ed.,
  Springer-Verlag, Berlin-New York, 1980, Grundlehren der Mathematischen
  Wissenschaften, Band 132.

\bibitem[KPS07]{KPS07}
Vadim Kostrykin, J{\"u}rgen Potthoff, and Robert Schrader, \emph{Heat kernels
  on metric graphs and a trace formula}, Adventures in mathematical physics,
  Contemp. Math., vol. 447, Amer. Math. Soc., Providence, RI, 2007,
  pp.~175--198.

\bibitem[MP49]{MinakshisundaramPleijel49}
S.~Minakshisundaram and {\AA}.~Pleijel, \emph{Some properties of the
  eigenfunctions of the {L}aplace-operator on {R}iemannian manifolds}, Canadian
  J. Math. \textbf{1} (1949), 242--256.

\bibitem[MR92]{MaRoeckner92}
Zhi~Ming Ma and Michael R{\"o}ckner, \emph{Introduction to the theory of
  (nonsymmetric) {D}irichlet forms}, Universitext, Springer-Verlag, Berlin,
  1992.

\bibitem[MT99]{MitreaTaylor99}
Marius Mitrea and Michael Taylor, \emph{Boundary layer methods for {L}ipschitz
  domains in {R}iemannian manifolds}, J. Funct. Anal. \textbf{163} (1999),
  no.~2, 181--251.

\bibitem[Oht07]{Ohta07}
Shin-Ichi Ohta, \emph{Products, cones, and suspensions of spaces with the
  measure contraction property}, J. Lond. Math. Soc. (2) \textbf{76} (2007),
  no.~1, 225--236.

\bibitem[Rot84]{Roth83}
Jean-Pierre Roth, \emph{Le spectre du laplacien sur un graphe}, Th\'eorie du
  potentiel ({O}rsay, 1983), Lecture Notes in Math., vol. 1096, Springer,
  Berlin, 1984, pp.~521--539.

\bibitem[Str05]{Stroock05}
Daniel~W. Stroock, \emph{An {I}ntroduction to the {A}nalysis of {P}aths on a
  {R}iemannian {M}anifold ({M}athematical {S}urveys and {M}onographs)},
  American Mathematical Society, Providence, Rhode Island, 2005.

\bibitem[Str11]{Stroock11}
Daniel~W. Stroock, \emph{Probability theory}, second ed., Cambridge University
  Press, Cambridge, 2011, An analytic view.

\bibitem[Stu95]{Sturm95}
Karl-Theodor Sturm, \emph{Analysis on local {D}irichlet spaces. {II}. {U}pper
  {G}aussian estimates for the fundamental solutions of parabolic equations},
  Osaka J. Math. \textbf{32} (1995), no.~2, 275--312.

\bibitem[Stu98]{Sturm98}
K.~T. Sturm, \emph{Diffusion processes and heat kernels on metric spaces}, Ann.
  Probab. \textbf{26} (1998), no.~1, 1--55.

\bibitem[Stu06a]{Sturm06a}
Karl-Theodor Sturm, \emph{On the geometry of metric measure spaces. {I}}, Acta
  Math. \textbf{196} (2006), no.~1, 65--131.

\bibitem[Stu06b]{Sturm06b}
\bysame, \emph{On the geometry of metric measure spaces. {II}}, Acta Math.
  \textbf{196} (2006), no.~1, 133--177.

\bibitem[SV79]{StroockVaradhan79}
D.~W. Stroock and S.R.S. Varadhan, \emph{Multidimensional diffusion processes},
  Die Grundlehren der mathematischen Wissenschaften 233, Springer, Berlin,
  1979.

\bibitem[vdBS88]{BergSrisatkunarajah88}
M.~van~den Berg and S.~Srisatkunarajah, \emph{Heat equation for a region in
  {${\bf R}^2$} with a polygonal boundary}, J. London Math. Soc. (2)
  \textbf{37} (1988), no.~1, 119--127. \MR{921750 (89e:35062)}

\end{thebibliography}
}

%
\appendix
\section{Measure theoretic background}
\label{app:measure}
%

\begin{definition}
  A function $(t,x,U) \mapsto p_t(x,U)$ with $t \in (0,\infty)$, $x
  \in M$ and $U \in \mathcal{B}(M)$ is called a \emph{$\mu$-symmetric
    Markovian transition function} if it satisfies the following
  \begin{enumerate}
  \item Measurability: for fixed $t$ and $x$, the function $p_t(x,
    \cdot)$ is a positive measure and for fixed $t$ and $U$, the
    function $p_t(\cdot, U)$ is $\mathcal{B}(M)$-measurable.
  \item Semigroup property: $p_tp_s f = p_{t+s}f$ for all $f \in
    \mathcal{B}_b(M)$
  \item Markov property: $p_t(x,M) = 1$
  \item $\mu$-symmetry: $\int_M f(x) (p_t g)(x) \dd\mu(x) = \int_M
    (p_tf)(x) g(x) \dd\mu(x)$ for any $f,g$ non-negative measurable
    functions
  \item Unit mass: $\lim_{t \to 0}p_tf (x) = f(x)$
  \end{enumerate}
  Here $(p_tf)$ is defined as $(p_tf)(x):=\int_M f(y) p_t(x,dy)$ where
  $p_t(x,dy)$ means we are integrating with respect to the measure
  $p_t(x, \cdot)$.
\end{definition}

The following proposition says that heat kernels and transition
functions are equivalent in our setting. The absolute continuity condition is
satisfied by \Rem{absolute_continuity}.

\begin{proposition}
  For $x$ and $t>0$ fixed, the probability measure $p_t(x,U)$ admits a
  probability density function $p_t(x,y)$ such that
  \begin{align*}
    p_t(x,U) = \int_U p_t(x,y)\dd\mu(y)
  \end{align*}
  for all measurable $U \in \mathcal{B}(M)$ if and only if the measure
  $p_t(x, \cdot)$ is absolutely continuous with respect to
  $\mu(\cdot)$, that is $p_t(x,U)=0$ for any set $U$ with $\mu(U)=0$.
\end{proposition}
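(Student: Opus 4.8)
The plan is to recognise this statement as a direct application of the Radon--Nikod\'ym theorem to the pair of measures $p_t(x,\cdot)$ and $\mu$ on $\mathcal{B}(M)$, and to check that the hypotheses of that theorem are satisfied in the present setting. The two implications are of very different weight: one direction is immediate, and the other is exactly Radon--Nikod\'ym once finiteness is verified.

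First I would dispatch the easy implication (existence of a density implies absolute continuity). Suppose there is a measurable function $y \mapsto p_t(x,y)$ with $p_t(x,U)=\int_U p_t(x,y)\dd\mu(y)$ for every $U \in \mathcal{B}(M)$. If $\mu(U)=0$, then the integral of any non-negative measurable function over $U$ vanishes, so $p_t(x,U)=0$. Hence $p_t(x,\cdot)$ is absolutely continuous with respect to $\mu$.

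For the converse — the substantive direction — I would invoke the Radon--Nikod\'ym theorem, whose only real requirement is $\sigma$-finiteness of the reference measure $\mu$. Since $(M,d)$ is compact and $\mu$ is a Radon measure, we have $\mu(M)<\infty$, so $\mu$ is finite and a fortiori $\sigma$-finite; moreover $p_t(x,\cdot)$ is a probability measure by the Markov property ($p_t(x,M)=1$), hence also finite. Assuming $p_t(x,\cdot)\ll\mu$, the Radon--Nikod\'ym theorem then yields a non-negative $\mathcal{B}(M)$-measurable function, which we denote $y\mapsto p_t(x,y):=\frac{\dd p_t(x,\cdot)}{\dd\mu}(y)$, satisfying $p_t(x,U)=\int_U p_t(x,y)\dd\mu(y)$ for all $U\in\mathcal{B}(M)$. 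This is precisely the claimed density.

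There is no genuine obstacle here beyond confirming the $\sigma$-finiteness hypothesis, which compactness supplies for free; the density is of course determined only up to a $\mu$-null set, which is consistent with the convention adopted earlier that heat kernels are identified when they agree $\mu$-almost everywhere. I would emphasise that the proposition concerns only the disintegration at a \emph{fixed} pair $(x,t)$: the additional joint measurability in $(x,y)$ and the remaining heat-kernel axioms are not addressed here but are obtained separately (see the definition of a heat kernel and \Thm{markov_transition}), so that Radon--Nikod\'ym alone suffices for the statement at hand.
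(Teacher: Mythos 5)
Your argument is correct and is essentially the paper's own proof: the authors likewise note that compactness of $M$ makes the Radon measure $\mu$ $\sigma$-finite and then appeal directly to the Radon--Nikod\'ym theorem, with the density being the Radon--Nikod\'ym derivative. Your additional spelling-out of the trivial direction and of the finiteness of $p_t(x,\cdot)$ is fine but does not change the route.
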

\begin{proof}
  As $M$ is a compact metric space, $\mu$ is
  $\sigma$-finite. Hence, this is the exact statement of the
  Radon-Nikod\'ym theorem, the density function is the Radon-Nikod\'ym
  derivative.
\end{proof}

\begin{definition}
  Let $(\Omega, \mathcal{F}, \Proba_\Omega)$ be a probability space
  ($\Omega$ a set, $\mathcal{F}$ a $\sigma$-algebra and
  $\Proba_\Omega$ a probability measure on $\mathcal F$).  Let
  $(M,\mathcal B,\mu)$ be a measure space and let $I$ be a subset of
  $[0,\infty)$.  A family $\{X_t\}_{t\in I}$ of measurable maps
  $\map{X_t}{(\Omega, \mathcal{F})} {(M,\mathcal B)}$ is called a
  \emph{stochastic process} on $\Omega$ with values in $M$ and index
  set $I$.
\end{definition}

\begin{definition}
  For a stochastic process $X_t$, let
  \begin{align*}
    \mathcal{F}_t^0 := \sigma\bigl(X_s, s \in [0,t]\bigr),
  \end{align*}
  where $\sigma(\cdot)$ denotes the smallest $\sigma$-algebra
  containing all sets in the brackets and we use the convention that
  $X_s$ understood as a $\sigma$-algebra means the $\sigma$-algebra $
  X_s^{-1}(\mathcal{B})$.

\end{definition}
Note that $\mathcal{F}_s^0 \subseteq \mathcal{F}_t^0 \subseteq
\mathcal{F}$ for any $s <t$ by definition. Hence this is an increasing
sequence of $\sigma$-algebras.

\begin{definition}[\cite{StroockVaradhan79}]
  \label{def:cond-proba}
  Let $\mathcal{G} \subset \mathcal{F}$ by a sub-$\sigma$-algebra,
  then the \emph{conditional probability distribution} $Q_\omega$ of
  $\Proba_\Omega$ given $\mathcal{G}$ is a family of probability
  measures on $(\Omega, \mathcal{F})$ indexed by $\omega \in \Omega$
  such that
  \begin{itemize}
  \item for each $B \in \mathcal{F}$, the function $Q_\omega(B)$ is
    $\mathcal{G}$-measurable as a function of $\omega$
  \item for $A \in \mathcal{G}$ and $B \in \mathcal{F}$ we have 
    \begin{equation*}
      \Proba_\Omega(A \cap B) = \int_A Q_\omega(B) \dd \Proba_\Omega(\omega).
    \end{equation*}
  \end{itemize}
  A conditional probability distribution is unique in the sense that
  two choices agree $\Proba_\Omega$-almost surely. If $\Omega$ is a
  Polish space, a conditional probability distribution always exists.
\end{definition}

\begin{remark}
  If $\mathcal{G}=\mathcal{F}$, then $Q_\omega(B) = \chi_B(\omega)$
  that is $Q_\omega(B)$ is the indicator function of the set $B$. When
  $\mathcal{G}$ is a proper sub-$\sigma$-algebra, one can picture the
  $Q_\omega$ as $\mathcal{G}$-measurable approximations of the
  indicator function.
\end{remark}

\begin{remark}
  One writes $\Proba_\Omega( X_t \in U | \mathcal{F}_t^0)$ with $U \in
  \mathcal{B}$ as a shorthand for the map from $\Omega$ to $\R$
  that maps $\omega \mapsto Q_\omega(X_t \in U)$ where $Q_\omega$ is
  the conditional probability distribution of $\Proba_\Omega$ given
  $\mathcal{F}_t^0$.
\end{remark}

\begin{definition}
  Let $(M,d,\mu)$ be a metric measure space with Borel
  $\sigma$-algebra $\mathcal B(M)$.  A \emph{continuous normal Markov
    process} on the set of continuous paths consists of four elements
  \begin{enumerate}
  \item $\Omega:=\mathcal{P}(M)$ is the set of continuous paths $\map
    \omega {[0,\infty)} M$;
  \item $\mathcal{B}(\mathcal{P}(M))$ is the $\sigma$-algebra of Borel
    sets on it;
  \item $\{\Proba_x\}_{x \in M}$ are probability measures on
    $\mathcal{P}(M)$ that satisfy $\Proba_x\left( \omega(0)=x \right)
    =1$;
  \item $X_t(\omega):=\omega(t)$ is a stochastic process on
    $\mathcal{P}(M)$ with values in $M$
  \end{enumerate}
  satisfying the \emph{Markov property}
  \begin{align*}
    \Proba_x(X_{t+s} \in U | \mathcal{F}_t^0) = \Proba_{X_t}(X_s \in U)
    && \text{$\Proba_x$-almost surely}
  \end{align*}
  for all $U \in \mathcal{B}(M)$ and all $s,t \ge 0$.
\end{definition}

By convention one often refers to $X_t$ as the Markov process. The
existence of the other objects is then implicitly assumed.  
We will also write
$\Proba$ to denote the family of Borel probability measures
$\Proba_x$. We will refer to the measures $\Proba_x$ as Wiener
measures.

We have
\begin{align*}
  \mathbb{E}_x[f(X_t)]= \int_{\mathcal{P}_x(M)} f(\omega(t))
  d\mathbb{P}_x(\omega) = \int_M f(y) p_t(x,y) d\mu(y)
\end{align*}
for a continuous function $f$ on $M$.

\begin{definition}
  \label{def:strong-markov}
  A stochastic process satisfies the \emph{strong Markov property} if
  \begin{align*}
    \mathbb{P}_x(X_{\zeta+s} \in U | \mathcal{F}_{\zeta}) =
    \mathbb{P}_{X_{\zeta}}(X_s \in U)
  \end{align*}
  holds for all $s \ge 0$, $U \in \mathcal{B}(M)$ and all stopping
  times $\zeta$ (see \Def{stopping-time}). In this case the Markov
  process is called a \emph{continuous Hunt process} or a
  \emph{diffusion}.
\end{definition}
Note that the strong Markov property also implies
\begin{align*}
  \mathbb{E}^{\mathcal{F}_{\zeta}}[f(X_{s+\zeta})] =
  \mathbb{E}^{\zeta}[f(X_s)].
\end{align*}

\end{document}